\numberwithin{equation}{section}
\newtheorem{proposition}{Proposition}
\newtheorem{theorem}[proposition]{Theorem}
\newtheorem{lemma}[proposition]{Lemma}
\newtheorem{corollary}[proposition]{Corollary}
\theoremstyle{remark}
\newtheorem{remark}[proposition]{Remark}
\theoremstyle{definition}
\newtheorem{definition}[proposition]{Definition}
\numberwithin{equation}{section}
\numberwithin{proposition}{section}
\numberwithin{table}{section}
\renewcommand{\le}{\leqslant}
\renewcommand{\ge}{\geqslant}
\renewcommand{\leq}{\leqslant}
\renewcommand{\geq}{\geqslant}
\renewcommand{\subset}{\subseteq}
\newcommand{\D}{\mathcal{D}}
\newcommand{\E}{\mathbb{E}}
\renewcommand{\a}{\mathbf{a}}
\renewcommand{\d}{\mathbf{d}}
\newcommand{\ab}{{\overbracket[1pt][-1pt]{\a}}}
\newcommand{\Ll}{\left}
\newcommand{\Rr}{\right}
\newcommand{\rhs}{right-hand side}
\newcommand{\R}{\mathbb{R}}
\newcommand{\N}{\mathbb{N}}
\newcommand{\Zd}{{\mathbb{Z}^d}}
\renewcommand{\P}{\mathbb{P}}
\renewcommand{\tilde}{\widetilde}
\renewcommand{\epsilon}{\varepsilon}
\renewcommand{\d}{{\mathrm{d}}}
\newcommand{\cu}{{\scaleobj{1.2}{\square}}}
\newcommand{\p}{\mathfrak{p}}
\newcommand{\oO}{\mathcal{O}}
\newcommand{\B}{\bar{B}}
\newcommand{\T}{\mathcal{T}}
\newcommand{\var}{\mathbb{V}\!\mathrm{ar}}
\newcommand{\CFH}{\mathsf{CFH}} 
\newcommand{\CIH}{\mathsf{CIH}} 
\newcommand{\cltf}{\mathscr{C}_\infty}
\newcommand{\Ind}[1]{\mathbf{1}_{\left\{#1\right\}}}
\newcommand{\Lip}{1\text{-}\mathrm{Lip}}
\title[Strong approximation for random walks on percolation]{Coupling between Brownian motion and random walks on the infinite percolation cluster}
\author{Chenlin Gu, Zhonggen Su, Ruizhe Xu}
\address[Chenlin Gu]{Yau Mathematical Sciences Center, Tsinghua University, Beijing, China}
\email{gclmath@tsinghua.edu.cn}
\address[Zhonggen Su]{School of Mathematical Sciences, Zhejiang University, Hangzhou, China}
\email{suzhonggen@zju.edu.cn}
\address[Ruizhe Xu]{School of Mathematical Sciences, Zhejiang University, Hangzhou, China}
\email{12135028@zju.edu.cn}
\begin{document}

	\begin{abstract}
		For the supercritical  Bernoulli bond percolation  on $\mathbb{Z}^d$ ($d \geq 2$), we give a coupling between the random walk on the infinite cluster and its limit Brownian motion, such that the maximum distance between the paths during $[0,T]$ has a mean of order $T^{\frac{1}{3}+o(1)}$. The construction of the coupling utilizes the optimal transport tool. The analysis mainly relies on local CLT and the concentration of the cluster density.
		This partially answers an open question posed by Biskup [Probab. Surv., 8:294-373, 2011].  As a direct application, our result recovers the law of the iterated logarithm proved by Duminil-Copin [arXiv:0809.4380], and further identifies the limit constant.
	\end{abstract}
	
	\maketitle
	\section{Introduction}
	\subsection{Strong approximation}
	Suppose that $\{X_n\}_{n\ge 1}$ is a sequence of i.i.d. random variables with $\E [X_1]=0, \E [X_1^2]=1$ and set $S_n :=X_1+X_2+\dots+X_n$ for $n\ge 1$.
	The celebrated Donsker's invariance principle states that the normalized partial sum process $S_n$  converges weakly to standard Brownian motion.
	
	It is natural to further ask whether a stronger convergence result can be obtained. Specifically, we wonder how to construct a Brownian motion $(B_t)_{t\ge 0}$ and the random walk $(S_k)_{k \in \N}$ in a sufficiently large probability space, with the goal of minimizing the approximation rate given by
	\begin{equation*}
		\max_{1\le k\le n}\vert S_k-B_k\vert.
	\end{equation*}
	This problem is known as \emph{embedding} or \emph{strong approximation}. The exploration of this topic can be traced back to the foundational work of Skorokhod \cite{skorohod1965} and Strassen \cite{strassen}, who obtained an approximation rate of order $O\Ll(n^{1/4}(\log n)^{1/2}(\log\log n)^{1/4}\Rr)$ assuming a finite fourth moment. 
	Later, Koml\'os, Major and Tusn\'ady \cite{KMT1} developed a completely different technique, now known as \emph{the KMT coupling},  which establishes an approximation rate of order $o(n^{1/p})$ under the assumption of finite $p$-th moments; moreover, when finite exponential moments exist, an order of $O(\log n)$ can be attained.
	Results from \cite{bartfai1966bestimmung} (see also \cite{Zaitsev2002Estimates}) proved that the $O(\log n)$ rate is optimal unless the distribution of $X_1$ is already standard normal. 
	Furthermore, Zaitsev established the KMT coupling in $\R^d$ and provided a sharp estimate in \cite{zauitsev1996, zaitsev1998}.
	More recently, Chatterjee offered a new proof of the KMT result using \emph{Stein's method} in \cite{chatterjee12}.
	
	
	A comprehensive treatment of strong approximations in the i.i.d. case
	can be found in the monograph by Cs\"org\H o and R\'ev\'esz \cite{csorgo1981strong}. The theory has since been extended to encompass non-identically distributed variables \cite{sakhanenko1984rate, shao1995strong}, martingale differences \cite{monrad1991nearby, morrow1982invariance}, mixing sequences \cite{shao1993almost}, and random walk bridges \cite{dimitrov2021}. All of these results are powerful tools in probability and statistics. See Ob{\l}\'oj \cite{surveySkorokhod} for a nice survey, and see \cite{csorgo1981strong,csorgo1984KMT,shorack1986empirical,zhang1997strong} for a wide range of applications in diverse research fields.

	This paper aims to establish the first strong approximation result for random walks on the infinite percolation cluster.

	\subsection{Random walks on infinite percolation cluster}
	Percolation was  first introduced by  Broadbent and Hammersley \cite{broadbent1957percolation} as a model for the disorder medium, and has been extensively studied over the past decades to explain phase transitions  in statistical physics. For a historical overview and rigorous mathematical treatments of percolation, the reader is referred to Grimmett \cite{grimmett1999} and Kesten \cite{kesten1982percolation}.

	We focus on \emph{the  Bernoulli bond percolation} in this paper. Let $\mathbb{Z}^d$ be the $d$-dimensional Euclidean lattice.  For ${x, y \in \mathbb{Z}^{d}}$, if they are the nearest neighbors, we denote by $x \sim y$. The set $\mathcal{E}_{d}:=\left\{\{x, y\}: x, y \in \mathbb{Z}^{d}, x \sim y \right\}$ represents the set of the neighbor edges of $\mathbb{Z}^d$. For \(\p \in [0,1]\), the triplet \((\Omega, \mathcal{F}, \mathbb{P}_\p)\) stands for the probability space of the \(\mathbb{Z}^d\)-Bernoulli bond percolation: let \(\Omega = \{0,1\}^{\mathcal{E}_{d}}\) be the sample space, and let \(\omega := \{\omega(e)\}_{e \in \mathcal{E}_{d}} \in \Omega\) represent a configuration of percolation. Here, \(\omega(e) = 1\) indicates that the edge \(e\) is open, while \(\omega(e) = 0\) signifies that it is closed. The notation \(\mathcal{F}\) denotes the \(\sigma\)-algebra of subsets of \(\Omega\) generated by finite-dimensional cylinders, and we equip it with an i.i.d. Bernoulli measure of parameter \(\p\) such that
	\begin{align}\label{eq.defperco}
		\forall e \in \mathcal{E}_{d}, \qquad  \mathbb{P}_ \p(\omega(e) = 1) = 1 - \mathbb{P}_\p(\omega(e) = 0) =  \p.
	\end{align}
	For every $x, y \in \mathbb{Z}^{d}$, we denote by $x \stackrel{\omega}{\longleftrightarrow} y$ if there exists an open path connecting $x$ and $y$. Additionally, we write $x \stackrel{\omega}{\longleftrightarrow} \infty$ if $x$ belongs to an open path of infinite length. The connectivity probability is then defined as
	\begin{align}\label{eq.defconnectivity}
		\theta(\p) :=  \P_\p(x \stackrel{\omega}{\longleftrightarrow} \infty).
	\end{align}
	A connected component of open edges is called a \emph{cluster}. The infinite cluster, denoted by $\mathscr{C}_{\infty}(\omega)$ (with $\mathscr{C}_{\infty}$ being its shorthand), is defined as
	\begin{equation*}
		\mathscr{C}_{\infty}(\omega) :=\left\{x \in \mathbb{Z}^{d}: x \stackrel{\omega}{\longleftrightarrow} \infty\right\}.
	\end{equation*}
	For $d \geq 2$, Broadbent and Hammersley proved in \cite{broadbent1957percolation} the existence of the threshold $\p_c(d) \in (0,1)$ for the phase transition of connectivity. In the subcritical regime  $\p<\p_c(d)$, every vertex is almost surely in a finite open cluster. In the supercritical regime  $\p \in (\p_c(d), 1]$, we have $\theta(\p) > 0$. Aizenman, Kesten and Newman proved in \cite{aizenman1987uniqueness} almost surely there exists a unique infinite cluster $ \mathscr{C}_{\infty}(\omega)$; see also \cite{burton1989density}.
	
	Our interest is in the random walk in the supercritical regime ($d \geq 2$). Specifically, we study the \emph{variable-speed random walk} (VSRW): given a configuration $\omega$, let $(S_t)_{t\ge 0}$ start from $y \in \cltf(\omega)$, and every edge $e \in \mathcal{E}_{d}$ rings independently with an exponential clock of rate $\omega(e)$, then the random walk crosses the edge that rings first and contains the current position. VSRW is thus a Markov jump process associated with the generator
	\begin{align}\label{eq.VSRW}
		\mathcal{L}_{V} f(x):=\sum_{z: z \sim x} \omega (\{x, z\})(f(z)-f(x)).
	\end{align}

	In supercritical regime, the geometry of $\cltf$ is studied in \cite{penrose1996large, pisztora1996surface}, and is proved to be close to  $\Zd$ in large scale. 
	Therefore, their associated random walks should behave similarly as well. 
	The \emph{quenched} invariance principle for the discrete-time simple random walk on supercritical percolation was initially obtained by Sidoravicius and Sznitman in \cite{sidoravicius2004quenched} for $d \geq 4$, then generalized by Berger and Biskup in \cite{berger2007quenched}, as well as by Mathieu and Piatnitski in \cite{mathieu2007quenched} for all $d \geq 2$. Slightly adapted from their results, the following quenched invariance principle is stated for VSRW: for $\P_\p$-almost every configuration $\omega$, we have the weak convergence under the Skorokhod topology
	\begin{equation}\label{equ:QIP}
		(\epsilon S_{t/\epsilon^2})_{t \geq 0}  \stackrel[\epsilon \to 0]{d}{\longrightarrow} (\bar{\sigma}B_t)_{t \geq 0}.
	\end{equation}
	Here, $(B_t)_{t\ge 0}$ denotes the standard $d$-dimensional Brownian motion, and the diffusive constant $\bar{\sigma}>0$ is independent of configuration $\omega$. 
	
	The invariance principle also holds for \emph{the random conductance model}, which covers the percolation. This illustrates the robustness of Brownian universality; see \cite{ABDH,andres2015invariance,  BD,hambly2009parabolic, procaccia2016quenched, sapozhnikov2017random, chen2024quenched} for examples and the survey \cite{biskup2011recent} by Biskup.
	
	In the proof of the results above, one crucial step is reducing the random walk to a martingale  via \emph{the corrector method}. However, to establish the strong approximation for this martingale requires a detailed analysis of the quadratic variation, which is highly nontrivial. In \cite[Page~346, Section~4.4]{biskup2011recent}, Biskup highlighted this issue: \textit{``An important open problem concerns the rate of convergence and quantification of errors in martingale approximations.''} Therefore, the strong approximation for random walks on percolation was missing in the literature for long time.

	The random walk on random conductance is connected to homogenization theory, which offers many valuable insights, including the corrector method mentioned earlier. This connection has been recognized since the work of Kozlov \cite{kozlov1985}, Papanicolaou, and Varadhan \cite{varadhan1982}; see also the discussion in \cite[Section~3.2, Section~6.1]{biskup2011recent}. However, the quantification of stochastic homogenization was challenging, and not resolved until the emergence of the recent progress: see \cite{armstrong2016quantitative, armstrong2016lipschitz, armstrong2016mesoscopic, armstrong2017additive, NS, vardecay, gloria2011optimal, gloria2012optimal, gloria2015quantification, GO3} and the monograph \cite{AKMbook, armstrong2022elliptic}. Subsequently, quantitative homogenization theory was applied to supercritical percolation in \cite{armstrong2022elliptic, dario2021corrector, dario2021quantitative, gu2022efficient, bou2023rigidity}, resulting in a suite of analytical tools now available, including the Liouville theorem, optimal corrector growth, local CLT, and rigidity of harmonic functions, among others.
	
	Thanks to all these developments, we are ready to present the first strong approximation for the random walk on supercritical percolation. The novelty of our approach lies in integrating homogenization techniques into an optimal transport framework, enabling us to construct a quantitative coupling through a relatively elementary approach. Such a strong approximation is also regarded as a key tool for further exploration of related topics, including mixing time and the law of the iterated logarithm; see \cite[Section~3.1]{benjamini2003} and the discussion below \cite[Theorem~1.2]{kumagai2016}, respectively.
	
	\subsection{Main results}
	Our major contribution in this paper is constructing a coupling between VSRW in \eqref{eq.VSRW} and its limit Brownian motion in \eqref{equ:QIP}, with an upper bound for the maximum difference. For simplicity, we will use shorthand $\bar{B}_t:=\bar{\sigma}B_t$ in the rest of the paper. To state the result, we need to introduce an $\omega$-measurable variable defined for every $\delta > 0$ and $y \in \Zd$ (see \eqref{eq.defTy} for its detailed definition) 
	\begin{align}\label{eq.defT}
		\T_\delta(y) : \Omega \to [0,\infty) , 
	\end{align}
	which represents a minimal scale for ``a good local configuration" to ensure the validity of the statement. It satisfies the stretched exponential tail estimate
	\begin{align}\label{eq.TailT}
		\forall T>0,\quad \P_\p\left(\mathcal{T}_\delta(y)\geq T\right)\le C\exp\left(-\frac{T^s}{C}\right),
	\end{align}
	where $C,s$ are all finite positive constants depending on $\mathfrak{p},d,\delta$. The notation $\vert \cdot\vert_2$ denotes the Euclidean distance in $\mathbb{R}^d$ in the statement.
	
	\begin{theorem}\label{The:main_result}
		Fix  $d\ge 2$ and $\p \in (\p_c(d), 1]$. For almost every configuration $\omega\in \Omega$ and $y\in\cltf(\omega)$, we can construct a version of VSRW $(S_t)_{t \geq 0}$
		and Brownian motion $(\B_t)_{t \geq 0}$ with diffusive constant $\bar{\sigma}$ in the same probability space, both starting from $y$, such that for every $\delta >0$ and for all $T > \T_\delta(y)$, we have
		\begin{align}\label{eq.main}
			\E^\omega\Big[\sup_{t\in [0,T]} \vert S_t - \B_t \vert_2 \Big] \leq K T^{\frac{1}{3}+\delta},
		\end{align}
		where $K$ is a finite positive constant depending on $d, \p,\delta$.
	\end{theorem}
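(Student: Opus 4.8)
\emph{Strategy.} The plan is to sidestep the delicate analysis of the quadratic variation of the martingale approximation and instead build the coupling directly, block by block on a single time scale, re-synchronising the VSRW and the Brownian motion at the block endpoints by an optimal-transport step whose cost is controlled by the quantitative local CLT and the concentration of the cluster density. Fix $\omega$ in the full-measure event carrying the quantitative homogenisation estimates, fix $y\in\cltf(\omega)$, and let $T>\T_\delta(y)$, where $\T_\delta(y)$ is taken large enough that every quantitative input below applies at the relevant scales; this is consistent with the tail \eqref{eq.TailT} since each such input has a stretched-exponential tail. Two soft reductions come first. By the Gaussian (Barlow) heat-kernel upper bound the VSRW started at $y$ stays inside $B_R(y)$, with $R:=C\sqrt{T\log T}$, throughout $[0,T]$ off a stretched-exponentially small event, so only the environment in $B_R(y)$ is relevant; and by the same bound $\E^\omega[\sup_{s\le\ell}|S_s-S_0|]\le C\sqrt{\ell}$ for the scales used below (with the analogous classical estimate for Brownian bridges), while the sublinearity of the corrector $\chi$ gives oscillation bounds of size $\le\ell^{\delta}$ over the balls of radius $\sqrt{\ell\log\ell}$ that will occur.

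Now fix a block length $\ell$ (optimised at the end), set $t_k:=k\ell$ and $N:=\lceil T/\ell\rceil$, and build $(S,\B)$ inductively on the blocks $[t_k,t_{k+1}]$ using fresh independent randomness at each step. Given the paths up to $t_k$ with $S_{t_k}=x\in B_R(y)$, let $\mu^\omega_{\ell,x}$ be the quenched increment law $z\mapsto p^\omega_\ell(x,x+z)$ and $\nu_\ell:=\mathcal N(0,\bar\sigma^2\ell\,I)$. Take a coupling $\pi_k$ of $(\mu^\omega_{\ell,x},\nu_\ell)$ that is optimal for the cost $|z-z'|$, sample $(S_{t_{k+1}}-x,\,G_k)\sim\pi_k$, put $\B_{t_{k+1}}:=\B_{t_k}+G_k$, and fill in $S$ and $\B$ on the open block by the corresponding (independent) VSRW bridge and Brownian bridge. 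Since the marginal of $G_k$ is always $\nu_\ell$, this defines a genuine coupling of a VSRW and a driftless Brownian motion with diffusivity $\bar\sigma$, both from $y$, and the fresh discrepancy introduced on block $k$ has conditional $\E^\omega$-mean, given the past up to $t_k$, at most $W_1(\mu^\omega_{\ell,x},\nu_\ell)$.

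\emph{The key per-block estimate.} The technical heart is to prove that, uniformly over $x\in B_R(y)$ and $\ell$ in the relevant range,
\[
W_1\big(\mu^\omega_{\ell,x},\,\nu_\ell\big)\ \le\ C\,\ell^{\delta}.
\]
I would establish this by composing transport maps. First, the quantitative local CLT gives $p^\omega_\ell(x,x+z)=\theta(\p)^{-1}\bar p_\ell(z)\big(1+O(\ell^{-1/2+\delta})\big)$ for $|z|\le C\sqrt{\ell\log\ell}$, with Gaussian decay of $p^\omega_\ell$ beyond (the corrector correction in the exponent being itself $O(\ell^{-1/2+\delta})$ by the oscillation bound), so $\mu^\omega_{\ell,x}$ moves, at $W_1$-cost at most (relative error)$\times\sqrt\ell=O(\ell^{\delta})$, onto the measure proportional to $z\mapsto\bar p_\ell(z)\,\mathbf 1\{x+z\in\cltf\}$. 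Second, the concentration of the cluster density at scale $\sqrt\ell$ controls both the normalisation of this measure and its local averages against $\bar p_\ell$, so it lies within $W_1$-distance $C\ell^{\delta}$ (again a density fluctuation times $\sqrt\ell$) of the lattice restriction of the continuum Gaussian $\bar p_\ell$. Third, discretising a continuum Gaussian onto $\Zd$, and re-centring by the $O(\ell^{\delta})$ corrector drift of the VSRW increment, cost $O(\ell^{\delta})$. Summing the three contributions and using that the homogenisation and density rates at scale $\sqrt\ell$ beat any fixed power yields the bound, with $\delta$ absorbing all logarithms. It should be noted that this cannot be pushed below $\Omega(1)$: as $\mu^\omega_{\ell,x}$ lives on $\Zd$ while $\nu_\ell$ is spread diffusely over scale $\sqrt\ell$, one has $W_1(\mu^\omega_{\ell,x},\nu_\ell)\ge\int\mathrm{dist}(z,\Zd)\,\bar p_\ell(z)\,\mathrm{d}z\ge c>0$, and this irreducible per-block cost is precisely what will pin the final exponent.

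\emph{Accumulation, choice of scale, and the main obstacle.} Summing block errors by the triangle inequality and taking quenched expectations, $\E^\omega[\max_{k\le N}|S_{t_k}-\B_{t_k}|]\le\sum_{k<N}\sup_{x\in B_R(y)}W_1(\mu^\omega_{\ell,x},\nu_\ell)\le C\,T\,\ell^{-1+\delta}$; and inside a block, $|S_t-\B_t|\le|S_t-S_{t_k}|+|S_{t_k}-\B_{t_k}|+|\B_t-\B_{t_k}|$ with the first and third terms contributing at most $C\sqrt{\ell\log T}$ in expectation after taking the maximum over the $N$ blocks. Hence
\[
\E^\omega\Big[\sup_{t\in[0,T]}|S_t-\B_t|\Big]\ \le\ C\,T\,\ell^{-1+\delta}\ +\ C\sqrt{\ell\log T},
\]
which is balanced by taking $\ell\asymp T^{2/3}$, giving $\le K\,T^{1/3+\delta}$ after relabelling $\delta$. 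A single coupling valid simultaneously for all $T>\T_\delta(y)$ is then obtained by running the construction with geometrically growing blocks — of length of order $2^{2m/3}$ on the window $[2^m,2^{m+1}]$ — so that the error over $[0,T]$ telescopes. The hard part is the per-block estimate: one needs the quantitative local CLT with relative error $\ell^{-1/2+o(1)}$ \emph{uniformly in the base point} $x\in B_R(y)$, combined cleanly with the cluster-density concentration, so that the transport cost to the Gaussian is $\ell^{o(1)}$ rather than a positive power of $\ell$; this is where the homogenisation machinery and its stretched-exponential stochastic integrability — packaged into $\T_\delta(y)$ — are really used. The resulting exponent $\tfrac13$ is presumably not optimal: it is forced by the $\Omega(1)$ cost of re-synchronising at a single fixed scale, and beating it would require a genuinely multiscale construction or quantitative control of the VSRW bridge laws, which I do not pursue here.
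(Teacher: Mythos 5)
Your proposal follows essentially the same route as the paper: block the time interval into $N\asymp T^{1/3}$ pieces of length $\ell\asymp T^{2/3}$, couple each increment to a Gaussian by a $W_1$-optimal transport plan, bound the per-block transport cost by $O(\ell^{\delta})$ using the quantitative local CLT plus the concentration of the cluster density (the paper organizes this step via Kantorovich duality rather than your explicit composition of transport maps, but the inputs and the $\ell^{-1/2+\delta}\cdot\sqrt{\ell}$ accounting are the same), uniformize the marginal bound over $x\in B_R(y)$ by a maximal minimal-scale with stretched-exponential tail, and pass to infinite horizon by geometrically growing blocks. The balance $T\ell^{-1+\delta}+\sqrt{\ell}\,(\log T)^{O(1)}$ and the resulting $T^{1/3+\delta}$ exponent match the paper's.
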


	We emphasize that both the coupling $(S_t, \B_t)_{t \geq 0}$ and the collection of minimal scales $\{\T_\delta(y)\}_{y \in \Zd, \delta > 0}$ depend only on the sample of the percolation $\omega$, but they do not depend on each other. For different $\delta > 0$ and starting point $y \in \cltf(\omega)$, the random walk requires an exploration longer than $\T_\delta(y)$ to achieve the estimate \eqref{eq.main}. Since $\delta$ can be made arbitrarily small, the rate can be considered as $T^{\frac{1}{3} + o(1)}$ as $T \to + \infty$. 
	
	\bigskip
	
	This result provides a strong tool to analyze the of path property of VSRW. For example, 
	Theorem~\ref{The:main_result} allows us to recover the invariance principle \eqref{equ:QIP}.  
	Actually, we can prove Theorem~\ref{The:main_result} without any knowledge about the invariance principle. In particular, there is an alternative characterization of the diffusive constant from \cite{armstrong2018elliptic} and \cite{dario2021quantitative}. The homogenized matrix 
	$\ab$ is defined in \cite[Definition~5.1]{armstrong2018elliptic} using a variational formula. It is a scalar matrix, given by
	\begin{align}\label{eq.defa}
		\ab= C^*\mathrm{Id},
	\end{align}
	where $C^*$ is a positive constant. See Lemma~\ref{lem.scalar} for a self-contained proof. Subsequently, \cite[eq.(181)]{dario2021quantitative} defines the diffusive constant 
	$\tilde{\sigma}^2$ as follows:
	\begin{align}\label{eq.defSigma2} 
		\tilde{\sigma}^2 := 2 \theta(\p)^{-1}C^*,
	\end{align}
	where $C^*$ is given in \eqref{eq.defa}.
	See also the discussion \cite[Remark~7, Remark~19]{dario2021quantitative}. All the elements in the proof Theorem~\ref{The:main_result} are based on the definition \eqref{eq.defSigma2}. Therefore, we actually establish a strong approximation between $(S_t)_{t\ge 0}$ and $(\tilde{\sigma} B_t)_{t \geq 0}$, leading to the invariance principle 
	\begin{equation}
		(\epsilon S_{t/\epsilon^2})_{t \geq 0} \stackrel[\epsilon \to 0]{d}{\longrightarrow} (\tilde{\sigma}B_t)_{t \geq 0}.
	\end{equation}
	Consequently, the definition $\tilde{\sigma}$ in \eqref{eq.defSigma2} coincides with $\bar{\sigma}$ in 
	\eqref{equ:QIP} using the uniqueness of weak convergence limits, i.e., $\tilde{\sigma} = \bar{\sigma}$. 
	
	Another interesting application of Theorem~\ref{The:main_result} is the law of the iterated logarithm (LIL) of random walks on percolation cluster.
	In this direction, Duminil-Copin established the  LIL in \cite{duminil2008law} for \emph{constant-speed random walk} (CSRW), which is a continuous time Markov jump process with    generator
	\begin{align}\label{eq.CSRW}
		\mathcal{L}_{C} f(x):=\omega_x^{-1}\sum_{z: z \sim x} \omega (\{x, z\})(f(z)-f(x)).
	\end{align}
	Here $\omega_x:=\sum_{z:z\sim x}\omega\left(\{x,z\}\right)$ is the degree of vertex $x$. The proof in \cite{duminil2008law} utilized the heat kernel estimate by Barlow \cite{barlow2004random} along with the ergodic theory. A similar argument was later implemented by Kumagai and Nakamura in \cite{kumagai2016}, where the LIL was generalized to a family of random walks on random conductance models satisfying heat kernel estimates.
	
	However, in all these work, the limit constants in LILs are not stated explicitly because the arguments rely on the zero-one law. 
	Kumagai and Nakamura, in discussion with Biskup, have pointed out another natural approach to derive the LIL using couplings, as mentioned in the paragraph around \cite[Theorem~1.2]{kumagai2016}: \textit{``... if the random walk can be embedded into Brownian motion in some strong sense..., then (1.6),(1.7) can be shown as a consequence ([8]). It would be very interesting to prove such a strong approximation theorem.''} Theorem~\ref{The:main_result} now realizes this idea with a more direct proof and identifies the limit constant. 
	
	It is important to note that the limit constant in the LIL depends not only on the type of random walk (VSRW or CSRW), but also on the choice of the vector $\ell_p$-norm $\vert \cdot \vert_p$ (see \eqref{eq.lp}). More precisely, we have Corollary \ref{Cor:LIL_constant}.
	
	\begin{corollary}\label{Cor:LIL_constant}
		Fix  $d\ge 2$ and $\p \in (\p_c(d), 1]$. For every $p \in [1,\infty)$, for almost every configuration $\omega\in\Omega$, and for all $y\in\mathscr{C}_\infty(\omega)$, the VSRW $(S_t)_{t\ge 0}$ starting from $y$ satisfies the LIL
		\begin{align}\label{eq.LIL_VSRW}
			P^\omega_y\Big(\limsup_{t\to\infty}\frac{|S_t|_p}{\sqrt{2t\log\log t}}= \gamma_{d,p} \bar{\sigma} \Big)=1,
		\end{align}
		and the CSRW $(\tilde{S}_t)_{t\ge 0}$ starting from $y$ satisfies the LIL
		\begin{align}\label{eq.LIL_CSRW}
			P^\omega_y\Big(\limsup_{t\to\infty}\frac{|\tilde{S_t}|_p}{\sqrt{2t\log\log t}}=\alpha_{d,\p} \gamma_{d,p} \bar{\sigma}\Big)=1.
		\end{align}
		Here, $\bar{\sigma}$ is the diffusive constant in \eqref{equ:QIP}, and constants $\gamma_{d,p}, \alpha_{d,\p}$ are defined as
		\begin{align}\label{eq.defLIL_para}
			\gamma_{d,p} := \max\Ll\{d^{\frac{1}{p}-\frac{1}{2}}, 1\Rr\}, \qquad \alpha_{d,\p} := \big(2d\E_\p[\omega\left(\{0,x\}\right)|0\in\cltf]\big)^{-\frac{1}{2}},
		\end{align}
		where $x$ is a neighboring vertex of $0$.
	\end{corollary}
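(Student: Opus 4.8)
\textbf{Proof plan for Corollary~\ref{Cor:LIL_constant}.}
The strategy is to transfer the classical law of the iterated logarithm for $d$-dimensional Brownian motion to the random walk $(S_t)_{t\ge 0}$ by means of the coupling of Theorem~\ref{The:main_result}, and then to handle the CSRW by a time-change argument. First I would recall the LIL for the rescaled Brownian motion $(\B_t)_{t\ge 0}=(\bar\sigma B_t)_{t\ge 0}$: for each fixed $p\in[1,\infty)$, almost surely
\begin{align*}
	\limsup_{t\to\infty}\frac{|\B_t|_p}{\sqrt{2t\log\log t}}=\bar\sigma\,\sup_{|v|_2=1}|v|_p=\bar\sigma\,\gamma_{d,p},
\end{align*}
where $\gamma_{d,p}=\max\{d^{1/p-1/2},1\}$ is exactly the operator norm of the identity from $\ell_2^d$ to $\ell_p^d$ (attained at a coordinate vector when $p\ge 2$ and at $d^{-1/2}(1,\dots,1)$ when $p\le 2$). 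This is a standard consequence of the one-dimensional LIL applied coordinatewise together with the Strassen functional LIL, which identifies the cluster set of $t\mapsto \B_t/\sqrt{2t\log\log t}$ with the closed unit ball of the Cameron--Martin space; the $\ell_p$-diameter of that ball in the relevant direction gives $\bar\sigma\gamma_{d,p}$.

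The main step is then the comparison. Work on the coupling probability space from Theorem~\ref{The:main_result}, and fix $\delta\in(0,1/6)$ so that $\frac13+\delta<\frac12$. By \eqref{eq.main} we have $\E^\omega[\sup_{t\in[0,T]}|S_t-\B_t|_2]\le K T^{1/3+\delta}$ for all $T>\T_\delta(y)$. Applying this along the geometric sequence $T_n=2^n$ and using Markov's inequality, $\P^\omega\big(\sup_{t\in[0,2^n]}|S_t-\B_t|_2\ge 2^{n(1/3+2\delta)}\big)\le K\,2^{-n\delta}$, which is not summable; to fix this I would instead apply Markov's inequality at level $\eps\sqrt{2\cdot 2^n\log\log 2^n}$ and use that $2^{n(1/3+\delta)}/\sqrt{2^n}=2^{-n(1/6-\delta)}\to 0$, so that for every $\eps>0$ the probabilities $\P^\omega\big(\sup_{t\in[0,2^n]}|S_t-\B_t|_2\ge \eps\sqrt{2\cdot2^n\log\log 2^n}\big)$ are summable in $n$ (the bound is $\lesssim \eps^{-1} 2^{-n(1/6-\delta)}(\log\log 2^n)^{-1/2}$). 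Borel--Cantelli then gives, $P^\omega_y$-a.s.,
\begin{align*}
	\sup_{t\in[0,2^n]}\frac{|S_t-\B_t|_2}{\sqrt{2\cdot 2^n\log\log 2^n}}\xrightarrow[n\to\infty]{}0 .
\end{align*}
A routine monotonicity interpolation between $2^n$ and $2^{n+1}$ (the normalizing sequence changes only by a bounded factor) upgrades this to $\sup_{t\le s}|S_t-\B_t|_2=o(\sqrt{s\log\log s})$ as $s\to\infty$, hence $|S_t-\B_t|_2=o(\sqrt{t\log\log t})$. Since $|\cdot|_p\le |\cdot|_2$ for $p\ge 2$ and $|\cdot|_p\le d^{1/p-1/2}|\cdot|_2$ for $p\le 2$, in all cases $|S_t-\B_t|_p=o(\sqrt{t\log\log t})$, and the triangle inequality for $|\cdot|_p$ together with the Brownian LIL above yields \eqref{eq.LIL_VSRW}.

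For the CSRW, I would use the classical fact that the CSRW $(\tilde S_t)_{t\ge0}$ is a time change of the VSRW: $\tilde S_t=S_{A_t^{-1}}$ where $A_t=\int_0^t \omega_{S_u}\,du$ is the additive functional accumulating the local degrees along the VSRW path, so that $\tilde S_t$ and $S_t$ visit the same sequence of vertices but the CSRW runs on the clock $A$. By the ergodic theorem along the path of the VSRW (the environment viewed from the particle is ergodic for the reversible measure proportional to $\omega_x\mathbf 1_{x\in\cltf}$, as used already in \cite{duminil2008law,kumagai2016}), $A_t/t\to \bar\omega:=\E_\p[\omega_x\mid 0\in\cltf]=2d\,\E_\p[\omega(\{0,x\})\mid 0\in\cltf]$ almost surely, equivalently $A_t^{-1}/t\to \bar\omega^{-1}$. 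Substituting $s=A_t^{-1}$ into the VSRW LIL \eqref{eq.LIL_VSRW}, and using that $\log\log(\bar\omega^{-1}t)\sim\log\log t$,
\begin{align*}
	\limsup_{t\to\infty}\frac{|\tilde S_t|_p}{\sqrt{2t\log\log t}}
	=\limsup_{t\to\infty}\frac{|S_{A_t^{-1}}|_p}{\sqrt{2A_t^{-1}\log\log A_t^{-1}}}\cdot\sqrt{\frac{A_t^{-1}}{t}}
	=\gamma_{d,p}\bar\sigma\cdot\bar\omega^{-1/2}=\alpha_{d,\p}\,\gamma_{d,p}\,\bar\sigma,
\end{align*}
which is \eqref{eq.LIL_CSRW}. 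The main obstacle I anticipate is purely technical: justifying the interpolation/Borel--Cantelli step cleanly (one must be slightly careful that $\E^\omega[\sup_{t\le T}|S_t-\B_t|_2]$ controls the sup over dyadic blocks uniformly once $T>\T_\delta(y)$, which is immediate from \eqref{eq.main}) and confirming that $\gamma_{d,p}$ is exactly the $\ell_2\to\ell_p$ operator norm so that the Brownian LIL constant matches the claimed value; the CSRW time-change identity and the ergodic averaging of $A_t/t$ are standard and can be cited.
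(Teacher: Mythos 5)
Your proposal follows essentially the same route as the paper: transfer the Kuelbs/Strassen-type Brownian LIL with constant $\gamma_{d,p}=\sup_{|v|_2=1}|v|_p$ via the coupling of Theorem~\ref{The:main_result}, use Markov plus Borel--Cantelli along a geometric time sequence to show $|S_t-\B_t|_p=o(\sqrt{t\log\log t})$, and then obtain the CSRW statement by the time change $A_t=\int_0^t\omega_{S_u}\,du$ together with the ergodic limit $A_t/t\to 2d\,\E_\p[\omega(\{0,x\})\mid 0\in\cltf]$ (the paper uses the triadic blocks $T_k=\tfrac12(3^k-1)$ instead of $2^n$, which is inessential). One small slip: your intermediate claim that $\sum_n 2^{-n\delta}$ is "not summable" is false (it is a convergent geometric series for any $\delta>0$), but since you immediately switch to the Markov bound at level $\eps\sqrt{\phi(T_n)}$ — which is exactly what the paper does — this slip has no bearing on the correctness of the argument.
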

	
	\begin{remark}
		Duminil-Copin studied the result for CSRW under $\vert \cdot \vert_1$ in \cite{duminil2008law}. According to Corollary~\ref{Cor:LIL_constant}, we have $\gamma_{d,1} = d^{\frac{1}{2}}$ which cancels out $d^{-\frac{1}{2}}$ in $\alpha_{d,\p}$. Notice the minor difference in the the scaling factor of \cite[Theorem 1.1]{duminil2008law}, so the limit constant $c(d,\p)$ there should be 
		\begin{align*}
			c(d,\p) = \sqrt{2}\alpha_{d,\p} \gamma_{d,1} \bar{\sigma} = \big(\E_\p[\omega(\{0,x\})|0\in\cltf]\big)^{-\frac{1}{2}} \bar{\sigma}.
		\end{align*}
	\end{remark}

	\subsection{Ingredients of the coupling}\label{subsec.ingredient}
	In this part, we outline the construction of the coupling in three steps. The analysis of the approximation is also discussed without delving into technical details.
	
	\textit{\textbf{Step~1: coupling of marginal distributions.}} The coupling between the marginal distributions of $S_t$ and $\bar{B}_t$ serves as a cornerstone of our analysis. The weak invariance principle indicates that they should be close for large $t$.

	The main challenge is that the supports of the marginal distributions are mutually singular, thus we need to take the geometry of $\cltf$ into consideration. Our solution turns out to be the $1$-Wasserstein distance, denoted by $W_1(\cdot, \cdot)$. This distance is robust for mutually singular measures, and it naturally yields a coupling that achieves $W_1(S_t, \B_t)$; see \cite[Theorem~4.1]{Villani2009Optimal}.
	
	The $1$-Wasserstein distance has a nice characterization via  \emph{the Kantorovich duality theorem}, which provides a tool to calculate $W_1(S_t, \B_t)$
	\begin{align}\label{eq.dualSB}
		W_1(S_t, \B_t) =
		\sup_{f\in \Lip(\mathbb{R}^d)}\Bigg\vert \sum_{x\in \mathscr{C}_\infty}f(x)p^\omega(t,x,y)-\int_{\mathbb{R}^d}f(x)\bar{p}(t,x - y)\, \d x\Bigg\vert.
	\end{align}
	Here, $p^\omega(t,\cdot,y)$ is the transition probability for $S_t$ starting from $y \in \cltf$, and $\bar{p}(t, \cdot)$ is the density for Brownian motion $\B_t$. Two technical inputs to further analyze \eqref{eq.dualSB} are local CLT estimate in \cite[Theorem~2.1]{dario2021quantitative} and concentration inequality of cluster density in \cite[Proposition 14]{dario2021quantitative}.  In combination, we obtain a coupling $(S_t,\B_t)$ such that, for every arbitrarily small $\delta > 0$,
	\begin{equation}\label{eq.W1marginal}
		\E^\omega\Ll[ \vert S_t-\B_t\vert_2 \Rr] = W_1(S_t, \B_t) \le Ct^\delta,
	\end{equation}
	where $t$ is larger than an $\omega$-measurable random variable (see \eqref{equ:random_time} below).

	\textit{\textbf{Step~2: coupling of the process in finite horizon.}} In this step, we implement the marginal coupling to some discrete time points first, then further extend it to the entire interval [0,T]. See Figure \ref{pic.coupling} for an illustration.
	\begin{itemize}[label=---]
		\item  Divide the interval $[0,T]$   into $n$ segments of length $\Delta T$, and set
		\begin{align*}
			\Delta T := T/n, \quad t_k := k \Delta T,  \quad I_k = (t_k, t_{k+1}),	\qquad k\in\{0,1,\dots, n-1\}.
		\end{align*}
		\item   For every $k = 0, \cdots, n-1$ and conditioned on the position of $S_{t_k}$,  we sample the increment $(\Delta S_{t_k}, \Delta \B_{t_k})$ as a coupling of $\big(p^\omega(\Delta T,S_{t_k} +\cdot,S_{t_k}), \bar{p}(\Delta T,\cdot) \big)$ following Step~1. By gluing them together, 
		\begin{align*}
			(S_{t_{k+1}}, \B_{t_{k+1}}) := (S_{t_k}, \B_{t_{k}}) + (\Delta S_{t_k}, \Delta \B_{t_{k}}),
		\end{align*}
		the coupled process $(S_{t_k}, \B_{t_k})_{0 \leq k \leq n}$ is obtained.
		\item   Sample the Markov process for $(S_t)_{t \in [0,T]}$ following the law of VSRW conditioned on $(S_{t_k})_{0\le k\le n}$, and sample $(\B_t)_{t \in [0,T]}$ as the Brownian motion of diffusive constant $\bar{\sigma}^2$  conditioned on $(\B_{t_k})_{0\le k\le n}$. 
	\end{itemize}
	Heuristically speaking,    every coupling of increment $(\Delta S_{t_k}, \Delta \B_{t_k})$ will accumulate an error of order $(\Delta T)^\delta$  as claimed in \eqref{eq.W1marginal}. The fluctuation over each $I_k$ is of order $\sqrt{\Delta T}$ with a sub-exponential tail, thus the maximum fluctuation of the total $n$ intervals is of order $(\log n) \sqrt{\Delta T}$. Therefore, we get an estimate
	\begin{align*} 
		\E^\omega \Big[\sup_{t\in[0,T]}\vert S_t-\bar{B}_t\vert_2\Big] \leq K\Big(n (\Delta T)^\delta +  (\log n) \sqrt{\Delta T}\Big).
	\end{align*}
	With the optimization of parameter $\Delta T = T/n$, we choose $n = \lfloor T^{\frac{1}{3}} \rfloor$  to obtain an approximation of order $T^{\frac{1}{3}+\delta}$.
	
	\begin{figure}[t]
		\centering
		\includegraphics[height=120pt]{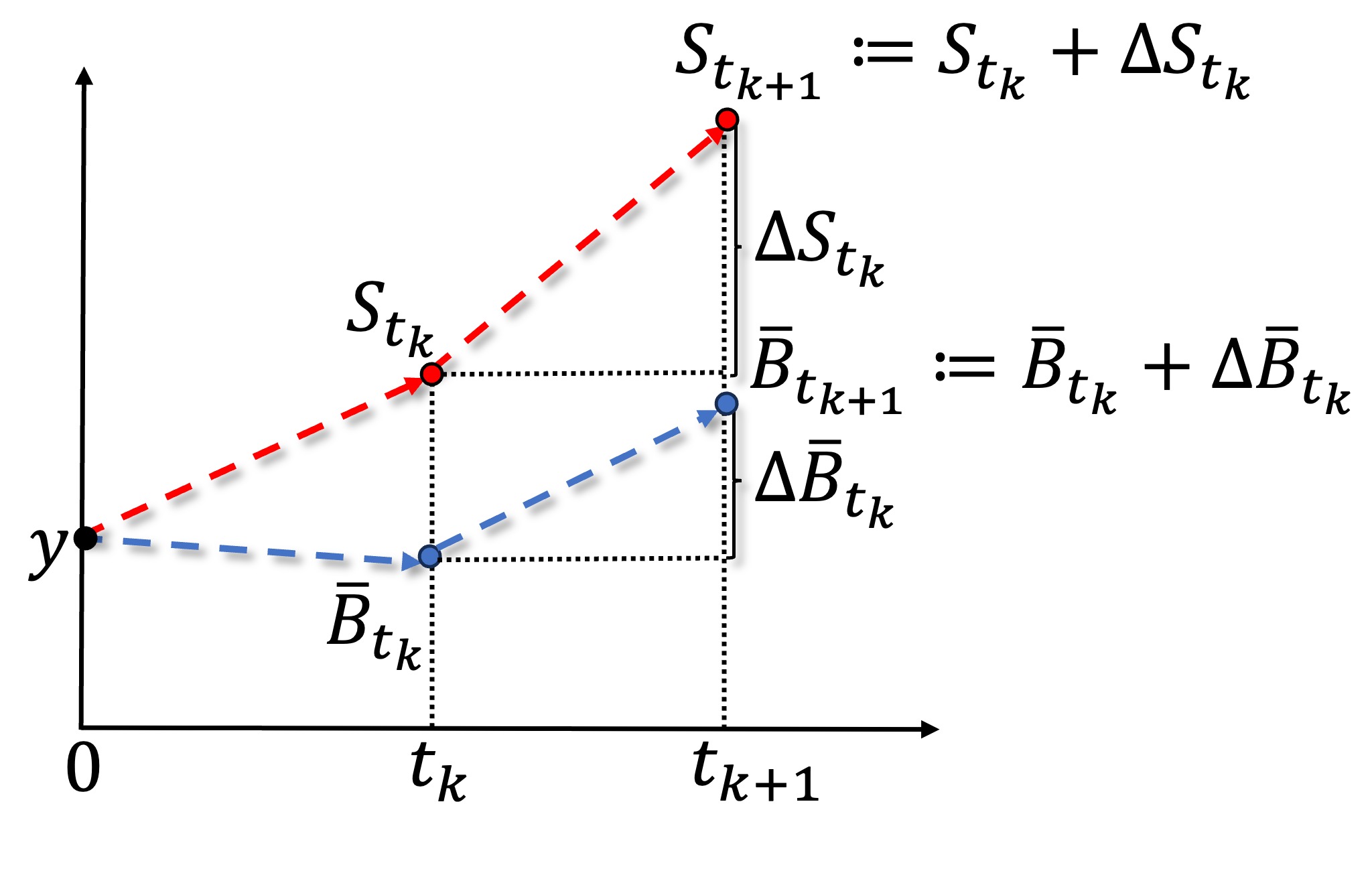}
		\includegraphics[height=120pt]{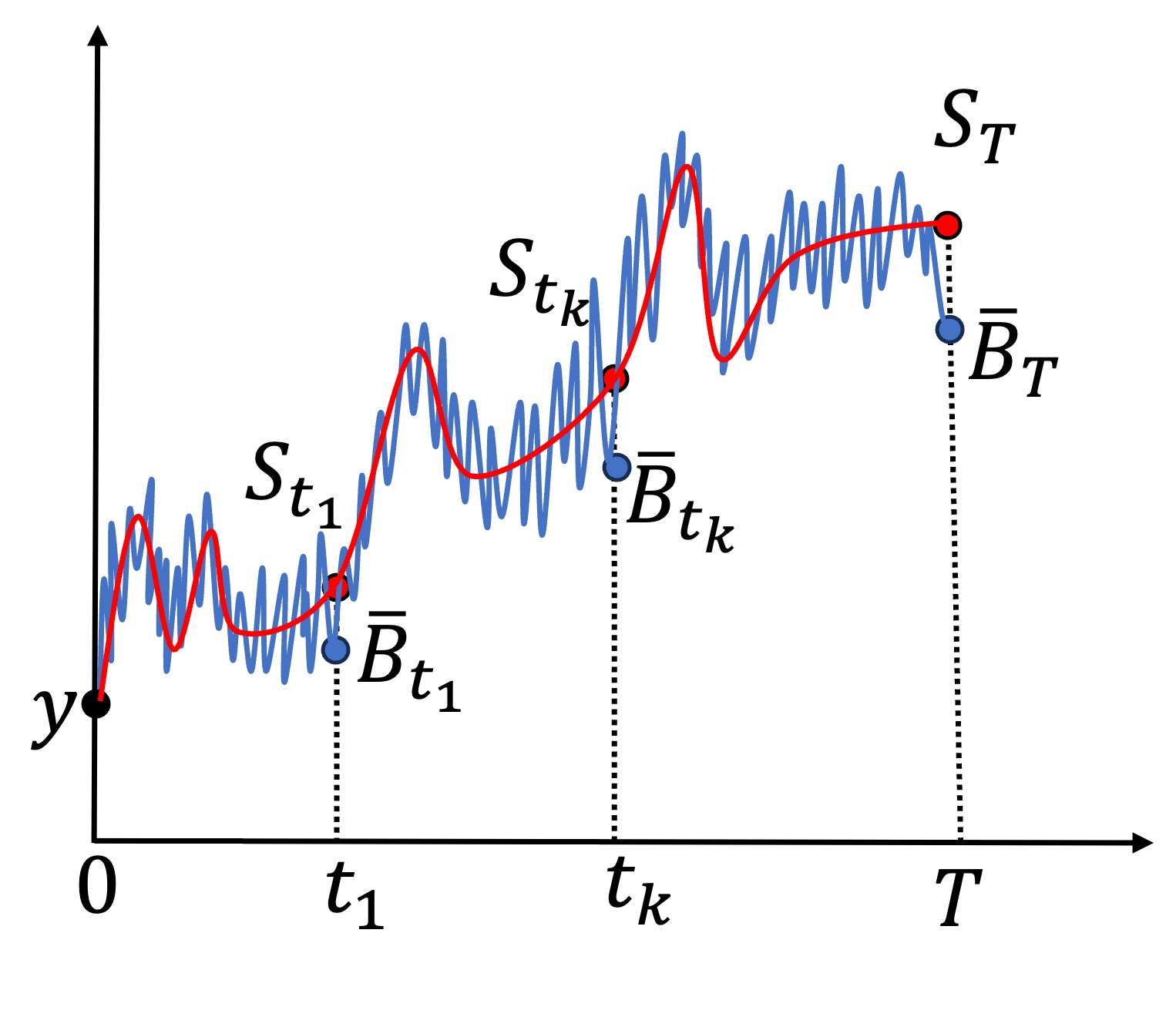}
		\caption{the construction of the coupling $(S_t, \B_t)_{t \in [0,T]}$.}\label{pic.coupling}
	\end{figure}
	
	\textit{\textbf{Step~3: coupling of the process in infinite horizon.}} We make a triadic decomposition of time $\R_+$ with endpoints $(T_k)_{k \in \N_+}$, and implement the coupling in Step~2 over each interval $[T_k, T_{k+1}]$. This gives a coupling over the entire positive real line that achieves the desired order in \eqref{eq.main}.

	\subsection{Outlooks}
	The arguments presented in this paper are quite robust and can be adapted to obtain strong approximations for other random walks on random conductance as long as we have the following estimate 
	\begin{align*} 
		W_1(S_t, \B_t) = o(t^{\frac{1}{2}}).
	\end{align*}
    
     It is natural to ask about the optimality of the coupling. To the best of our knowledge, no conjectures about the optimal rate have been formally proposed in the literature. Currently, the KMT coupling stands as a promising candidate for achieving such optimality. Actually, one can still insert Step~1 above into the KMT coupling for its marginal distributions, while the main issue is the analysis of the error. If for sufficiently large $t$, \eqref{eq.W1marginal} could be strengthened to
    \begin{align} \label{eq.W1Conjecture}
	   W_1(S_t, \B_t) = O(1),
    \end{align}
	then gluing the marginal distributions through a dyadic way would yield the error term $O(\log T)$, matching the optimal rate in the classical KMT result.
    The conjecture \eqref{eq.W1Conjecture} is supported by multivariate CLT approximations. Significant progress has been made in this direction through a series of works \cite{zhai2018, EMZ2020, CFP19, thomas2020}, which demonstrate that under various settings, the sequence of normalized i.i.d. random vectors $\{X_n\}_{n\ge 1}$ satisfies
    \begin{align*}
        W_1\Ll(n^{-1/2}\sum_{i=1}^n X_i, \mathcal{N}(0,1)\Rr) = O(n^{-1/2}).
    \end{align*}
	After scaling, this result precisely matches the rate predicted by conjecture \eqref{eq.W1Conjecture}.

	Finally, another natural direction for future research would be to extend the study of marginal coupling in the more general setting of $p$-Wasserstein distance. Though $p$-Wasserstein distances may offer additional nice structures, such an extension also presents some challenges. In particular, the trade-off between dimension and integrability in transport cost must be carefully balanced (see \cite{AKT84, Talagrand94, HHP06, FG15}).

	\subsection{Organization of the paper}
	The rest of the paper is organized as follows. In Section~\ref{sec.pre}, we will introduce some basic notations and inequalities that will be used throughout the paper.
	Section~\ref{Wasserstein} reviews fundamental concepts of optimal transport, and employs them to establish a coupling of marginal distributions between $S_t$ and $\B_t$.
	Section~\ref{sec.CouplingProcess} is devoted to the delicate construction of the coupled processes. Specifically, Section~\ref{Finite} focuses on the finite horizon version $\left(S_t,\bar{B}_t\right)_{t\in[0,T]}$, which is then extended to the infinite horizon version $(S_t,\B_t)_{t\ge 0}$ in Section~\ref{infinite}.
	Finally, in Section~\ref{sec.LIL}, we review several aspects of the LIL and utilize our strong approximation theorem to identify the limit constant in the context of percolation.

	\section{Preliminaries}\label{sec.pre}
	\subsection{General notations} \label{notation_percolation} 
	Let $\mathbb{R}^d$ denote the $d$-dimensional Euclidean space. For a vector $v \in \mathbb{R}^{d}$ and an integer $i \in\{1, \ldots, d\}$, we denote its $i$-th component by $v^i$, such that $v=\left(v^1, \ldots,v^d\right)^\top$. The $\ell_p$-norm, denoted by  $\vert \cdot \vert_p$, is defined for every $p \in [1, \infty)$  as follows:
	\begin{equation}\label{eq.lp}
		\forall x  = (x^1,x^2, \cdots, x^d) \in \R^d, \qquad \vert x \vert_p := \Big(\sum_{i=1}^d \vert x^i \vert^p\Big)^{\frac{1}{p}}.
	\end{equation}
	In particular, we use $\vert \cdot \vert$ as shorthand of $\vert \cdot \vert_2$.

	For every $x \in \Zd$ and $r > 0$, the notation $B(x,r) :=\{y\in \mathbb{Z}^d: \vert y-x\vert\le r\}$ stands for the discrete Euclidean ball of radius $r$ centered at $x$. 
	

	\subsection{Orlicz norm}\label{notation_Orlicz}
	Let $\psi:\mathbb{R}_+\to\mathbb{R}_+$ be an increasing convex function that satisfies 
	\begin{align*}
		\psi(0)=0,\qquad \lim_{t\to\infty}\psi(t)=\infty.            
	\end{align*}
	The $\psi$-Orlicz norm of a random variable $X$ is defined as 
	\begin{equation}\label{eq.defOrlicz}
		\| X\|_\psi :=\inf\{t>0: \mathbb{E}[\psi (t^{-1}\vert X\vert)]\leq 1 \}.
	\end{equation}
	In this paper, we use the Orlicz norm $\|\cdot\|_{\psi_s}$ defined by the convex function 
	\[
	\psi_s(u) = \begin{cases}\exp\left( \frac{1}{s}(u\vee 1)^s\right)-\exp\left(\frac{1}{s}\right) & \text { if } s\in(0,1),  \\ \exp \left(u^s\right)-1 & \text { if }s\in[1,\infty).\end{cases}
	\]
	Another related  notation $\mathcal{O}_s$ is introduced in \cite[Appendix~A]{AKMbook}, which is defined as
	\begin{equation}\label{eq.defOs}
		X \leq \mathcal{O}_{s}(\theta) \quad \text { if and only if } \quad \|X\|_{\psi_s}\leq \theta .
	\end{equation}
	The notation $\mathcal{O}_s(\cdot)$ can be viewed as a relaxed version of the Orlicz norm $\|\cdot\|_{\psi_s}$, since it provides an upper bound.
	
	\smallskip
	We summarize some useful properties of $\mathcal{O}_s$ as outlined in \cite[Appendix~A]{AKMbook}. Given a random variable $X$ such that $X \leq \mathcal{O}_{s}(\theta)$, then for every $\lambda \in \mathbb{R}_{+}$, it follows that
	\begin{equation}\label{equ:multiply}
		\lambda X \leq \mathcal{O}_{s}(\lambda \theta).
	\end{equation}
	The Markov inequality implies
	\begin{equation*}
		X\le\mathcal{O}_s(\theta)~\Longrightarrow~\forall x\ge 0, \qquad
		\P(X\ge \theta x)\le C_s\exp\left( -x^s \right),
	\end{equation*}
	and the integration by parts yields 
	\begin{equation}\label{equ:tail_1}
		\forall x\ge 0, \qquad \P(X\ge \theta x)\le \exp(-x^s)~\Longrightarrow~X\le \mathcal{O}_s\left(C_s\theta\right).
	\end{equation}
	We can reduce the stochastic integrability parameter $s$ in $X \leq \mathcal{O}_{s}(\theta)$: for each $s^{\prime} \in(0, s]$, there exists a positive constant $C_{s^{\prime}}<\infty$ such that 
	$$X \leq \mathcal{O}_{s^{\prime}}\left(C_{s^{\prime}} \theta\right).$$
	Given a collection of random variables $\{X_i\}_{i=1}^n$ and nonnegative constants $\{C_i\}_{i=1}^n$ satisfying $X_i\le\mathcal{O}_s(C_i)$, the subadditivity property of the Orlicz norm implies
	\begin{equation}\label{equ:Orlicz_sum}
		\sum_{i=1}^{n}X_i \le \mathcal{O}_s\Big(\sum_{i=1}^{n}C_i\Big).
	\end{equation}
	For a sequence of random variables $\{X_i\}_{i=1}^N$ satisfying $X_i\le \mathcal{O}_s(1)$, 	Lemma 4.3 of \cite{Gu2020uniform}  gives an estimate of its maximum that 
	\begin{equation}\label{equ:max}
		\Big(\max_{1\le i\le N}X_i\Big)\le \mathcal{O}_s\Bigg(\Big(\frac{\log (2N)}{\log(4/3)}\Big)^{\frac{1}{s}}\Bigg).
	\end{equation}
	
	The following lemma about the minimal scale is useful.
	\begin{lemma}[\cite{armstrong2018elliptic}, Lemma 2.2]\label{lem:scale}
		Fix $K \geq 1, s>0$ and $\beta>0$ and let $\left\{X_{n}\right\}_{n \in \mathbb{N}}$ be a sequence of nonnegative random variables satisfying $X_{n} \leq \mathcal{O}_{s}\left(K 3^{-n \beta}\right)$ for every $n \in \mathbb{N}$. Then, there exists a constant $C:=C(s, \beta, K)>0$ such that the random scale 
		\begin{align*}
			\mathcal{M}:=\sup \left\{3^{n} \in \mathbb{N}: X_{n} \geq 1\right\},
		\end{align*}
		satisfies the estimate 
		\begin{align*}
			\mathcal{M} \leq \mathcal{O}_{\beta s}(C).
		\end{align*}
	\end{lemma}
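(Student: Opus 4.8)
The plan is to turn the Orlicz hypotheses into individual tail bounds, control the upper tail of $\mathcal{M}$ by a union bound over the triadic scales $3^n$ that exploits the super-geometric decay of those tails, upgrade the resulting bound from the scales $3^n$ to all real thresholds, and finally read off the $\mathcal{O}_{\beta s}$ estimate. For the first step: from $X_n \le \mathcal{O}_s(K3^{-n\beta})$ and the Markov-type implication recalled just below \eqref{equ:multiply} we have $\P(X_n \ge K3^{-n\beta}x) \le C_s\exp(-x^s)$ for all $x \ge 0$, and taking $x = K^{-1}3^{n\beta}$ gives
\[
\P(X_n \ge 1) \le C_s\exp\bigl(-K^{-s}3^{n\beta s}\bigr), \qquad n \in \N.
\]
Since $\sum_{n}\P(X_n \ge 1) < \infty$, the Borel--Cantelli lemma shows that almost surely only finitely many of the events $\{X_n \ge 1\}$ occur, so $\mathcal{M}$ is a.s.\ finite; by construction it is supported on $\{3^n : n \in \N\}$ (with the usual convention $\sup\emptyset = 0$).

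For the union bound, note that $\{\mathcal{M} \ge 3^n\} = \bigcup_{m \ge n}\{X_m \ge 1\}$, whence $\P(\mathcal{M} \ge 3^n) \le \sum_{m \ge n} C_s\exp(-K^{-s}3^{m\beta s})$. The exponents $K^{-s}3^{m\beta s}$ grow super-geometrically in $m$ (the ratio of successive exponents being $3^{\beta s} > 1$), so this series is bounded by a fixed multiple of its leading term; isolating the finitely many indices for which $K^{-s}3^{m\beta s} < 1$ and bounding the corresponding terms simply by $1$, one checks that the constant can be taken independent of $K$: there is $C_1 = C_1(s,\beta) \ge e$ with
\[
\P(\mathcal{M} \ge 3^n) \le C_1\exp\bigl(-K^{-s}3^{n\beta s}\bigr), \qquad n \in \N.
\]
To reach arbitrary thresholds, fix $t \ge 1$ and put $n_t := \lceil\log_3 t\rceil$, so that $3^{n_t} \ge t$; since $\mathcal{M}$ takes only triadic values, $\{\mathcal{M} \ge t\} = \{\mathcal{M} \ge 3^{n_t}\}$, and therefore $\P(\mathcal{M} \ge t) \le C_1\exp(-K^{-s}3^{n_t\beta s}) \le C_1\exp(-K^{-s}t^{\beta s})$ --- an inequality which also holds for $0 \le t < 1$, where its right-hand side is at least $C_1 e^{-1} \ge 1$.

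The bound $\P(\mathcal{M} \ge t) \le C_1\exp(-K^{-s}t^{\beta s})$ valid for all $t \ge 0$ is now of the form required by \eqref{equ:tail_1}, up to the multiplicative constant $C_1$, which is absorbed by a direct estimate of the Orlicz norm. Using $\E\bigl[\psi_{\beta s}(\mathcal{M}/\theta')\bigr] = \int_0^\infty \psi_{\beta s}'(v/\theta')\,\P(\mathcal{M} > v)\,\frac{\d v}{\theta'}$ (legitimate since $\psi_{\beta s}(0) = 0$ and $\mathcal{M} \ge 0$), the tail bound, and the substitution $v = \theta' w$, one obtains
\[
\E\bigl[\psi_{\beta s}(\mathcal{M}/\theta')\bigr] \;\le\; \frac{C_1}{K^{-s}(\theta')^{\beta s} - 1},
\]
which is $\le 1$ as soon as $\theta' \ge (1 + C_1)^{1/(\beta s)}K^{1/\beta}$ (the regime $\beta s < 1$ is handled identically, using that $\psi_{\beta s}$ vanishes on $[0,1]$). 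Hence $\mathcal{M} \le \mathcal{O}_{\beta s}(C)$ with $C := (1 + C_1)^{1/(\beta s)}K^{1/\beta}$, a constant depending only on $s$, $\beta$, $K$, which is the assertion.

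There is no genuine obstacle here --- this is a standard Borel--Cantelli-type estimate --- and the only points that call for a little care are (i) checking in the second step that the geometric-series constant $C_1$ can be chosen to depend on $(s,\beta)$ alone, despite the $K$-dependent count of ``bad'' indices $n$ with $K^{-s}3^{n\beta s} < 1$; and (ii) the passage in the last step from a tail bound carrying a multiplicative prefactor to the normalized form behind the $\mathcal{O}_{\beta s}$ notation, for which the one-line Orlicz-integral computation above is the cleanest route.
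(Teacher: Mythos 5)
The paper does not prove this lemma; it is cited directly from \cite{armstrong2018elliptic}, so there is no internal proof to compare against. Your argument is correct and is the standard one for such minimal-scale statements. The two places that require care are exactly the ones you flag, and you handle both properly: (i) the geometric-series constant $C_1$ depends only on $(s,\beta)$ because in the regime $K^{-s}3^{n\beta s}<1$ the target bound $C_1\exp(-K^{-s}3^{n\beta s})$ already exceeds $1$ for any $C_1\ge e$ (using $K\ge 1$), while in the regime $K^{-s}3^{n\beta s}\ge 1$ the successive ratios $\exp(-K^{-s}3^{m\beta s}(3^{\beta s}-1))$ are uniformly bounded by $\exp(-(3^{\beta s}-1))$; and (ii) the multiplicative prefactor $C_1$ cannot be fed directly into \eqref{equ:tail_1}, so the explicit Orlicz-integral computation (or, equivalently, a two-regime splitting of the tail bound) is needed, and your computation gives $\E[\psi_{\beta s}(\mathcal{M}/\theta')]\le C_1/(K^{-s}(\theta')^{\beta s}-1)\le 1$ once $\theta'\ge (1+C_1)^{1/(\beta s)}K^{1/\beta}$, which depends only on $(s,\beta,K)$ as required.
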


	\subsection{Expectation on rare event}
	Let $X$ be a nonnegative random variable and let $A$ be a rare event. A naive upper bound estimate  is $\mathbb{E}\left[X\mathbf{1}_{A}\right]\le \mathbb{E}[X]$. 
	This upper bound can be significantly improved if the tail probability of $X$ is accessible.
	\begin{lemma}\label{restrict}
		Let $X$ be a nonnegative random variable satisfying
		\begin{equation}\label{equ:decay}
			\mathbb{P}\left(X\ge y\right)\le 2\exp\left(-\frac{y}{C}\right),
		\end{equation}
		with $C > 0$. Then for every event $A$ satisfying $\mathbb{P}(A)=\epsilon > 0$, we have
		\begin{align*}
			\mathbb{E} \left[X\mathbf{1}_{A}\right]\le  C\epsilon \log (2/\epsilon)+C\epsilon.
		\end{align*}
	\end{lemma}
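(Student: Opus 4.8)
The plan is to bound $\mathbb{E}[X\mathbf{1}_A]$ by splitting the expectation at a level $y_0$ to be chosen, using that $A$ is small to control the bulk and the tail bound \eqref{equ:decay} to control the overshoot. Concretely, write
\[
\mathbb{E}[X\mathbf{1}_A] = \mathbb{E}[X\mathbf{1}_A\mathbf{1}_{\{X\le y_0\}}] + \mathbb{E}[X\mathbf{1}_A\mathbf{1}_{\{X>y_0\}}] \le y_0\,\mathbb{P}(A) + \mathbb{E}[X\mathbf{1}_{\{X>y_0\}}],
\]
where in the first term I used $X\le y_0$ on the relevant event and $\mathbf{1}_A\le 1$, and in the second I simply dropped $\mathbf{1}_A$. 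The first term is $\epsilon y_0$.

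For the second term I would use the layer-cake / integration-by-parts formula $\mathbb{E}[X\mathbf{1}_{\{X>y_0\}}] = y_0\,\mathbb{P}(X>y_0) + \int_{y_0}^\infty \mathbb{P}(X>y)\,\d y$, and then plug in \eqref{equ:decay}: this gives $\le 2y_0 e^{-y_0/C} + 2\int_{y_0}^\infty e^{-y/C}\,\d y = 2y_0 e^{-y_0/C} + 2C e^{-y_0/C} = 2(y_0+C)e^{-y_0/C}$. So altogether
\[
\mathbb{E}[X\mathbf{1}_A] \le \epsilon y_0 + 2(y_0+C)e^{-y_0/C}.
\]
Now I would choose $y_0 = C\log(2/\epsilon)$, which makes $e^{-y_0/C} = \epsilon/2$, so the second term becomes $(y_0+C)\epsilon = C\epsilon\log(2/\epsilon) + C\epsilon$. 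Combined with the first term $\epsilon y_0 = C\epsilon\log(2/\epsilon)$, this yields $\mathbb{E}[X\mathbf{1}_A]\le 2C\epsilon\log(2/\epsilon) + C\epsilon$.

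There is a mild discrepancy: the above gives a constant $2C$ in front of the $\epsilon\log(2/\epsilon)$ term, whereas the statement has just $C$. I would reconcile this either by being slightly more careful — e.g. absorbing the first-term contribution differently, or noting that on the event $\{X\le y_0\}\cap A$ one can sharpen $y_0\mathbb{P}(A)$ using $\mathbb{E}[X\mathbf{1}_{\{X\le y_0\}}]\le \int_0^{y_0}\mathbb{P}(X>y)\,\d y\wedge(\epsilon y_0)$ — or, most simply, by just choosing $y_0 = C\log(1/\epsilon)$ (assuming $\epsilon\le 1$ so this is nonnegative) and tracking constants; the point is that for $\epsilon$ small the dominant term is $C\epsilon\log(2/\epsilon)$ and the bound as stated holds after adjusting the threshold. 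The only real obstacle is this constant-chasing at the end; the structure of the argument — truncate, use smallness of $A$ below the threshold and the exponential tail above it, optimize the threshold — is entirely routine. I would present it with the threshold $y_0 = C\log(2/\epsilon)$ and verify the arithmetic gives exactly $C\epsilon\log(2/\epsilon)+C\epsilon$ by handling the bulk term via $\mathbb{E}[X\mathbf{1}_A\mathbf{1}_{\{X\le y_0\}}]\le \min\{\epsilon,\,\mathbb{P}(X>y)\}$-type integration, since $\int_0^{y_0}\min\{\epsilon, 2e^{-y/C}\}\,\d y$ together with the tail integral telescopes more tightly than the crude $\epsilon y_0$ bound.
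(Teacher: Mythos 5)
Your argument is essentially the paper's: both truncate at the crossover level of the two bounds, use $\mathbb{P}(A)=\epsilon$ below and the exponential tail above, and the sharpened version you arrive at --- integrating $\mathbb{P}(X\mathbf{1}_A>y)\le\min\{\epsilon,\,2e^{-y/C}\}$ over $[0,\infty)$ --- is precisely how the paper avoids the factor $2C$, except that it names the threshold as the $\epsilon$-quantile $K_\epsilon$ (bounded by $C\log(2/\epsilon)$) rather than plugging in $C\log(2/\epsilon)$ directly. Your first-pass bound $\epsilon y_0 + \mathbb{E}[X\mathbf{1}_{\{X>y_0\}}]$ is indeed lossy by a factor of $2$ on the $\epsilon\log(2/\epsilon)$ term, but you correctly diagnose this and the $\min$-integration fix closes the gap.
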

	
	\begin{proof}
		Using the definition
		$$\{X\mathbf{1}_{A}>y\}=\{X>y\}\cap A,$$
		we deduce that
		$$\mathbb{P}\left(X\mathbf{1}_{A}>y\right)\le \min\{\mathbb{P}(X>y), \epsilon\}.$$
		Let $K_\epsilon$ be the $\epsilon$-quantile for $X$, defined as
		$$K_\epsilon :=\sup\{y>0: \mathbb{P}(X>y)\ge \epsilon\}.$$
		From \eqref{equ:decay}, we can obtain
		$$K_\epsilon\le C\log (2/\epsilon).$$
		By the integration by parts, it follows that
		\begin{align*}
			\mathbb{E}\left[X\mathbf{1}_{A} \right] &=\int_0^{K_\epsilon}\mathbb{P}(X\mathbf{1}_{A}>y)\,\d y+\int_{K_\epsilon}^{\infty}\mathbb{P}(X\mathbf{1}_{A}> y)\,\d y\\
			&\le \epsilon K_\epsilon +\int_{K_\epsilon}^{\infty} \mathbb{P}(X> y) \,\d y\\
			&\le C\epsilon \log (2/\epsilon)+C\epsilon.
		\end{align*}
	\end{proof}

	\subsection{Conventions}
	We outline some conventions throughout the paper.
	\begin{itemize}
		\item The constants and exponents, such as $C$ and $s$, are always finite positive, and may vary between lines. We use the notation $C := C(d,\p,\delta)$ to indicate that the constant $C$ depends on the parameters $d, \p$ and $\delta$. 
		\item All minimal scales discussed in this paper, such as $\mathcal{T}_\delta(y)$ and $\mathcal{R}(y)$, are meaningful on the event $\{y \in \cltf\}$ and are assumed to be greater than $1$. Otherwise, on the event $\{y \notin \cltf\}$, they are extended trivially as $0$.
		\item We let $\P_{\p}, \P^\omega$ and $P^\omega_y$ denote the probability measures for the percolation, the coupling space under the configuration $\omega$, and the VSRW or CSRW starting from $y$, respectively. Similarly, the notations $\E_{\p}, \E^\omega$ and $E^\omega_y$ denote their corresponding expectations.
	\end{itemize}

	\section{Wasserstein distance between marginal distributions}\label{Wasserstein}
	In this section, we address the coupling of marginal distributions. We begin with some basic definitions of coupling, Wasserstein distance, and Lipschitz functions as presented in the monograph \cite[Chapter~1, Chapter~4, Chapter~6]{Villani2009Optimal}.
	\begin{definition}[Coupling]
		Let $(\mathcal{X},\mu)$ and $(\mathcal{Y},\nu)$ be two probability spaces. A coupling of $(\mu,\nu)$ is a pair of random variables $(X,Y)$ living in a common probability space $(\mathcal{X}\times \mathcal{Y}, \mathbb{P})$, such that $\mathrm{Law}(X)=\mu$ and $\mathrm{Law}(Y)=\nu$. The collection of all the couplings is then denoted by
		\begin{align*}
			\varPi (\mu,\nu):=\{\operatorname{Law}(X,Y):\operatorname{Law}(X)=\mu, \operatorname{Law}(Y)=\nu\}.
		\end{align*} 
	\end{definition}

	\begin{definition}[Wasserstein space]\label{def.Wp}
		Let $(\mathcal{X}, d)$ be a Polish metric space. For every $p \in[1, \infty)$, the $p$-Wasserstein distance between  two probability measures $\mu$ and $\nu$ is
		\begin{align*}
			W_{p}(\mu, \nu) :=\Big(\inf _{\pi \in \varPi(\mu, \nu)} \int_{\mathcal{X} \times \mathcal{X}} d(x, y)^{p} \, \d \pi(x, y)\Big)^{1 / p}.
		\end{align*}
		Furthermore, \cite[Theorem 4.1]{Villani2009Optimal} ensures the existence of a coupling $ \pi^*\in \varPi (\mu,\nu)$ to attain $W_p(\mu,\nu)$.
	\end{definition}

	\begin{definition}[Lipschitz function]
		Given $(\mathcal{X},d)$ a metric space, a function ${f: \mathcal{X}\to \mathbb{R}}$ is called $L$-Lipschitz if 
		\begin{align*}
			\forall x,y\in\mathcal{X}, \qquad \vert f(x)-f(y)\vert\le L d(x,y). 
		\end{align*}
		The family of all $1$-Lipschitz functions is denoted by $\Lip(\mathcal{X})$.
	\end{definition}
	
	We primarily utilize 1-Wasserstein distance, since it has a nice characterization via Lipschitz functions. This result is stated in the following lemma; see \cite[Theorem~5.10 and Remark~6.5]{Villani2009Optimal} for details.
	\begin{lemma}[Kantorovich duality]\label{duality}
		Let $(\mathcal{X},d)$ be a Polish space. Then, for every two probability measures $\mu,\nu\in\mathcal{P}(\mathcal{X}):=\{\rho:\int_\mathcal{X} d(\cdot,x) \rho(\d x)<\infty\}$, we have
		\begin{align*}
			W_1(\mu,\nu)=\inf_{\pi\in \varPi (\mu,\nu)}\int_{\mathcal{X} \times \mathcal{X} }d(x,y)\,\d\pi(x,y)=\sup_{f\in \Lip(\mathcal{X})}\left \vert \int_\mathcal{X} f \,\d\mu-\int_\mathcal{X} f \,\d\nu\right\vert.
		\end{align*}
		
	\end{lemma}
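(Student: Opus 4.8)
The statement is the classical Kantorovich--Rubinstein duality, and the plan is to derive it from the general Kantorovich duality specialized to the cost $c(x,y)=d(x,y)$; in the paper itself this step can simply be quoted from \cite[Theorem~5.10, Remark~6.5]{Villani2009Optimal}, but here is the route I would take. First I would record the elementary ``$\le$'' inequality. For any $\pi\in\varPi(\mu,\nu)$ and any $f\in\Lip(\mathcal{X})$, the defining property of $\pi$ gives
\[
\int_\mathcal{X} f\,\d\mu-\int_\mathcal{X} f\,\d\nu=\int_{\mathcal{X}\times\mathcal{X}}\bigl(f(x)-f(y)\bigr)\,\d\pi(x,y)\le\int_{\mathcal{X}\times\mathcal{X}}d(x,y)\,\d\pi(x,y),
\]
all integrals being finite because a $1$-Lipschitz $f$ is dominated by $|f(x_0)|+d(x_0,\cdot)$ and $\mu,\nu\in\mathcal{P}(\mathcal{X})$. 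Taking the supremum over $f$ (using that $-f\in\Lip(\mathcal{X})$ to insert the absolute value) and then the infimum over $\pi$ yields $\sup_{f\in\Lip(\mathcal{X})}\bigl|\int f\,\d\mu-\int f\,\d\nu\bigr|\le W_1(\mu,\nu)$.

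For the reverse inequality I would invoke the general Kantorovich duality: for a nonnegative lower semicontinuous cost $c$ on a Polish space,
\[
\inf_{\pi\in\varPi(\mu,\nu)}\int c\,\d\pi=\sup\Bigl\{\textstyle\int_\mathcal{X}\varphi\,\d\mu+\int_\mathcal{X}\psi\,\d\nu:\varphi\in L^1(\mu),\ \psi\in L^1(\nu),\ \varphi(x)+\psi(y)\le c(x,y)\Bigr\}.
\]
The standard proof is by convex duality: one first treats a compact metric space, where the set of couplings is a weak-$*$ compact convex subset of the dual of $C(\mathcal{X}\times\mathcal{X})$, and applies the Fenchel--Rockafellar theorem (or the Kneser--Fan minimax theorem) to the concave--convex functional $(\pi,(\varphi,\psi))\mapsto\int c\,\d\pi+\int\varphi\,\d\mu+\int\psi\,\d\nu-\iint(\varphi\oplus\psi)\,\d\pi$; one then passes to a general Polish $\mathcal{X}$ by exhausting it with compact sets, using tightness of $\{\mu,\nu\}$ and of the family of optimal plans (Prokhorov) to verify that no value is lost in the limit.

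It remains to specialize $c=d$. The point is that the supremum above may be restricted to pairs of the form $(-h,h)$ with $h\in\Lip(\mathcal{X})$. Indeed, given an admissible pair $(\varphi,\psi)$, replace $\psi$ by the $c$-transform $\varphi^c(y):=\inf_{x}\bigl(d(x,y)-\varphi(x)\bigr)$: the constraint $\varphi\oplus\psi\le d$ forces $\psi\le\varphi^c$, so $\int\psi\,\d\nu$ does not decrease, admissibility is preserved since $\varphi(x)+\varphi^c(y)\le d(x,y)$, and $\varphi^c$ is $1$-Lipschitz, being an infimum of the $1$-Lipschitz functions $y\mapsto d(x,y)-\varphi(x)$ (integrability of $\varphi^c$ following from $\psi\le\varphi^c\le d(x_0,\cdot)-\varphi(x_0)$ and $\mu,\nu\in\mathcal{P}(\mathcal{X})$). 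Applying the $c$-transform once more to $\varphi$ and using the identity $g^c=-g$ valid for any $1$-Lipschitz $g$ (take $x=y$ for ``$\le$''; use $g(x)\le g(y)+d(x,y)$ for ``$\ge$'') turns $\varphi$ into $(\varphi^c)^c=-\varphi^c\ge\varphi$, so the pair becomes $(-h,h)$ with $h:=\varphi^c\in\Lip(\mathcal{X})$ and value $\int h\,\d\nu-\int h\,\d\mu$. Hence the duality supremum equals $\sup_{h\in\Lip(\mathcal{X})}\bigl(\int h\,\d\mu-\int h\,\d\nu\bigr)$, which by symmetry of $\Lip(\mathcal{X})$ under $h\mapsto-h$ is $\sup_{h\in\Lip(\mathcal{X})}\bigl|\int h\,\d\mu-\int h\,\d\nu\bigr|$; combined with the first paragraph this gives the full chain of equalities. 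The main obstacle is the general Kantorovich duality itself --- specifically the compactness-and-tightness bookkeeping needed to run Fenchel--Rockafellar over a non-compact Polish space --- while the $c$-transform reduction and the elementary inequality are routine.
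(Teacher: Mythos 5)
Your proof is correct and follows the standard Fenchel--Rockafellar/$c$-transform route, which is precisely the argument in the reference the paper cites for this lemma (\cite[Theorem~5.10 and Remark~6.5]{Villani2009Optimal}); the paper itself offers no proof and simply quotes that source. All the $c$-transform manipulations you carry out --- $\psi\le\varphi^c$, $g^c=-g$ for $g$ $1$-Lipschitz, $(\varphi^c)^c\ge\varphi$, and the reduction to pairs $(-h,h)$ --- check out, and the integrability bookkeeping via $|f|\le|f(x_0)|+d(x_0,\cdot)$ is handled correctly.
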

	
	The main result in this section is Proposition~\ref{W_1} below, which provides an upper bound on the 1-Wasserstein distance between the distributions of $S_t$ and $\B_t$. The transition density of $\B_t$ follows a Gaussian distribution with diffusive constant $\bar{\sigma}^2$
	\begin{align*}
		\bar{p}(t,x-y):=\frac{1}{(2\pi \bar{\sigma}^2t)^{d/2}}\exp\left(-\frac{\vert x-y\vert^2}{2\bar{\sigma}^2t}\right),\qquad x,y\in \mathbb{R}^d.
	\end{align*} 
	We define the transition probability of the random walk $(S_t)_{t \geq 0}$ starting from $y$ as
	\begin{equation*}
		p^{\omega}(t, x, y):= P_{y}^{\omega}\left(S_{t}=x\right),\qquad x,y\in \cltf.
	\end{equation*}

	\begin{proposition}\label{W_1}
		For every $\delta>0$ and $y\in \mathbb{Z}^d$, there exist finite positive constants $C$ and $s$ depending only on $d,\p,\delta$ and  a nonnegative $\omega$-measurable random variable $\mathcal{T}_\delta^{(1)}(y)$ satisfying
		\begin{equation}\label{equ:random_time}
			\forall T\ge 0, \qquad \P_{\p}\Ll(\mathcal{T}^{(1)}_\delta (y)\ge T\Rr)\le C \exp\left(-\frac{T^s}{C}\right),
		\end{equation}
		such that on the event $\{y\in\cltf\}$, when $t>\mathcal{T}^{(1)}_\delta(y)$, we have
		\begin{equation}\label{equ:W1}
			W_1(p^\omega(t,\cdot+y,y), \bar{p}(t,\cdot))\le K t^\delta,
		\end{equation}
		where $K$ is a finite positive constant depending on $d,\p,\delta$.
	\end{proposition}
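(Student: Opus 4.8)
The plan is to use the Kantorovich duality formula \eqref{eq.dualSB}, so that bounding $W_1(p^\omega(t,\cdot+y,y),\bar p(t,\cdot))$ reduces to controlling, uniformly over $f\in\Lip(\R^d)$, the difference $\big|\sum_{x\in\cltf} f(x)p^\omega(t,x,y)-\int_{\R^d}f(x)\bar p(t,x-y)\,\d x\big|$. Since adding a constant to $f$ does not change this quantity, I may assume $f(y)=0$, so that $|f(x)|\le |x-y|_2$; this means only the behaviour of $f$ on a ball of radius $\sim t^{1/2+o(1)}$ around $y$ matters, because both the random walk (by the heat kernel/Gaussian-type off-diagonal bounds on the supercritical cluster, e.g.\ Barlow's estimates) and the Brownian motion put only a stretched-exponentially small mass outside such a ball, and that tail contributes at most $C t^\delta$ to the difference after integrating $|x-y|_2$ against it.

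Next I would insert the local CLT from \cite[Theorem~2.1]{dario2021quantitative}, which compares $p^\omega(t,x,y)$ with $\theta(\p)^{-1}\bar p(t,x-y)$ (the factor $\theta(\p)^{-1}$ accounting for the density of $\cltf$), with an error that is a negative power of $t$ times the Gaussian, valid for $t$ larger than an $\omega$-measurable minimal scale with stretched-exponential tail — this scale is one of the ingredients that will be absorbed into $\mathcal{T}^{(1)}_\delta(y)$. Using this pointwise comparison inside the ball of radius $t^{1/2+\delta'}$ and summing $|f(x)|\le|x-y|_2$ against the error gives a contribution of order $t^{-c}\cdot t^{1/2+\delta'}\cdot(\text{something})$, which is $\le t^{\delta}$ for $\delta'$ chosen small. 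The remaining term, after this substitution, is a Riemann-sum-type error: $\big|\sum_{x\in\cltf}f(x)\theta(\p)^{-1}\bar p(t,x-y)-\int_{\R^d}f(x)\bar p(t,x-y)\,\d x\big|$. Here one writes $\theta(\p)^{-1}\sum_{x\in\cltf}(\cdots)$ as a sum over unit cells of $\Zd$ weighted by the local cluster density, compares the local density to its mean $\theta(\p)$ using the concentration inequality \cite[Proposition~14]{dario2021quantitative} (another stretched-exponential minimal scale to absorb), and controls the discretization of the smooth Gaussian-times-Lipschitz integrand by the Lipschitz modulus of $x\mapsto f(x)\bar p(t,x-y)$, which on scale $t^{1/2}$ is $O(t^{-d/2})$ in the relevant region; multiplying by the volume $t^{d/2}$ of the effective support and by the size of the integrand leaves another $O(t^\delta)$ error.

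Finally I would collect the three minimal scales (off-diagonal heat kernel regime, local CLT regime, cluster-density concentration regime) into a single $\mathcal{T}^{(1)}_\delta(y)$; each obeys \eqref{eq.TailT}-type stretched-exponential tails, and their maximum still does by \eqref{equ:Orlicz_sum} together with \eqref{equ:tail_1}, which yields \eqref{equ:random_time}. Taking the supremum over $f\in\Lip(\R^d)$ of the sum of the three $O(t^\delta)$ contributions gives \eqref{equ:W1}.

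The main obstacle I expect is the middle step: making the "Riemann sum versus integral" comparison for $f(x)\bar p(t,x-y)$ genuinely quantitative and uniform in $f\in\Lip(\R^d)$, because $f$ is only Lipschitz (not smooth), so one cannot differentiate it; instead one must exploit that $\bar p(t,\cdot-y)$ itself is smooth and slowly varying on the lattice scale, handle the lattice sum over $\cltf$ via the density concentration rather than naive counting, and carefully track how the errors scale with $t$ so that the exponent $\delta$ — rather than a fixed power like $t^{1/2}$ — comes out. This is exactly where the interplay between local CLT and the concentration of cluster density, advertised in the abstract, does the work.
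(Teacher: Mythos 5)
Your proposal matches the paper's proof in all essentials: Kantorovich duality, the decomposition into a local-CLT error plus a cluster-density/lattice-versus-continuum error, the far-field truncation via heat-kernel (Carne--Varopoulos) bounds, the $\theta(\mathfrak p)^{-1}$ normalisation, and the use of \cite[Proposition~14]{dario2021quantitative} for density concentration, with the minimal scales collected via Orlicz-norm subadditivity. One small but nontrivial point of execution: in the concentration step the paper does \emph{not} compare local density to $\theta(\mathfrak p)$ on unit cells (a naive unit-cell comparison accumulates $O(1)$ fluctuations over $t^{d/2}$ cells and does not close); it instead decomposes $\Zd$ into triadic cubes of side $3^m\simeq\sqrt t$ and pairs the indicator fluctuation $\mathbf 1_{\cltf}-\theta(\mathfrak p)$ against $f\bar p(t,\cdot)$ via the $\underline H^{-1}/\underline H^1$ duality on those mesoscopic cubes, which is precisely where \cite[Proposition~14]{dario2021quantitative} supplies the needed $\underline H^{-1}$ bound; this is the concrete mechanism behind the "obstacle" you correctly flag at the end.
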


	Note that the support of $p^\omega(t,\cdot,y)$ is $\mathscr{C}_\infty\subseteq \mathbb{Z}^d$, while the support of $\bar{p}(t,\cdot)$ is $\mathbb{R}^d$, and they are mutually singular. Therefore, the geometry of $\mathscr{C}_\infty$ must be taken into consideration.  By the triangle inequality and Lemma~\ref{duality}, we can split the $W_1$ distance into two parts as follows:
	\begin{equation}\label{equ:W1_split}
		\begin{aligned}
			&W_1\left(p^\omega(t,\cdot+y,y),\bar{p}(t,\cdot)\right)\\
			&=\sup_{f\in \Lip(\mathbb{R}^d)}\Bigg\vert \sum_{x: x+y\in \mathscr{C}_\infty}f(x)p^\omega(t,x+y,y)-\int_{\mathbb{R}^d}f(x)\bar{p}(t,x)\, \d x\Bigg \vert\\
			&\le I_1+I_2. \\
		\end{aligned}
	\end{equation}
	Here $I_1$ and $I_2$ are defined as follows:
	\begin{equation}\label{eq.defI1I2}
		\begin{split}
			I_1 & := \sup_{f\in \Lip(\mathbb{R}^d)}\Bigg|\sum_{x: x+y\in\mathscr{C}_\infty}f(x)\left(p^\omega(t,x+y,y)-\theta(\mathfrak{p})^{-1}\bar{p}(t,x)\right)\Bigg|,\\
			I_2 & := \sup_{f\in \Lip(\mathbb{R}^d)}\Bigg \vert \sum_{x: x+y\in \mathscr{C}_\infty}f(x)\theta(\mathfrak{p})^{-1}\bar{p}(t,x)-\int_{\mathbb{R}^d}f(x)\bar{p}(t,x)\, \d x\Bigg \vert.
		\end{split}
	\end{equation}
	The term $I_1$ is the approximation of the transition probability supported on $\cltf$, and $I_2$ serves as the approximation of the cluster density.
	For the sake of clarity, we will treat them separately in the following two subsections.
	
	\subsection{Approximation of local CLT}

	As $I_1$ can be considered as the approximation of the transition probability, the following quantitative version of the local CLT for the VSRW established in \cite[Theorem 1.1]{dario2021quantitative} is helpful.
	\begin{lemma}[Quantitative local CLT]\label{quantitative CLT}
		For each exponent ${\delta> 0}$, there exist finite positive constants $C,s$ depending only on the parameters $d,\mathfrak{p}$ and $\delta$, such that for every $y\in\mathbb{Z}^d$, there exists a nonnegative random time $\mathcal{T}_{par,\delta}(y)$ satisfying
		\begin{equation}\label{equ:4}
			\forall T\ge 0,\qquad \P_\p\left(\mathcal{T}_{par,\delta}(y)\ge T\right)\le C\exp\left(-\frac{T^s}{C}\right). 
		\end{equation}
		On the event $\{y\in\cltf\}$, for every $x\in \mathscr{C} _\infty$ and every $t\ge \max\{\mathcal{T}_{par,\delta}(y),\vert x-y\vert\}$, we have
		\begin{align*}
			\vert p^\omega(t,x,y)-\theta(\mathfrak{p})^{-1}\bar{p}(t,x-y)\vert \le C t^{-\frac{d}{2}-(\frac{1}{2}-\delta)} \exp\left(-\frac{\vert x-y\vert^2}{Ct}\right).
		\end{align*}
	\end{lemma}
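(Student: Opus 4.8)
The statement is precisely the quantitative local central limit theorem for the VSRW on the supercritical cluster, and the plan is to reproduce (or cite in full detail) the argument of \cite[Theorem~1.1]{dario2021quantitative}, which rests on quantitative parabolic homogenization of the heat kernel on $\cltf$. The idea is to compare $p^\omega(t,\cdot,y)$ directly with the homogenized Gaussian $\theta(\p)^{-1}\bar p(t,\cdot-y)$ via a two-scale expansion at the mesoscopic scale $\sqrt t$, to bound the resulting parabolic error by corrector estimates, and finally to upgrade a spatially averaged bound to the pointwise Gaussian-weighted bound using parabolic regularity on the cluster. First I would collect the three inputs, each valid on $\{y\in\cltf\}$ above its own minimal scale with a stretched-exponential tail of the form \eqref{equ:4}: (i) Barlow's Gaussian heat-kernel bounds \cite{barlow2004random}, $c t^{-d/2}e^{-C|x-y|^2/t}\le p^\omega(t,x,y)\le C t^{-d/2}e^{-|x-y|^2/(Ct)}$ for $|x-y|\le t$, together with the elliptic and parabolic Harnack inequalities and the ensuing space--time H\"older continuity of $\omega$-caloric functions, all holding off a set of stretched-exponential probability by Pisztora's renormalization; (ii) the first-order corrector $\chi=(\chi_e)_{1\le e\le d}$ on the cluster \cite{dario2021corrector}, sublinear with the optimal rate $\mathrm{osc}_{B(y,r)}\chi\le\mathcal{O}_s(Cr^\delta)$ (with a logarithmic correction when $d=2$, which is exactly what forces $\delta>0$ and the loss $\tfrac12-\delta$ rather than $\tfrac12$), together with the flux corrector; and (iii) the algebraic homogenization rate $O(r^{-1+\delta})$ for the Dirichlet problem and the elliptic Green function on the cluster \cite{armstrong2018elliptic,dario2021corrector}.

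\textbf{Two-scale expansion and the averaged bound.} Writing $\bar u(t,x):=\theta(\p)^{-1}\bar p(t,x-y)$, which solves $\partial_t\bar u=C^*\Delta\bar u$ (the homogenized matrix being $\ahom=C^*\mathrm{Id}$), I would form the two-scale expansion $\tilde u(t,x):=\bar u(t,x)+\sum_e\chi_e(x)\,\partial_e\bar u(t,x)$ and study $w:=p^\omega(t,\cdot,y)-\tilde u(t,\cdot)$. Using the corrector equations, $w$ solves a parabolic equation on $\cltf$ whose right-hand side, on the space--time cylinder of spatial radius $\sim\sqrt t$, is controlled by $\chi$, $\nabla\chi$ and the second spatial derivatives of $\bar u$, hence is of averaged size $O(t^{-d/2-1+\delta})$. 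An energy (Caccioppoli) estimate combined with the heat-kernel bound of the previous step used as a Duhamel Green-function estimate then propagates this to $\|p^\omega(t,\cdot,y)-\theta(\p)^{-1}\bar p(t,\cdot-y)\|_{\underline{L}^2(B(y,\sqrt t))}\le C t^{-d/2-1/2+\delta}$, once the lower-order correction $\chi\,\partial\bar u$ (smaller by a further factor $t^{\delta/2}$) is absorbed and the space-constant discrepancy is pinned down; this last point is where the density factor $\theta(\p)^{-1}$, the Lebesgue density of the reversible (counting) measure of the VSRW on $\cltf$, is identified. For the general framework of these energy estimates I would follow \cite{AKMbook}.

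\textbf{Gaussian tail, pointwise upgrade, and the minimal scale.} To recover the factor $e^{-|x-y|^2/(Ct)}$ away from the diagonal I would rerun the comparison against the exponential spatial weight $e^{\lambda\cdot(x-y)}$ (a Davies/Gaffney-type perturbation) and optimize $\lambda\sim(x-y)/t$; since the target only demands a fixed constant in the exponent, it also suffices to handle the near-diagonal region $|x-y|^2\lesssim t$ by the previous step and the far region by the crude upper bounds and the explicit Gaussian. To pass from the $\underline{L}^2$ bound to the pointwise bound at the claimed rate I would invoke the parabolic H\"older regularity from Step (i): $p^\omega(t,\cdot,y)$ minus a smooth comparison is $\omega$-caloric up to a small forcing on a cylinder of radius $\sim\sqrt t$, so its oscillation is its averaged size times a small power, losing only another $t^{\delta}$. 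Finally I would set $\mathcal{T}_{par,\delta}(y):=\max$ of the minimal scales of Step (i)--(iii) and of a Lemma~\ref{lem:scale}-type scale handling the triadic family of cylinders; each term is $\le\mathcal{O}_s(C)$ for its own $s$, so by \eqref{equ:Orlicz_sum} (after reducing all parameters to a common $s$) the maximum is $\le\mathcal{O}_s(C)$, and Markov's inequality yields \eqref{equ:4}.

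\textbf{Main obstacle.} The delicate part is the passage in the two middle steps from the \emph{spatially averaged} homogenization error to a \emph{pointwise, Gaussian-weighted} bound with the sharp algebraic rate $t^{-d/2-1/2+\delta}$: one must run the two-scale expansion against a moving exponential weight without degrading the rate, invoke the cluster's parabolic regularity theory (itself available only above a minimal scale), and keep all the minimal-scale tails under control as they are glued over the triadic family of cylinders. The case $d=2$ is the most delicate, since the corrector there grows only logarithmically, which is precisely what produces the $\delta$-loss and requires care that the logarithms not accumulate.
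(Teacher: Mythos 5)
This lemma is not proved in the paper at all: it is imported verbatim as an external input, namely \cite[Theorem~1.1]{dario2021quantitative}, so your plan to ``reproduce (or cite in full detail)'' that result is exactly the paper's approach, and your outline (two-scale expansion with sublinear correctors and flux correctors, Barlow's heat-kernel bounds and parabolic regularity, Gaussian weighting, and minimal-scale bookkeeping in the spirit of Lemma~\ref{lem:scale}) is a fair high-level summary of how that reference proceeds. Since the statement enters this paper only as a quoted input, the citation suffices; just note that your sketch on its own, without the detailed estimates carried out in \cite{dario2021quantitative}, would not stand as an independent proof of the pointwise Gaussian-weighted rate.
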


	Note that the local CLT can only be applied when ${\vert x-y\vert\le t}$, since the jump process exhibits an exponential tail rather than a Gaussian tail in the regime ${\vert x-y\vert> t}$. Thus, we also need the following Carne--Varopoulos bound \cite[Corollaries 11 and 12]{Davies1993Large} to address the case ${\vert x-y\vert> t}$. 
	\begin{lemma}[Carne--Varopoulos bound]\label{Pro: tail}
		There exists $C:=C(d)>0$, such that for every point $x,y \in \mathscr{C}_\infty$, we have
		\[
		p^\omega(t, x, y) \leq \begin{cases}C \exp \left(-\frac{\mathsf{d}(x,y)^{2}}{C t}\right) & \text { if }\mathsf{d}(x,y)\leq t,  \\ C \exp \left(-\frac{\mathsf{d}(x,y)}{C}\left(1+\ln \frac{\mathsf{d}(x,y)}{t}\right)\right) & \text { if }\mathsf{d}(x,y)>t.\end{cases}
		\]
		Here, $\mathsf{d}(x,y)$ denotes the chemical distance on $\cltf$, i.e., minimum length of the open paths connecting $x$ and $y$.
	\end{lemma}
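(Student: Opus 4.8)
The plan is to observe that this is precisely the continuous-time Carne--Varopoulos estimate applied to the VSRW, which is a reversible jump process with jump rates bounded by $2d$; beyond quoting \cite[Corollaries~11 and 12]{Davies1993Large}, I would give a self-contained argument combining Carne's Chebyshev-polynomial method with the Poisson representation of the semigroup. \textbf{First I would reduce to a discrete chain.} Set $\kappa:=2d$, an upper bound for the total jump rate at any vertex of $\cltf$, and introduce $Q:=I+\kappa^{-1}\mathcal{L}_V$ acting on $\ell^2(\cltf)$ with the counting measure. Then $Q$ is a stochastic, self-adjoint transition matrix with holding probabilities, hence $\|Q\|_{\ell^2\to\ell^2}\le 1$, and $Q(x,z)=0$ unless $z=x$ or $z\sim x$ in $\cltf$, so $Q^n(x,y)=0$ whenever $n<\mathsf{d}(x,y)$. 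The identity $P_t=e^{t\mathcal{L}_V}=e^{-\kappa t}\sum_{n\ge 0}\frac{(\kappa t)^n}{n!}Q^n$ gives $p^\omega(t,x,y)=e^{-\kappa t}\sum_{n\ge 0}\frac{(\kappa t)^n}{n!}Q^n(x,y)$. \textbf{Next I would recall Carne's bound} $Q^n(x,y)\le 2\exp(-\mathsf{d}(x,y)^2/(2n))$ for $n\ge 1$: expand $z^n=2^{-n}\sum_{j=0}^n\binom{n}{j}\mathsf{T}_{|n-2j|}(z)$ in Chebyshev polynomials of the first kind, apply this to $Q$, use that $\mathsf{T}_k(Q)$ is a polynomial of degree $k$ in $Q$ so its $(x,y)$-entry vanishes for $k<\mathsf{d}(x,y)$, that $\|\mathsf{T}_k(Q)\|\le 1$ since $\mathrm{spec}(Q)\subset[-1,1]$, and finally Hoeffding's inequality for $\mathrm{Bin}(n,1/2)$.

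The remaining work is the Poisson-sum estimate. Inserting Carne's bound and writing $r:=\mathsf{d}(x,y)$, $\lambda:=\kappa t$, one gets $p^\omega(t,x,y)\le 2e^{-\lambda}\sum_{n\ge\max(1,r)}\frac{\lambda^n}{n!}e^{-r^2/(2n)}$. When $r\le\lambda$ — which contains the case $r\le t$ since $\kappa\ge 4$ — I would split the sum at $n=2\lambda$: for $n\le 2\lambda$ one has $e^{-r^2/(2n)}\le e^{-r^2/(4\lambda)}$ and the remaining Poisson weights sum to at most $1$, while for $n>2\lambda$ the Chernoff bound gives a Poisson right tail $\le e^{-\lambda/4}\le e^{-r^2/(4\lambda)}$; this yields the first (Gaussian) case with a constant $C=C(d)$. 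When $r>t$ I would treat the near range $t<r\le e^2\lambda$ by the same splitting, which still gives $Ce^{-r^2/(C\lambda)}\le Ce^{-r/C}$ while $1+\ln(r/t)$ stays bounded by a $d$-dependent constant there, and the far range $r>e^2\lambda$ by dropping $e^{-r^2/(2n)}\le 1$ and using $\mathbb{P}(\mathrm{Poisson}(\lambda)\ge r)\le\exp(-r(\ln(r/\lambda)-1))$; since $\ln(r/\lambda)>2$ one has $\ln(r/\lambda)-1>\tfrac12\ln(r/\lambda)$, and $1+\ln(r/t)=1+\ln\kappa+\ln(r/\lambda)$, so the bound is $\le\exp(-\tfrac{r}{C}(1+\ln(r/t)))$. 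Collecting the cases, absorbing $\kappa=2d$ and all numerical constants into one $C=C(d)$, and invoking $p^\omega(t,x,y)\le 1$ for small $r$, gives both asserted estimates.

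The step I expect to be the main obstacle is purely the bookkeeping in the Poisson-sum estimate: matching the Chernoff exponent $r(\ln(r/\lambda)-1)$ to the claimed form $\tfrac{r}{C}(1+\ln(r/t))$ uniformly in all parameters, and in particular covering the intermediate window $t<r\lesssim t$ where the Chernoff bound degenerates (which is why that window is routed through the Gaussian estimate instead). One must also check carefully that the holding probabilities in $Q$ do not spoil the finite-propagation property $Q^n(x,y)=0$ for $n<\mathsf{d}(x,y)$ — this is immediate, since self-loops do not change distances in $\cltf$ — and that the reversibility of $\mathcal{L}_V$ with respect to the counting measure (equivalently, symmetry of $\omega$) makes $Q$ self-adjoint on $\ell^2(\cltf)$, which is what Carne's argument requires.
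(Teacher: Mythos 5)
Your proposal is correct, but it diverges from the paper in a simple way: the paper gives no proof of this lemma at all, quoting it directly from Davies \cite[Corollaries~11 and 12]{Davies1993Large}, which state precisely these continuous-time Carne--Varopoulos bounds for symmetric bounded-range jump processes. Your reconstruction --- uniformization $Q=I+(2d)^{-1}\mathcal{L}_V$, the Poisson representation of $e^{t\mathcal{L}_V}$, Carne's Chebyshev-polynomial bound $Q^n(x,y)\le 2\exp\bigl(-\mathsf{d}(x,y)^2/(2n)\bigr)$ (valid here since the conductances are symmetric, so $Q$ is self-adjoint for the counting measure, and self-loops do not alter the chemical distance), followed by Chernoff estimates for the Poisson weights --- is essentially the standard derivation of Davies's result, and the bookkeeping you flag does close: in the window $t<\mathsf{d}(x,y)\le e^2\cdot 2dt$ the Gaussian half of your split can be empty (when $\mathsf{d}(x,y)>2\lambda$ with $\lambda=2dt$), but the Poisson tail alone still gives $\exp(-c\lambda)\le\exp(-c\,\mathsf{d}(x,y)/e^2)$ there while $1+\ln(\mathsf{d}(x,y)/t)$ stays bounded by a $d$-dependent constant, and in the far range the inequality $\ln(r/\lambda)-1>\tfrac12\ln(r/\lambda)$ absorbs the $\ln(2d)$ discrepancy between $\lambda$ and $t$, so both asserted forms follow with $C=C(d)$. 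What the citation buys the paper is brevity; what your argument buys is self-containedness and transparency about the constant: only the uniform bound $2d$ on the jump rates and the symmetry of $\omega$ are used, no percolation-specific input enters, and the chemical distance appears solely through the finite-propagation property $Q^n(x,y)=0$ for $n<\mathsf{d}(x,y)$.
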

	\begin{remark}
		Since the chemical distance can always be lower bounded by the $\ell_1$ norm, for every $\vert x - y \vert > t$, we have
		\begin{align*}
			\mathsf{d}(x,y) \geq \vert x - y\vert_1 \geq \vert x - y\vert > t.
		\end{align*}
		Using the Carne--Varopoulos bound, we  obtain
		\begin{equation}\label{eq.carne}
			\forall |x-y|>t,\qquad p^\omega(t, x, y) \leq C \exp \left(-\frac{|x-y|}{C}\right).
		\end{equation}
	\end{remark}

	With the two lemmas above, we obtain the following proposition for the term $I_1$.
	\begin{proposition}\label{pro:approximation_CLT}
		For each exponent $\delta>0$, there exists $C:=C(d,\mathfrak{p},\delta)>0$ such that on the event $\{y\in\cltf\}$, when $t>\mathcal{T}_{par,\delta}(y)$, we have
		\begin{equation}\label{equ:approximation_CLT}
			\sup_{f\in \Lip(\mathbb{R}^d)}\Bigg\vert \sum_{x: x + y\in\mathscr{C}_\infty}f(x)\left(p^\omega(t,x+y,y)-\theta(\mathfrak{p})^{-1}\bar{p}(t,x)\right)\Bigg\vert\le Ct^\delta .
		\end{equation}
		Here $\mathcal{T}_{par,\delta}(y)$ comes from Lemma~\ref{quantitative CLT}.
	\end{proposition}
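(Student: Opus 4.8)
The plan is to bound $I_1$ by splitting the lattice sum into the near region $\{|x| \le t\}$, where the quantitative local CLT of Lemma~\ref{quantitative CLT} applies, and the far region $\{|x| > t\}$, where both the random walk and the Gaussian have exponentially small mass. Since any $1$-Lipschitz $f$ on $\R^d$ can be normalized so that $f(0) = 0$ (the difference of the two measures has total mass... not quite zero, so this requires a small argument — see below), we have $|f(x)| \le |x|$, and it suffices to bound $\sum_{x}|x|\,\big|p^\omega(t,x+y,y) - \theta(\p)^{-1}\bar p(t,x)\big|$ together with the defect in total mass.

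First I would handle the near region. For $x$ with $|x| \le t$ and $x + y \in \cltf$, Lemma~\ref{quantitative CLT} gives
\begin{align*}
\big| p^\omega(t, x+y, y) - \theta(\p)^{-1}\bar p(t,x) \big| \le C t^{-\frac{d}{2} - (\frac12 - \delta)} \exp\Big(-\frac{|x|^2}{Ct}\Big).
\end{align*}
Multiplying by $|x|$ and summing over $x \in \Zd$, the sum $\sum_{x \in \Zd} |x|\, t^{-d/2} \exp(-|x|^2/(Ct))$ is comparable to the Gaussian integral $\int_{\R^d} |x|\, t^{-d/2} \exp(-|x|^2/(Ct))\,\d x = C t^{1/2}$. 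Hence the near-region contribution is at most $C t^{-\frac12 + \delta} \cdot t^{\frac12} = C t^{\delta}$, which is exactly the claimed bound. I would be a little careful here to note the sum over the lattice is controlled by the integral up to a constant (Riemann-sum comparison, using that $t$ is large), and that restricting to $x + y \in \cltf$ only removes terms, so the bound is preserved.

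Next I would dispatch the far region $|x| > t$. For the Gaussian part, $\int_{|x| > t} |x|\, \bar p(t,x)\,\d x$ decays like $\exp(-ct)$ for large $t$ since $t \gg \sqrt t$; more precisely one bounds it by $C t^{d/2+1}\exp(-t/(C))$ or similar, which is $\le C t^\delta$ for $t$ large. For the random-walk part, the Carne--Varopoulos bound \eqref{eq.carne} gives $p^\omega(t,x+y,y) \le C\exp(-|x|/C)$ for $|x| > t$, so $\sum_{|x| > t} |x|\, p^\omega(t,x+y,y) \le C \sum_{|x| > t} |x| \exp(-|x|/C) \le C\exp(-t/C)$, again negligible. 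Both pieces are therefore $\le C t^\delta$ once $t$ exceeds a deterministic threshold. I would also need to account for the total-mass defect: $\sum_{x+y \in \cltf} p^\omega(t,x+y,y) = 1$ while $\theta(\p)^{-1}\int_{\R^d}\bar p(t,x)\,\d x = \theta(\p)^{-1}$, so $f(0) = 0$ does not completely kill the constant term; however, replacing $f$ by $f - f(0)$ in the supremum only changes the expression by $|f(0)|\cdot\big|\,1 - \theta(\p)^{-1} + (\text{error from the restriction to }\cltf)\,\big|$, and this mass difference is itself controlled: the over-counting $\theta(\p)^{-1} - 1$ is cancelled against $\sum_{x+y \notin \cltf}\theta(\p)^{-1}\bar p(t,x)$ up to the local-CLT error already bounded, so it contributes at most another $C t^\delta$. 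Collecting the near region, the far region, and the mass-defect term, and taking $\mathcal{T}_{par,\delta}(y)$ large enough to absorb the deterministic thresholds (its tail bound \eqref{equ:4} is already in place), gives \eqref{equ:approximation_CLT}.

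The main obstacle I anticipate is the bookkeeping around the supremum over $\Lip(\R^d)$ and the normalization $f(0)=0$: one must be honest that the two measures in $I_1$ do not have equal mass (because of both the $\theta(\p)^{-1}$ factor and the restriction to $\cltf$), so the reduction to $|f(x)| \le |x|$ is not immediate and the leftover constant-term contribution must be estimated separately. Once that is set up cleanly, the rest is a routine Gaussian tail / Riemann-sum computation plus an invocation of Carne--Varopoulos, with no further surprises; the exponent $\delta$ in the final bound comes entirely from the $t^\delta$ improvement in the quantitative local CLT and the $t^{1/2}$ from integrating $|x|$ against the heat kernel.
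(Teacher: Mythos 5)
You take essentially the same route as the paper: split the lattice sum into the near region $\{|x| \le t\}$, handled by the quantitative local CLT of Lemma~\ref{quantitative CLT}, and the far region $\{|x| > t\}$, handled by the Carne--Varopoulos bound together with Gaussian tail decay, and reduce to $|f(x)| \le |x|$ via the normalization $f(0)=0$. The two regional estimates you outline are correct and match the paper's (the paper sums over dyadic annuli; your Riemann-sum comparison gives the same bounds).

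Where your write-up goes astray is the ``mass defect'' discussion. Your underlying worry is legitimate: the two measures appearing in $I_1$ do not have equal total mass, so a supremum over unnormalized $1$-Lipschitz $f$ would indeed be $+\infty$, and the normalization $f(0)=0$ cannot simply be taken for granted. But the fix you propose does not close. You would bound the constant contribution by $|f(0)|$ times the mass discrepancy, observing that the discrepancy is small; however $|f(0)|$ is unbounded over $\Lip(\R^d)$, so the product is not controllable. (You also assert that ``$f(0)=0$ does not completely kill the constant term,'' but it does, identically: the constant term is $f(0)$ multiplied by a fixed number, so setting $f(0)=0$ makes it vanish regardless of what the mass discrepancy is.) The correct resolution is cleaner and is what the paper's WLOG is doing implicitly: the quantity being bounded is $W_1\big(p^\omega(t,\cdot+y,y), \bar p(t,\cdot)\big)$, which compares two \emph{probability} measures of equal mass, so its Kantorovich-duality supremum is invariant under adding constants to $f$ and may be taken over $\{f \in \Lip(\R^d): f(0)=0\}$. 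The decomposition $W_1 \le I_1 + I_2$ of \eqref{equ:W1_split} is then the triangle inequality applied \emph{inside} that restricted supremum, so the constraint $f(0)=0$ is inherited by both sub-suprema. With this in place $|f(x)| \le |x|$, there is no dangling constant term, and the rest of your estimates go through unchanged.
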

	\begin{proof}
		Without loss of generality, we assume $y=f(0)=0$, which leads to ${\vert f(x)\vert\le \vert x\vert}$. We also use the shorthand $p^\omega(t,x):=p^\omega(t,x,0)$ in the proof. Then $I_1$ can be decomposed as 
		\begin{equation}\label{equ:dec_distance}
			\begin{aligned}
				&\sum_{x\in\mathscr{C}_\infty}f(x)\left(p^\omega(t,x)-\theta(\mathfrak{p})^{-1}\bar{p}(t,x)\right)\\
				&=\sum_{x\in\mathscr{C}_\infty,\vert x\vert\le t}f(x)\left(p^\omega(t,x)-\theta(\mathfrak{p})^{-1}\bar{p}(t,x)\right)\\
				&\qquad+\sum_{x\in\mathscr{C}_\infty, \vert x\vert>t}f(x)\left(p^\omega(t,x)-\theta(\mathfrak{p})^{-1}\bar{p}(t,x)\right).
			\end{aligned}
		\end{equation}  
		As stated, we focus on the long-term behavior when ${t>\mathcal{T}_{par,\delta}(0)}$. Then, Lemma~\ref{quantitative CLT} can be applied to the case $\vert x\vert\le t$: there exists a constant $C_1(d,\mathfrak{p},\delta)>0$ such that
		\begin{align*}
			&\sup_{f\in \Lip(\mathbb{R}^d)}\Bigg\vert \sum_{x\in\mathscr{C}_\infty, \vert x\vert\le t}f(x)(p^\omega(t,x)-\theta(\mathfrak{p})^{-1}\bar{p}(t,x))\Bigg\vert \\
			&\le\sum_{x\in\mathscr{C}_\infty, \vert x\vert\le t}C_1\vert x\vert t^{-\frac{d}{2}-(\frac{1}{2}-\delta)}\exp\left(-\frac{\vert x\vert^2}{C_1t}\right).
		\end{align*}
		We define annuli as follows:
		\begin{align*}
			A_k :=B_{k} \setminus B_{k-1},\quad \forall k \in \N,
		\end{align*}
		where $B_{-1} := \varnothing, B_k:= B(0,2^k)$ is the ball defined in Section~\ref{notation_percolation}. It follows that
		\begin{align*}
			&\sum_{x\in\mathscr{C}_\infty,\vert x\vert\le t}C_1\vert x\vert t^{-\frac{d}{2}-(\frac{1}{2}-\delta)}\exp\left(-\frac{\vert x\vert^2}{C_1t}\right)\\
			&\le C_1\sum_{k=0}^{\infty}\sum_{x\in\mathscr{C}_\infty\cap A_k} 2^k t^{-\frac{d}{2}-(\frac{1}{2}-\delta)}\exp\left(-\frac{(2^{k-1})^2}{C_1t}\right)\\
			&\le 2^{d+1} C_1 t^{\delta}\sum_{k=0}^{\infty}\left(\frac{2^{k-1}}{\sqrt{t}}\right)^{d+1}\exp\left(-\frac{1}{C_1}\left(\frac{2^{k-1}}{\sqrt{t}}\right)^2\right).
		\end{align*}
		Noting that when $x$ is large enough (depending on $C_1$), we have
		\begin{equation}\label{equ:absorb}
			x^{d+1}\exp\left(-\frac{x^2}{C_1}\right)\le \exp\left(-\frac{x^2}{2C_1}\right),
		\end{equation}
		which implies
		\begin{equation}\label{equ:W1_first1}
			\begin{aligned}
				&t^{\delta}\sum_{k=1}^{\infty}\frac{2^{k-1} }{\sqrt{t}}\left(\frac{2^{k-1}}{\sqrt{t}}\right)^{d}\exp\left(-\frac{1}{C_1}\left(\frac{2^{k-1}}{\sqrt{t}}\right)^2\right)\\
				&\le  C_1t^{\delta} \sum_{k=1}^{\infty}\frac{2^{k-1}}{\sqrt{t}}\exp\left(-\frac{1}{2C_1}\left(\frac{2^{k-1}}{\sqrt{t}}\right)^2\right)\\
				&\le C_1t^{\delta} \int_{0}^{\infty}\exp\left(-\frac{x^2}{2C_1}\right)\, \d x < C_1t^{\delta}.
			\end{aligned}
		\end{equation}
		In the last inequality, since the exponential function is decreasing, the discrete sum can be bounded by integration. This yields an upper bound for the case $\vert x\vert\le t$ in \eqref{equ:dec_distance}.
		
		\medskip
		For the case $|x-y|>t$, by \eqref{eq.carne} of Lemma~\ref{Pro: tail}, there exists a constant $C_2(d,\mathfrak{p})>0$ such that
		\begin{align*}
			&\sup_{f\in \Lip(\mathbb{R}^d)}\Bigg\vert \sum_{x\in\mathscr{C}_\infty, |x|>t}f(x)(p^\omega(t,x)-\theta(\mathfrak{p})^{-1}\bar{p}(t,x))\Bigg\vert \\
			&\le \sum_{k=\lfloor \log_2 t\rfloor }^{\infty}\sum_{x\in\mathscr{C}_\infty\cap A_k}\vert x\vert(p^\omega(t,x)+\theta(\mathfrak{p})^{-1}\bar{p}(t,x))\\
			&\le C_2\sum_{k=\lfloor \log_2 t\rfloor}^{\infty}(2^k)^{d+1}\exp\left(-\frac{2^{k-1}}{C_2}\right)+C_2t^{-\frac{d}{2}}\sum_{k=\lfloor \log_2 t\rfloor}^{\infty}(2^k)^{d+1}\exp\left(-\frac{(2^{k-1})^2}{2\bar{\sigma}^2t}\right).
		\end{align*}
		For the two terms in the last equation mentioned above, the same argument as in \eqref{equ:absorb} and \eqref{equ:W1_first1} can be applied. Therefore, when $k$ is sufficiently large, it follows that 
		\begin{align}
			\sum_{k=\lfloor \log_2 t\rfloor }^{\infty}(2^k)^{d+1}\exp\left(-\frac{2^{k-1}}{C_2}\right) &\le C_2\exp\left(-\frac{t}{2C_2}\right), \\
			t^{-\frac{d}{2}}\sum_{k=\lfloor \log_2 t\rfloor}^{\infty}(2^k)^{d+1}\exp\left(-\frac{(2^{k-1})^2}{2\bar{\sigma}^2t}\right)&\le C_2t^{-\frac{1}{2}}\exp\left(-\frac{t^2}{4\bar{\sigma}^2}\right)\label{eq.W1_first3}.
		\end{align}
		
		Combining \eqref{equ:W1_first1}-\eqref{eq.W1_first3}, we conclude that when $t>\mathcal{T}_{par,\delta}(0)$,  there exists a finite constant $C(d,\mathfrak{p},\delta)>0$ such that
		$$\sup_{f\in \Lip(\mathbb{R}^d)}\Bigg\vert \sum_{x\in\mathscr{C}_\infty}f(x)(p^\omega(t,x)-\theta(\mathfrak{p})^{-1}\bar{p}(t,x))\Bigg\vert\le Ct^\delta,$$
		which completes the proof.
	\end{proof}

	\subsection{Approximation of cluster density}
	In this subsection, we establish an upper bound for $I_2$ defined in \eqref{eq.defI1I2} in the following proposition.
	
	\begin{proposition}\label{pro:approximation_cluster}
		There exist finite positive constants $C_1$ and $s_1$ that depend on $d,\p$ such that, for every $t>0$, the following holds:
		\begin{multline}\label{equ:Orlicz}
			\sup_{f\in \Lip(\mathbb{R}^d)} \Bigg\vert \sum_{x: x + y\in\mathscr{C}_\infty}f(x)\theta(\mathfrak{p})^{-1}\bar{p}(t,x)-\int_{\mathbb{R}^d}f(x)\bar{p}(t,x)\, \d x\Bigg\vert \\
			\le 
			\left\{ 		
			\begin{array}{ll}
				\mathcal{O}_{s_1}\left(C_1\lfloor\frac{1}{2}\log_3t \rfloor\right),  & d=2,  \\ 
				\mathcal{O}_{s_1}(C_1), & d\ge 3.
			\end{array}\right. 
		\end{multline}

		Moreover, for each $\delta >0$, there exist finite positive constants $C_2$ and $s_2$ that depend on $d,\p,\delta$ and a nonnegative $\omega$-measurable random variable $\mathcal{T}_{dense,\delta}(y)$ satisfying
		\begin{equation}\label{equ:random_time2}
			\forall T\ge 0, \qquad \P_{\p}\Ll(\mathcal{T}_{dense,\delta}(y)\ge T\Rr)\le C_2 \exp\left(-\frac{T^{s_2}}{C_2}\right),
		\end{equation}
		such that on the event $\{y\in\cltf\}$, for every $t>\mathcal{T}_{dense,\delta}(y)$, we have 
		\begin{equation}\label{equ:scale}
			\sup_{f\in \Lip(\mathbb{R}^d)}\Bigg\vert \sum_{x: x + y\in\mathscr{C}_\infty}f(x)\theta(\mathfrak{p})^{-1}\bar{p}(t,x)-\int_{\mathbb{R}^d}f(x)\bar{p}(t,x)\, \d x\Bigg\vert\le C_2t^\delta.
		\end{equation}
	\end{proposition}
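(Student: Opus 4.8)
The plan is to compare the cluster sum with the Gaussian integral \emph{scale by scale}, using the concentration of the cluster density on triadic boxes as the only probabilistic input; the dichotomy between $d=2$ and $d\ge 3$ will come out of a geometric series that is critical exactly at $d=2$. Throughout write $g(x):=f(x)\bar{p}(t,x)$. After a translation we may take $y=0$, and (as in the proof of Proposition~\ref{pro:approximation_CLT}) we may take $f(0)=0$, so that $|f(x)|\le|x|$ and $|\nabla g(t,x)|\le \bar{p}(t,x)\big(1+|x|^2/(\bar{\sigma}^2 t)\big)$, whence $\int_{\R^d}|\nabla g(t,x)|\,\d x\le C$ uniformly in $t\ge 1$. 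We must bound $\theta(\p)^{-1}\sum_{x\in\cltf}g(x)-\int_{\R^d}g$ uniformly over such $f$; translation invariance of $\P_\p$ then transfers the bound and the minimal scale to general $y$.

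First I would replace $g$ by its average $g^{(0)}$ over unit cubes $\{Q_z\}_{z\in\Zd}$: since $\int g^{(0)}=\int g$ and the oscillation of $g$ on $Q_z$ is at most $C\sup_{Q_z}|\nabla g|$, the error $\theta(\p)^{-1}\sum_{z\in\cltf}\big(g-g^{(0)}\big)(z)$ is bounded \emph{deterministically} by $C\int|\nabla g|\le C$. Next, for $k\ge 0$ let $\mathcal{D}_k$ be the partition of $\R^d$ into triadic cubes of side $3^k$, let $g^{(k)}$ be the average of $g$ over the cubes of $\mathcal{D}_k$ (writing $\langle h\rangle_D:=|D|^{-1}\int_D h$), and put $K:=\lfloor\tfrac12\log_3 t\rfloor$, so $3^K\asymp\sqrt t$. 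Telescoping $g^{(0)}=g^{(K)}+\sum_{k=1}^{K}\big(g^{(k-1)}-g^{(k)}\big)$ splits the quantity into a coarsest term $\theta(\p)^{-1}\sum_{x\in\cltf}g^{(K)}(x)-\int g^{(K)}$ and the $K$ increments. The coarsest term equals $\sum_{D\in\mathcal{D}_K}\langle g\rangle_D\big(\theta(\p)^{-1}|\cltf\cap D|-|D|\big)$; using $|\langle g\rangle_D|\le\sup_D|f|\,\langle\bar{p}(t,\cdot)\rangle_D$, the Gaussian moment bounds, the density concentration $|\cltf\cap D|-\theta(\p)|D|\le\mathcal{O}_s\big(C|D|^{1/2}\big)$ of \cite[Proposition~14]{dario2021quantitative} and subadditivity \eqref{equ:Orlicz_sum}, it is $\mathcal{O}_s\big(Ct^{1/2-d/4}\big)$, hence $\mathcal{O}_s(C)$ for all $d\ge 2$.

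The core is the level-$k$ increment. With $D\in\mathcal{D}_k$ the parent of $D'\in\mathcal{D}_{k-1}$, the function $h_k:=g^{(k-1)}-g^{(k)}$ is constant on $D'$, equal to $c_{D'}:=\langle g\rangle_{D'}-\langle g\rangle_D$, and $\int_D h_k=0$ for every $D\in\mathcal{D}_k$; hence the exact identity
\[
\theta(\p)^{-1}\sum_{z\in\cltf}h_k(z)-\int_{\R^d}h_k=\sum_{D'\in\mathcal{D}_{k-1}}c_{D'}\big(\theta(\p)^{-1}|\cltf\cap D'|-|D'|\big).
\]
Applying the density concentration $\theta(\p)^{-1}|\cltf\cap D'|-|D'|\le\mathcal{O}_s\big(C\,3^{kd/2}\big)$ on each side-$3^{k-1}$ cube, bounding $|c_{D'}|\le C\,3^k\sup_D|\nabla g|$ by the oscillation of $g$, and summing by the (weighted) subadditivity \eqref{equ:Orlicz_sum}---legitimate since $\sum_{D'}|c_{D'}|<\infty$ by Gaussian decay---together with the Riemann-sum comparison $\sum_{D\in\mathcal{D}_k}3^{kd}\sup_D|\nabla g|\le C\int|\nabla g|\le C$ giving $\sum_{D'\in\mathcal{D}_{k-1}}|c_{D'}|\le C\,3^{k(1-d)}$, one obtains that the level-$k$ increment is $\mathcal{O}_s\big(C\,3^{k(1-d/2)}\big)$. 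Summing over $1\le k\le K$ via \eqref{equ:Orlicz_sum}: for $d\ge3$ the exponent $1-d/2\le-\tfrac12$ makes the series converge, yielding $\mathcal{O}_{s_1}(C_1)$; for $d=2$ the exponent vanishes and one collects the factor $K=\lfloor\tfrac12\log_3 t\rfloor$, yielding $\mathcal{O}_{s_1}\big(C_1\lfloor\tfrac12\log_3 t\rfloor\big)$. This is \eqref{equ:Orlicz}. Finally \eqref{equ:scale} follows by converting \eqref{equ:Orlicz} into a minimal scale: applied along $t=3^{2n}$ it gives $3^{-2n\delta}\cdot(\text{LHS})\le\mathcal{O}_{s_1}\big(C\,3^{-n\delta}\big)$ (as $n\,3^{-2n\delta}\lesssim 3^{-n\delta}$), so Lemma~\ref{lem:scale} produces an $\omega$-measurable $\mathcal{T}_{dense,\delta}(y)\le\mathcal{O}_{s_2}(C_2)$ beyond which the left-hand side of \eqref{equ:scale} is $\le C_2 t^\delta$ (general $t$ being compared to the nearest power of $9$, as for $\mathcal{T}^{(1)}_\delta$ in Proposition~\ref{W_1}), and \eqref{equ:random_time2} is the $\mathcal{O}_{s_2}$-to-tail conversion.

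The main obstacle is precisely the level-$k$ bound: making the cluster-density concentration hold on \emph{every} triadic box of \emph{every} scale with one stochastic-integrability exponent $s$ and the correct deterministic factor $3^{kd/2}$, and then glueing the genuinely dependent fluctuations of the $O(\log t)$ scales through Orlicz subadditivity without destroying the geometric structure. The dimensional dichotomy is exactly the balance between the fluctuation growth $3^{kd/2}$ and the decay $3^{k(1-d)}$ forced by the smoothness of the Gaussian weight, the two being in equilibrium at $d=2$. A secondary technicality is justifying, uniformly in $t$, the Riemann-sum/integral comparisons and the convergence of the infinite sums over $\mathcal{D}_k$ at the top scale $3^K\asymp\sqrt t$.
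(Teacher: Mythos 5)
Your proposal is essentially correct, but it takes a genuinely different route from the paper. The paper decomposes the discrepancy as in \eqref{eq.densityClusterDecom} into a cluster-density term and a Riemann-sum term, then treats the cluster-density term at a \emph{single} triadic scale $3^m\asymp\sqrt t$ by pairing the $\underline H^{-1}(\square_m(z))$ concentration of $\mathbf{1}_{\cltf}-\theta(\p)$ (Lemma~\ref{Lem:H}) against the explicitly computed $\underline H^1$-norm of $f\,\bar p(t,\cdot)$ on each block \eqref{eq.fpH1}--\eqref{eq.L3}. You instead run a multiscale coarse-graining: replace $g=f\bar p$ by successive triadic averages up to scale $3^K\asymp\sqrt t$, telescope, and control each increment by the \emph{volume-count} concentration $|\cltf\cap D|-\theta(\p)|D|\le\mathcal O_s(C|D|^{1/2})$. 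Both bookkeepings land on the same geometric balance (fluctuation $3^{kd/2}$ against smoothing decay $3^{k(1-d)}$) and the same $d=2$ versus $d\ge 3$ dichotomy, and your conversion to a minimal scale via Lemma~\ref{lem:scale} matches the paper's.

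Two remarks rather than gaps. First, your probabilistic input is different from the paper's: the $\underline H^{-1}$ bound in Lemma~\ref{Lem:H} is not literally the same as a CLT-scale volume concentration $\mathcal O_s(C|D|^{1/2})$, and one should verify that the cited Proposition~14 of \cite{dario2021quantitative} (or Penrose's CLT for cluster size) genuinely furnishes that rate at every triadic scale with a common stochastic integrability $s$; in $d=2$ a possible logarithmic correction in the cube-level concentration would inflate your bound from $\mathcal O_{s_1}(CK)$ to $\mathcal O_{s_1}(CK^2)$, which still yields \eqref{equ:scale} but misses the sharper form of \eqref{equ:Orlicz}. Second, no union bound is actually needed in your argument: the weighted sums over blocks $D'$ within a level and over levels $k\le K$ are both handled by ordinary Orlicz subadditivity \eqref{equ:Orlicz_sum} applied to the deterministic coefficients $|c_{D'}|$ (uniform over $f\in\Lip(\R^d)$), which is precisely how the paper sums over $z\in 3^m\Zd$ as well, so your ``main obstacle'' is in fact no harder than the corresponding step in the paper's proof.
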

	
	The main ingredient to prove Proposition~\ref{pro:approximation_cluster} is the convergence of density that
	\begin{align*} 
		{\frac{\vert \cltf \cap B(0,r)\vert}{\vert B(0,r)\vert} \xrightarrow{r \to \infty} \theta(\p)},
	\end{align*}
	which is derived from ergodic theory. To obtain a quantitative result, the convergence rate established in \cite[Proposition~14]{dario2021quantitative} is crucial. We begin by introducing some necessary notation and stating this result.
	A triadic cube in $\Zd$ is of type
	\begin{align*}
		\square_m(z):=\mathbb{Z}^d\cap\left(z+\Big(-\frac{3^m}{2},\frac{3^m}{2}\Big)^d\right),\qquad z\in 3^m\mathbb{Z}^d,\,m\in\mathbb{N}.
	\end{align*}
	For every bounded set $U \subset \Zd$ and $p\in[1,\infty)$, we define the $L^p(U)$-norm and the normalized $\underline{L}^p(U)$-norm as 
	\begin{align*}
		\|u\|_{L^p(U)} :=\left(\sum_{x \in U}|u|^p(x)\right)^{\frac{1}{p}}, \qquad
		\|u\|_{\underline{L}^p(U)} :=\left(\frac{1}{|U|}\sum_{x \in U} |u|^p(x)\right)^{\frac{1}{p}}.
	\end{align*}
	Let $\D u$ denote the finite difference vector of $u$, defined by
	\begin{align*}
		\D u (x) := \big(u(x+e_1)- u(x), u(x+e_2)- u(x), \cdots u(x+e_d)- u(x)\big)^\top,
	\end{align*}
	where $\{e_i\}_{i=1}^n$ is the canonical basis of $\R^d$.
	The normalized discrete Sobolev norm $\underline{H}^1(U)$ is defined by 
	\begin{align}\label{eq.defSobolev}
		\|u\|_{\underline{H}^1(U)} :=|U|^{-\frac{1}{d}}\|u\|_{\underline{L}^2(U)}+\|\D u\|_{\underline{L}^2(U)},
	\end{align}
	and we define its dual norm as 
	\begin{align}\label{eq.defH-1}
		\|u\|_{\underline{H}^{-1}(U)} :=\sup_{\|v\|_{\underline{H}^1(U)}\le 1}\frac{1}{|U|}\sum_{x \in U} u(x)v(x).
	\end{align}
	Since $\underline{H}^{-1}$-norm effectively captures the spatial cancellation, the following lemma provides a quantitative measure of the concentration rate for the cluster density. For details, see \cite[Step 2 of the proof for Proposition 14]{dario2021quantitative}.
	\begin{lemma}\label{Lem:H}
		For each $z\in\mathbb{Z}^d$ and $m\in\mathbb{N}$, the following holds:
		\begin{align}
			\|\mathbf{1}_{\cltf}-\theta(\p)\|_{\underline{H}^{-1}\left(\square_m(z)\right)}\le 
			\left\{\begin{array}{ll}
				\mathcal{O}_s(C m),  & d=2,  \\ 
				\mathcal{O}_s(C), & d\ge 3,
			\end{array}\right. 
		\end{align}
		where $C$ and $s$ are finite positive constants depending on $d,\p$.
	\end{lemma}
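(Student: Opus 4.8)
The plan is to reduce the bound on $\|\mathbf{1}_{\cltf} - \theta(\p)\|_{\underline{H}^{-1}(\square_m(z))}$ to a multiscale sum over triadic subcubes of the \emph{spatial averages} of the stationary field $g := \mathbf{1}_{\cltf} - \theta(\p)$, and then to establish a central‑limit‑type bound on each such average. By stationarity of $\P_\p$ we may take $z=0$ throughout. I would begin with the deterministic multiscale Poincaré inequality for the dual Sobolev norm on triadic cubes (see \cite[Section~1.5]{AKMbook}), which, after dualisation, gives for every $u:\square_m\to\R$
\begin{equation}\label{eq.sketch.MP}
\|u\|_{\underline{H}^{-1}(\square_m)} \le C\Bigg( 3^m\,|(u)_{\square_m}| + \sum_{n=0}^{m-1} 3^n \Big( 3^{-d(m-n)} \sum_{z' \in 3^n\ZZ^d \cap \square_m} |(u)_{\square_n(z')}|^2 \Big)^{1/2} \Bigg),
\end{equation}
where $(u)_Q := |Q|^{-1}\sum_{x\in Q}u(x)$. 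Hence Lemma~\ref{Lem:H} follows once one proves the uniform estimate
\begin{equation}\label{eq.sketch.key}
|(g)_{\square_n(z')}| \le \mathcal{O}_s\big( C\, 3^{-nd/2} \big), \qquad n\in\N,\ z'\in 3^n\ZZ^d,
\end{equation}
for suitable $C,s$ depending only on $d,\p$. Indeed, feeding \eqref{eq.sketch.key} into \eqref{eq.sketch.MP}, squaring, summing the $3^{d(m-n)}$ subcubes at scale $n$ via the subadditivity \eqref{equ:Orlicz_sum} of $\mathcal{O}_s(\cdot)$, taking square roots, and using \eqref{equ:multiply}, gives $3^n(\cdots)^{1/2}\le\mathcal{O}_s(C\,3^{n(1-d/2)})$; summing over $0\le n\le m$ (the $n=m$ term being $3^m|(g)_{\square_m}|\le\mathcal{O}_s(C\,3^{m(1-d/2)})$) produces, again by \eqref{equ:Orlicz_sum}, a convergent geometric series of size $\mathcal{O}_s(C)$ when $d\ge3$, and a sum of $m+1$ terms each $\mathcal{O}_s(C)$ when $d=2$ --- precisely the two cases of the statement.

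It remains to prove \eqref{eq.sketch.key}, which is the core of the argument; the difficulty is that $\mathbf{1}_{\cltf}(x)$ is not a local function of $\omega$, so it must be localised. For $\ell\ge1$ put $E_\ell(x):=\{x\longleftrightarrow\partial B(x,\ell)\}$, a function of the edges inside $B(x,\ell)$; since a vertex of $\cltf$ is connected arbitrarily far, $\mathbf{1}_{E_\ell(x)}\ge\mathbf{1}_{\cltf}(x)$ and
\begin{equation*}
\mathbf{1}_{E_\ell(x)} - \mathbf{1}_{\cltf}(x) = \mathbf{1}_{F_\ell(x)}, \qquad F_\ell(x) := \{ x \longleftrightarrow \partial B(x,\ell),\ x \not\longleftrightarrow \infty \}.
\end{equation*}
The exponential decay of the radius of a finite open cluster, valid for all $\p>\p_c(d)$ (see \cite[Chapter~8]{grimmett1999}), gives $\P_\p(F_\ell(x))\le Ce^{-c\ell}$ and hence $0\le\theta_\ell-\theta(\p)\le Ce^{-c\ell}$ with $\theta_\ell:=\P_\p(E_\ell(0))$. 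Averaging the identity $g(x)=(\mathbf{1}_{E_\ell(x)}-\theta_\ell)+(\theta_\ell-\theta(\p))-\mathbf{1}_{F_\ell(x)}$ over $x\in\square_n(z')$ with $\ell=\ell_n$ of order $\log 3^n$ chosen so that $e^{-c\ell_n}\le 3^{-nd}$, the constant term is a negligible $\le C3^{-nd}$, and the nonnegative term $|\square_n|^{-1}\sum_x\mathbf{1}_{F_{\ell_n}(x)}$ --- after a further dyadic‑in‑scale localisation of the nonlocal event $\{x\not\longleftrightarrow\infty\}$, writing $\mathbf{1}_{F_\ell(x)}\le\sum_{j\ge0}\mathbf{1}_{\{ x\longleftrightarrow\partial B(x,2^j\ell),\, x\not\longleftrightarrow\partial B(x,2^{j+1}\ell) \}}$ with each summand $2^{j+1}\ell$‑local and of probability $\le Ce^{-c2^j\ell}$ --- can be shown to be $\mathcal{O}_s(C\,3^{-nd/2})$ by summing the contributions over $j$.

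The main term $|\square_n|^{-1}\sum_{x\in\square_n(z')}(\mathbf{1}_{E_{\ell_n}(x)}-\theta_{\ell_n})$ is a centred spatial average of uniformly bounded functions each measurable in a ball of radius $\ell_n$. For this I would tile $\square_n(z')$ by cubes of side $L\asymp 3^{n/2}$ (so $L\gg\ell_n$), split them into $2^d$ colour classes of mutually independent cubes, and apply a Bernstein inequality within each class. The point that avoids any loss of exponent is that the covariances are \emph{summable uniformly in $\ell$}: using the localisation once more, $\operatorname{Cov}(\mathbf{1}_{E_\ell(0)},\mathbf{1}_{E_\ell(y)})\le C\big(e^{-c|y|}+e^{-c\ell}\,\mathbf{1}_{\{|y|\le 2\ell\}}\big)$, whose sum over $y\in\ZZ^d$ is bounded independently of $\ell$; consequently the variance of the average is $O(3^{-nd})$ and, with the choice $L\asymp 3^{n/2}$, the Bernstein bound yields exactly $\mathcal{O}_s(C\,3^{-nd/2})$. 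Collecting the three contributions through \eqref{equ:Orlicz_sum} gives \eqref{eq.sketch.key}.

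The delicate point --- the main obstacle --- is this last concentration step: unlike in the i.i.d.\ homogenisation setting, $\mathbf{1}_{\cltf}$ is genuinely nonlocal, so the concentration of its spatial averages has to be routed through the localisation above; this rests on the nontrivial exponential decay of truncated connectivities throughout the supercritical phase (available via Grimmett--Marstrand renormalisation) and on the uniform‑in‑$\ell$ summability of the covariances, and the accounting of the localisation errors must be carried out carefully enough not to degrade the central‑limit scaling $3^{-nd/2}$. The factor $m$ in dimension two is not an artefact but intrinsic: it is the value at $d=2$ of the borderline‑divergent sum $\sum_{n\le m}3^{\,n(1-d/2)}$, and reflects the fact that the $\underline{H}^{-1}$‑energy on a box of side $3^m$ of a white‑noise‑type stationary field is of order $m$ when $d=2$ and $O(1)$ when $d\ge3$.
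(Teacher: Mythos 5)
The paper itself does not prove this lemma: it quotes it directly from Step~2 of the proof of Proposition~14 in \cite{dario2021quantitative}, so there is no internal argument to compare against. Your reconstruction follows essentially the same strategy as that cited source: the multiscale Poincar\'e inequality for $\underline{H}^{-1}(\square_m)$ reduces the statement to the CLT-scale concentration $|(\mathbf{1}_{\cltf}-\theta(\p))_{\square_n(z')}|\le \mathcal{O}_s(C3^{-nd/2})$ on triadic subcubes, and the $d=2$ versus $d\ge3$ dichotomy is exactly the borderline behaviour of $\sum_{n\le m}3^{n(1-d/2)}$, as you observe (note also that in the discrete setting the $n=0$ term of your sum is precisely $\|u\|_{\underline{L}^2(\square_m)}$, which reconciles your form of the inequality with the standard one in \cite{AKMbook}). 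The probabilistic inputs are the right ones: localization of $\mathbf{1}_{\cltf}$ through $\{x\leftrightarrow\partial B(x,\ell)\}$, exponential decay of the truncated radius in the whole supercritical phase (Grimmett--Marstrand), and a colored block/Bernstein argument, with the covariance bound following from the same localization by comparing to $E_{|y|/3}$ on disjoint balls. The one step that must not be shortcut is the error field $\mathbf{1}_{F_{\ell_n}}$: a plain union bound (``no bad site in $\square_n$ except with probability $3^{-nK}$'') does not yield an $\mathcal{O}_s(C3^{-nd/2})$ bound, since the Orlicz statement requires tail control at all deviation levels $\lambda$ up to $3^{nd/2}$, where a polynomially small exceptional probability is insufficient; your dyadic-in-$j$ localization with per-scale Chernoff bounds over independent blocks of side $\sim 2^{j+1}\ell_n$ does close this and should be carried out in full. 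With that caveat, the proposal is a sound, self-contained route to the lemma, matching in spirit the proof the paper delegates to \cite{dario2021quantitative}.
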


	\begin{proof}[Proof of Proposition~\ref{pro:approximation_cluster}]
		Without loss of generality, we assume $y=f(0)=0$, which leads to $|f(x)|\le |x|$.  For the term $I_2$ in \eqref{eq.defI1I2}, the triangle inequality implies 
		
		\begin{multline}\label{eq.densityClusterDecom}
			\Bigg\vert \sum_{x\in \mathscr{C}_\infty}f(x)\theta(\mathfrak{p})^{-1}\bar{p}(t,x)-\int_{\mathbb{R}^d}f(x)\bar{p}(t,x)\, \d x\Bigg\vert\\
			\le\theta(\p)^{-1}\Bigg\vert\sum_{ x \in \mathbb{Z}^d}f(x)\bar{p}(t,x)\left(\mathbf{1}_{\{x\in \cltf\}}-\theta(\p)\right)\Bigg\vert
			+\Bigg\vert \sum_{ x \in \mathbb{Z}^d}f(x)\bar{p}(t,x)-\int_{\mathbb{R}^d}f(x)\bar{p}(t,x)\,\d x\Bigg\vert.
		\end{multline}

		We first address the second term by decomposing $\mathbb{R}^d$ into a union of unit cubes $\square_0(x)$ centered at $x\in\mathbb{Z}^d$. 
		\begin{equation}\label{equ:expectation_B}
			\begin{aligned}
				&\Bigg \vert \sum_{x\in\mathbb{Z}^d}f(x)\bar{p}(t,x)-\int_{\mathbb{R}^d}f(x)\bar{p}(t,x)\,\d x\Bigg\vert\\
				\le& \sum_{x\in\mathbb{Z}^d} \Big\vert f(x)\bar{p}(t,x) -\int_{\square_0(x)}f(u)\bar{p}(t,u)\,\d u\Big\vert\\
				\le &\sum_{x\in\mathbb{Z}^d}\vert f(x)\vert \int_{\square_0(x)}\vert\bar{p}(t,x)-\bar{p}(t,u)\vert \,\d u+\sum_{x\in\mathbb{Z}^d}\int_{\square_0(x)}\vert f(x)-f(u)\vert\bar{p}(t,u)\,\d u.
			\end{aligned}
		\end{equation}

		For the first term on the {\rhs} of \eqref{equ:expectation_B}, the mean-value theorem and $1$-Lipschitz property of $f$ imply
		\begin{equation*}
			\sum_{x\in\mathbb{Z}^d}\vert f(x)\vert \int_{\square_0(x)}\vert\bar{p}(t,x)-\bar{p}(t,u)\vert \,\d u\le \sum_{x\in\mathbb{Z}^d}\vert x\vert\int_{\square_0(x)}\vert  (x-u) \cdot \nabla\bar{p}(t,\xi(u))  \vert \,\d u,
		\end{equation*}
		where $\xi(u)$ denotes the intermediate point. Since $x,u$ and $\xi(u)$ lie in the same unit cube, we have
		\begin{equation*}
			\sum_{x\in\mathbb{Z}^d}\vert x\vert\int_{\square_0(x)}\vert  (x-u) \cdot \nabla\bar{p}(t,\xi(u))  \vert \,\d u\le  C \sum_{x\in\mathbb{Z}^d} \vert x\nabla \bar{p}(t,x)\vert \le C,
		\end{equation*}
		where the last inequality follows from a calculation similar to that in the proof of Proposition~\ref{pro:approximation_CLT}. Thus,
		\begin{equation}\label{equ:expectation_B1}
			\sum_{x\in\mathbb{Z}^d}\vert f(x)\vert \int_{\square_0(x)}\vert\bar{p}(t,x)-\bar{p}(t,u)\vert \,\d u\le C.
		\end{equation}
		
		For the second term on the {\rhs} of \eqref{equ:expectation_B}, since $f$ is $1$-Lipschitz and $x,u$ belong to the same unit cube, we can obtain
		\begin{equation}\label{equ:expectation_B2}
			\begin{aligned}
				\sum_{x\in\mathbb{Z}^d}\int_{\square_0(x)}\vert f(x)-f(u)\vert\bar{p}(t,u)\,\d u \le  C\sum_{x\in\mathbb{Z}^d}\int_{\square_0(x)}\bar{p}(t,u)\,\d u = C.
			\end{aligned}
		\end{equation}
		Therefore, \eqref{equ:expectation_B1} and \eqref{equ:expectation_B2} yield a constant bound for the second term on the {\rhs} of \eqref{eq.densityClusterDecom}
		\begin{align}\label{equ:expectation_B_T2}
			\Bigg\vert \sum_{ x \in \mathbb{Z}^d}f(x)\bar{p}(t,x)-\int_{\mathbb{R}^d}f(x)\bar{p}(t,x)\,\d x\Bigg\vert \leq C.
		\end{align}
		
		\smallskip
		
		Now we turn to the first term on the {\rhs} of \eqref{eq.densityClusterDecom} and decompose it as the sum over the triadic cubes $\square_m(z)$ with center at $z\in 3^m\mathbb{Z}^d$
		\begin{equation}\label{eq.densityClusterKey}
			\begin{aligned}
				&\sup_{f\in \Lip(\mathbb{R}^d)} \Bigg\vert\sum_{x\in\mathbb{Z}^d}f(x)\bar{p}(t,x)\left(\mathbf{1}_{\{x\in \cltf\}}-\theta(\p)\right)\Bigg\vert\\
				&\le\sup_{f\in \Lip(\mathbb{R}^d)}\sum_{z\in 3^m\mathbb{Z}^d} \Bigg\vert\sum_{x\in\square_m(z)}f(x)\bar{p}(t,x)\left(\mathbf{1}_{\{x\in \cltf\}}-\theta(\p)\right)\Bigg\vert\\
				&\le 3^{dm} \sum_{z\in 3^m\mathbb{Z}^d} \|\mathbf{1}_{\cltf}-\theta(\p)\|_{\underline{H}^{-1} \left(\square_m(z)\right)} \Big(\sup_{f\in \Lip(\mathbb{R}^d)} \|f\bar{p}(t,\cdot)\|_{\underline{H}^1\left(\square_m(z)\right)}\Big),
			\end{aligned}
		\end{equation}
		where we use the definition \eqref{eq.defH-1} in the last inequality. It remains to calculate the $\underline{H}^1$-norm of $f \bar{p}(t,\cdot)$. It follows from \eqref{eq.defSobolev} and $1$-Lipschitz property of $f$ that 
		\begin{equation}\label{eq.fpH1}
			\begin{aligned}
				&\|f \bar{p}(t,\cdot)\|_{\underline{H}^1\left(\square_m(z)\right)}\\
				&=3^{-\frac{md}{2}}\left(3^{-m}\|f \bar{p}(t,\cdot)\|_{L^2(\square_m(z))}+\|\mathcal{D} (f \bar{p}(t,\cdot))\|_{L^2(\square_m(z))}\right)\\
				&\le 3^{-\frac{md}{2}}\left(3^{-m}\|f \bar{p}(t,\cdot)\|_{L^2(\square_m(z))}+\|\bar{p}(t,\cdot)\|_{L^2(\square_m(z))}+\|f\nabla \bar{p}(t,\cdot)\|_{L^2(\square_m(z))}\right).
			\end{aligned}
		\end{equation}
		
		To control the three terms in the last line of \eqref{eq.fpH1}, the following elementary estimate is useful for every $g\in L^\infty(\square_m(z))$
		\begin{align*}
			\| g\bar{p}(t,\cdot) \|_{L^2(\square_m(z))}\leq \| g^2\bar{p}(t,\cdot) \|^{\frac{1}{2}}_{L^\infty(\square_m(z))} \| \bar{p}(t,\cdot) \|^{\frac{1}{2}}_{L^1(\square_m(z))}<\| g^2\bar{p}(t,\cdot) \|^{\frac{1}{2}}_{L^\infty(\square_m(z))},
		\end{align*}
		where the last inequality follows from $\| \bar{p}(t,\cdot) \|_{L^1(\square_m(z))} < 1$ for every $z\in 3^m\mathbb{Z}^d$.
		
		In particular, together with $\vert f(x) \vert \leq \vert x \vert$ and $\vert \nabla \bar{p}(t,x) \vert \leq C t^{-1}\vert x\vert\bar{p}(t,x)$, we can obtain
		\begin{align}
			3^{-m}\|f \bar{p}(t,\cdot)\|_{L^2(\square_m(z))}&\le 3^{-m}\sqrt{\max_{x\in\square_m(z)}|x|^2\bar{p}(t,x)}\label{eq.L1'},\\
			\|\bar{p}(t,x)\|_{L^2(\square_m(z))}&\le\sqrt{\max_{x\in\square_m(z)}\bar{p}(t,x)}\label{eq.L2'},\\
			\|f\nabla \bar{p}(t,\cdot)\|_{L^2(\square_m(z))}&\le \sqrt{\max_{x\in\square_m(z)}|x|^4 t^{-2}\bar{p}(t,x)}.\label{eq.L3'}
		\end{align}

		To control the maximum terms in the three displays above, we use the following observation
		\begin{equation}\label{eq.max_compare}
			\forall 3^m\le t^{\frac{1}{2}}, z\in3^m\mathbb{Z}^d,\qquad \max_{x\in\square_m(z)}\bar{p}(t,x)\le C\bar{p}(t,z).
		\end{equation}
		Inserting \eqref{eq.max_compare} back to \eqref{eq.L1'}-\eqref{eq.L3'}, we have
		\begin{align}
			3^{-m}\|f \bar{p}(t,\cdot)\|_{L^2(\square_m(z))}&\le C(3^{-m} \vert z \vert + 1)t^{-\frac{d}{4}}\exp\left(-\frac{|z|^2}{4\bar{\sigma}^2 t}\right)\label{eq.L1},\\
			\|\bar{p}(t,x)\|_{L^2(\square_m(z))}&\le Ct^{-\frac{d}{4}}\exp\left(-\frac{|z|^2}{4\bar{\sigma}^2 t}\right)\label{eq.L2},\\
			\|f\nabla \bar{p}(t,\cdot)\|_{L^2(\square_m(z))}&\le C(\vert z \vert + 3^m)^{2}t^{-\frac{d}{4}-1}\exp\left(-\frac{|z|^2}{4\bar{\sigma}^2 t}\right).\label{eq.L3}
		\end{align}
		
		The estimates in \eqref{eq.L1}-\eqref{eq.L3} provide a uniform upper bound for $f \in \Lip(\R^d)$. 
		We substitute these estimates back into \eqref{eq.fpH1} and \eqref{eq.densityClusterKey}, and apply the $\underline{H}^{-1}$-estimate in Lemma~\ref{Lem:H} along with \eqref{equ:Orlicz_sum} to conclude that
		\begin{align*}
			\sup_{f\in \Lip(\mathbb{R}^d)} \Bigg\vert\sum_{x\in\mathbb{Z}^d}f(x)\bar{p}(t,x)\left(\mathbf{1}_{\{x\in \cltf\}}-\theta(\p)\right)\Bigg\vert \leq \left\{ 		
			\begin{array}{ll}
				\mathcal{O}_{s}\left(C A m\right),  & d=2,  \\ 
				\mathcal{O}_{s}(C A), & d\ge 3,
			\end{array}\right. 
		\end{align*}
		where $A$ is the following constant 
		\begin{align*}
			A = 3^{dm} \sum_{z\in 3^m\mathbb{Z}^d} 3^{-\frac{dm}{2}}t^{-\frac{d}{4}}\exp\left(-\frac{|z|^2} {4\bar{\sigma}^2 t}\right) \Ll(1 + ( \vert z\vert + 3^m) 3^{-m} + ( \vert z \vert + 3^m)^{2} t^{-1} \Rr).
		\end{align*}
		Let $m \in \N_+$ be chosen such that $3^m \simeq t^{\frac{1}{2}}$, i.e., $m=\lfloor\frac{1}{2}\log_3 t\rfloor$. Then, we obtain
		\begin{align*}
			A \leq  \sum_{x\in \mathbb{Z}^d} t^{-\frac{d}{2}}\exp\left(-\frac{|x|^2} {4\bar{\sigma}^2 t}\right) \Ll(1 + \frac{\vert x \vert}{\sqrt{t}} + \frac{\vert x \vert^2}{t} \Rr) < C,
		\end{align*}
		where $C$ is a constant independent of $m$ and $t$. This implies 
		\begin{multline}\label{equ:expectation_B_T1}
			\sup_{f\in \Lip(\mathbb{R}^d)}\left\vert\int_{\mathbb{Z}^d}f(x)\bar{p}(t,x)\left(\mathbf{1}_{\{x\in \cltf\}}-\theta(\p)\right)\,\d x\right\vert \\ 
			\le \left\{ 		
			\begin{array}{ll}
				\mathcal{O}_{s}\left(C_1\lfloor\frac{1}{2}\log_3t \rfloor\right),  & d=2,  \\ 
				\mathcal{O}_{s}(C_1), & d\ge 3.
			\end{array}\right. 
		\end{multline}
		Combining \eqref{equ:expectation_B_T1} and \eqref{equ:expectation_B_T2}, we derive \eqref{equ:Orlicz}.
		
		\smallskip
		
		Finally, for every $\delta>0$, we define the minimal scale
		\begin{align*}
			\mathcal{T}_{dense,\delta}(0):=\sup \Bigg\{t>0: t^{-\delta} \Bigg|\sum_{x: x\in\mathscr{C}_\infty}f(x) \bar{p}(t, x)-\int_{\mathbb{R}^d}f(x)\bar{p}(t,x)\,\d x\Bigg| \geq 1\Bigg\}.
		\end{align*}
		By Lemma~\ref{lem:scale}, there exist two positive constants $s_2$ and $C_2$ that depend on $d,\p,\delta$ such that \eqref{equ:random_time2} holds as desired.
	\end{proof}
	
	We are now ready to prove Proposition \ref{W_1}.
	
	\begin{proof}[Proof of Proposition 3.5]
		For each $\delta>0$, we define 
		\begin{align}\label{eq.defT1}
			\mathcal{T}^{(1)}_\delta(y):=\max\left\{\mathcal{T}_{par,\delta}(y),\mathcal{T}_{dense,\delta}(y)\right\}.
		\end{align}
		It follows from \eqref{equ:4} and \eqref{equ:random_time2} that \eqref{equ:random_time} holds.
		For every $t>\mathcal{T}^{(1)}_\delta (y)$, by \eqref{equ:W1_split},\eqref{equ:approximation_CLT} and \eqref{equ:scale}, we obtain \eqref{equ:W1} as desired.
	\end{proof}

	\section{Coupling of processes}\label{sec.CouplingProcess}
	This section is devoted to proving Theorem~\ref{The:main_result}, which consists of two subsections. In Subsection~\ref{Finite}, we demonstrate that for every sufficiently large time $T$, we can construct $(S_t,\B_t)_{t\in [0,T]}$ satisfying the estimate \eqref{eq.main} in the same probability space. 
	In Subsection~\ref{infinite}, we extend the coupled process $(S_t,\bar{B}_t)_{t\in[0,T]}$ to $(S_t,\bar{B}_t)_{t\ge 0}$ and conclude Theorem~\ref{The:main_result}.
	
	\subsection{Coupling in finite horizon}\label{Finite}
	We begin by constructing a coupled process in a finite horizon $[0,T]$. To this end, we define another minimal scale for every $\delta>0$ and $y\in\mathbb{Z}^d$
	\begin{equation}\label{equ:scale3}
		\mathcal{T}^{(3)}_\delta(y):\Omega\to [0,\infty).
	\end{equation}
	It satisfies the stretched exponential tail estimate
	\begin{align}\label{eq.TailT3}
		\forall T>0,\quad \P_\p\left(\mathcal{T}^{(3)}_\delta(y)\geq T\right)\le C\exp\left(-\frac{T^s}{C}\right),
	\end{align}
	where $C,s$ are all finite positive constants depending on $\p,d,\delta$.

	\begin{proposition}\label{The:main_result_finite}
		For almost every configuration $\omega\in \Omega$, we can construct a version of $(S_t)_{t \in[0,T]}$
		and $(\B_t)_{t \in [0,T]}$ in the same probability space, both starting from $y \in \cltf$, such that for every $\delta >0$ and for all $T > \T^{(3)}_\delta(y)$, we have
		\begin{align}\label{eq.main_restate}
			\E^\omega\Big[\sup_{t\in [0,T]} \vert S_t - \B_t\vert \Big] \leq K T^{\frac{1}{3}+\delta},
		\end{align}
		where $K$ is a finite positive constant depending on $\p,d,\delta$.
	\end{proposition}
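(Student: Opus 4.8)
The plan is to follow the three-step outline in Section~\ref{subsec.ingredient}, carrying out Step~2 rigorously here. First I would fix $T$ large and a mesh size: write $\Delta T := T/n$ with $n := \lfloor T^{1/3}\rfloor$, and set $t_k := k\Delta T$, $I_k := (t_k, t_{k+1})$ for $k = 0,\dots,n-1$. The construction proceeds inductively over $k$. Given $S_{t_k}$ (and the independently evolving $\B_{t_k}$), I condition on the current position $S_{t_k} = z \in \cltf$ and invoke Proposition~\ref{W_1} to produce, via the Kantorovich optimal coupling of Definition~\ref{def.Wp}, a joint law of the increment pair $(\Delta S_{t_k}, \Delta\B_{t_k})$ whose marginals are $p^\omega(\Delta T, z + \cdot, z)$ and $\bar p(\Delta T, \cdot)$ and which realizes $W_1\bigl(p^\omega(\Delta T, z+\cdot,z), \bar p(\Delta T,\cdot)\bigr)$. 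Gluing, $(S_{t_{k+1}}, \B_{t_{k+1}}) := (S_{t_k}, \B_{t_k}) + (\Delta S_{t_k}, \Delta\B_{t_k})$, and then filling in the bridges: run VSRW on $I_k$ conditioned on the endpoints $(S_{t_k}, S_{t_{k+1}})$, and run the $\bar\sigma$-Brownian bridge on $I_k$ conditioned on $(\B_{t_k}, \B_{t_{k+1}})$, both independently of everything else. This yields a genuine coupling of the two processes on $[0,T]$.

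The error estimate splits, by the triangle inequality, as
\begin{equation*}
	\sup_{t\in[0,T]}\vert S_t - \B_t\vert \le \max_{0\le k\le n}\vert S_{t_k} - \B_{t_k}\vert + \max_{0\le k\le n-1}\sup_{t\in I_k}\bigl(\vert S_t - S_{t_k}\vert + \vert \B_t - \B_{t_k}\vert\bigr).
\end{equation*}
For the skeleton term, note $\vert S_{t_k} - \B_{t_k}\vert \le \sum_{j=0}^{k-1}\vert\Delta S_{t_j} - \Delta\B_{t_j}\vert$, so $\max_k \vert S_{t_k}-\B_{t_k}\vert \le \sum_{j=0}^{n-1}\vert \Delta S_{t_j} - \Delta\B_{t_j}\vert$, and by the tower property together with \eqref{equ:W1} applied at scale $\Delta T$, each summand has conditional expectation $\le K(\Delta T)^\delta$, hence the skeleton contributes $\le K n (\Delta T)^\delta = K n^{1-\delta} T^\delta \le K T^{(1+\delta)/3+\delta'}$ after absorbing $n \asymp T^{1/3}$; choosing $\delta$ appropriately small relative to the target exponent this is $\le K T^{1/3 + \delta}$. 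For this I need $\Delta T > \mathcal{T}_\delta^{(1)}(z)$ for every position $z$ the walk visits, which forces a minimal-scale requirement: I would define $\mathcal{T}^{(3)}_\delta(y)$ so that $T > \mathcal{T}^{(3)}_\delta(y)$ guarantees $\Delta T = T/n$ exceeds $\mathcal{T}_\delta^{(1)}(z)$ for all $z$ within a ball of radius $T^{1/2+\delta}$ (say) of $y$ — using the stretched-exponential tail \eqref{equ:random_time}, a union bound over $O(T^{d(1/2+\delta)})$ sites, and Lemma~\ref{lem:scale} to convert the resulting tail into a minimal scale with tail \eqref{eq.TailT3} — together with a control (via the Carne–Varopoulos bound, Lemma~\ref{Pro: tail}, and a maximal inequality) that $S_t$ does not leave that ball on $[0,T]$, the contribution of that rare escape event being handled by Lemma~\ref{restrict}.

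For the oscillation term, on each interval $I_k$ of length $\Delta T$ the Brownian increment $\sup_{t\in I_k}\vert\B_t - \B_{t_k}\vert$ has a Gaussian tail at scale $\sqrt{\Delta T}$, i.e. it is $\mathcal{O}_2(C\sqrt{\Delta T})$ uniformly in $k$; likewise the VSRW oscillation $\sup_{t\in I_k}\vert S_t - S_{t_k}\vert$ has a sub-exponential tail at scale $\sqrt{\Delta T}$ (for $\Delta T$ large) by the heat kernel / Carne–Varopoulos estimates, so it is $\mathcal{O}_s(C\sqrt{\Delta T})$ for some $s$. Applying the maximal bound \eqref{equ:max} over the $n$ intervals gives $\max_k \sup_{t\in I_k}(\cdots) \le \mathcal{O}_s\bigl(C (\log 2n)^{1/s}\sqrt{\Delta T}\bigr)$, whose expectation is $\le C(\log n)^{1/s}\sqrt{\Delta T} = C(\log n)^{1/s}\sqrt{T/n} \le C T^{1/3+\delta}$ for $n \asymp T^{1/3}$. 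Combining the two contributions proves \eqref{eq.main_restate}. The main obstacle I anticipate is the bookkeeping of the minimal scale $\mathcal{T}^{(3)}_\delta(y)$: one must make the requirement ``$\Delta T$ beats $\mathcal{T}^{(1)}_\delta(z)$ at every visited site'' precise and self-consistent — the walk's range is random and depends on the coupling, while the good-scale event depends only on $\omega$ — which is why the escape event must be quarantined and estimated separately via Lemma~\ref{restrict}, and why the final minimal scale is an essentially larger object than $\mathcal{T}^{(1)}_\delta$ itself, obtained by a union bound plus Lemma~\ref{lem:scale}.
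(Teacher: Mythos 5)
Your overall architecture is exactly the paper's: subdivide $[0,T]$ into $n=\lfloor T^{1/3}\rfloor$ blocks of length $\Delta T$, glue the $W_1$-optimal marginal couplings of the increments at the grid points, fill in with the VSRW and Brownian bridges, split the error into a skeleton term $\sum_j|\Delta S_{t_j}-\Delta\B_{t_j}|$ and a within-block oscillation term, and quarantine the rare event that the walk leaves a fixed ball so that the $W_1$ bound at scale $\Delta T$ applies uniformly at all visited sites. The parameter bookkeeping ($n(\Delta T)^\delta$ and $(\log n)\sqrt{\Delta T}$, both $\lesssim T^{1/3+O(\delta)}$) and the use of Lemma~\ref{lem:scale} plus a union/max bound to convert ``$\mathcal{T}^{(1)}_\delta$ small on a ball'' into a new minimal scale are also what the paper does, and your optional choice of ball radius $T^{1/2+\delta}$ in place of $T$ is a harmless variant.

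There is, however, a concrete gap in your definition of $\mathcal{T}^{(3)}_\delta(y)$: you build it solely out of $\mathcal{T}^{(1)}_\delta$, but the other two ingredients you rely on --- the within-block oscillation bound $\sup_{t\in I_k}|S_t-S_{t_k}|\leq\mathcal{O}_1(C\sqrt{\Delta T})$ and the escape-probability estimate for the good event --- each carry their \emph{own} $\omega$-measurable minimal scale. The Carne--Varopoulos bound (Lemma~\ref{Pro: tail}) only controls transition probabilities at a fixed time, not the running supremum of the path, so ``Carne--Varopoulos plus a maximal inequality'' does not produce the needed estimate; what is actually required is the ABDH-type bound $P^\omega_y\big(\sup_{t\le T}|S_t-y|>R\big)\le C_1\Psi(C_2R,T)$ of Lemma~\ref{Pro:Barlow}, which is only valid for $R>\mathcal{R}(y)$, and this $\mathcal{R}(y)$ must be bootstrapped over the ball of visited sites (to $\mathcal{R}^{(1)}(y)$ in the paper's notation) and folded into $\mathcal{T}^{(3)}_\delta(y)$ along with the $\mathcal{T}^{(1)}_\delta$-component. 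Without that extra component, the sub-exponential tail of the oscillation and the exponential smallness of $P^\omega(D^c)$ are both unjustified at the scales you need them, and neither the $(\log n)\sqrt{\Delta T}$ term nor the $o(1)$ atypical contribution goes through. The fix is precisely the paper's $\mathcal{T}^{(3)}_\delta:=\max\{\mathcal{T}^{(2)}_\delta,\mathcal{R}^{(1)}\}$.
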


	The proof of Proposition \ref{The:main_result_finite} consists of three ingredients: 
	\begin{itemize}
		\item Constructing the coupled process  $(S_t,\bar{B}_t)_{t\in[0,T]}$;
		\item Verifying the consistency of the coupling;
		\item Estimating the distance.
	\end{itemize}
	The first two ingredients are provided in Subsection~\ref{constructing the process}, and the last is provided in Subsection~\ref{bounding the typical distance}.

	\subsubsection{Construction of the coupled process}\label{constructing the process}
	\begin{definition}[Coupling in finite horizon]\label{def.CFH}
		We denote by $\CFH$$(T,n,y)$ the coupling of $(S_{t},\B_{t})_{0\leq t \leq T}$ constructed in the following way. We define that 
		\begin{align}\label{eq.defInterval_again}
			\Delta T = T/n, \quad t_k = k \Delta T, \quad I_k = (t_k, t_{k+1}),	\qquad k\in\{0,1,\dots, n-1\}.
		\end{align}
		\begin{enumerate}[label=(\Alph*)]
			\item The processes start from $S_0 = \B_0 = y$.
			\item Conditioned on $\big(S_{t_i},\B_{t_i}\big)_{0\le i\le k}$, we sample the increment $\big(\Delta S_{t_k}, \Delta\B_{t_k}\big)$ 
			\begin{align*} 
				\big(\Delta S_{t_k}, \Delta\B_{t_k}\big)  \in \varPi\big(p^\omega(\Delta T,S_{t_k} +\cdot,S_{t_k}), \bar{p}(\Delta T,\cdot) \big),
			\end{align*}
			 such that
			\begin{align*} 
				\E^\omega \Ll[\Ll\vert \Delta S_{t_k}-\Delta\B_{t_k}\Rr\vert \Big\vert \big(S_{t_i},\B_{t_i}\big)_{0\le i\le k} \Rr]=W_1\big(p^\omega(\Delta T,S_{t_k}+\cdot,S_{t_k}), \bar{p}(\Delta T,\cdot) \big).
			\end{align*}
			Then we  define 
			\begin{align*} 
				\big(S_{t_{k+1}},\B_{t_{k+1}}\big) := \big(S_{t_{k}},\B_{t_{k}}\big) + \big(\Delta S_{t_k}, \Delta\B_{t_k}\big).
			\end{align*}
			\item Given the sequence $\big(S_{t_k},\B_{t_k}\big)_{0\le k\le n}$, we sample the trajectories as follows:
			\begin{itemize}[label=---]
				\item $(S_t)_{t \in [0,T]}$ is sampled as the VSRW defined in \eqref{eq.VSRW} conditioned on the values $(S_{t_k})_{0\le k\le n}$ at the endpoints $(t_k)_{0\le k\le n}$.
				\item $(\B_t)_{t \in [0,T]}$ is sampled as Brownian motion with diffusion constant $\bar{\sigma}^2$ conditioned on the values $(\B_{t_k})_{0\le k\le n}$ at the endpoints $(t_k)_{0\le k\le n}$.
			\end{itemize}
		\end{enumerate}
	\end{definition}

	We refer to Lemma~\ref{lem.existence} for the existence of $\CFH$, and the following proposition justifies its consistency.
	
	\begin{proposition}\label{Pro:distribution}
		The process $\CFH$$(T,n,y)$ is a coupling between the VSRW and Brownian motion with diffusive constant $\bar{\sigma}$ on the interval $[0,T]$.
	\end{proposition}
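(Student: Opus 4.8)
\textbf{Proof proposal for Proposition~\ref{Pro:distribution}.}

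The plan is to verify the two marginal claims separately: first that $(S_t)_{t\in[0,T]}$ constructed via $\CFH(T,n,y)$ has the law of the VSRW started from $y$, and then that $(\B_t)_{t\in[0,T]}$ has the law of Brownian motion with diffusive constant $\bar\sigma$ started from $y$. Both arguments follow the same skeleton: we show that the finite-dimensional distributions at the grid points $(t_k)_{0\le k\le n}$ agree with those of the target process, and then invoke the conditioning prescribed in step (C) of Definition~\ref{def.CFH} together with the Markov property to conclude that the full path has the correct law.

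For the $S$-marginal, the key observation is the defining property of a coupling (Definition of $\varPi$): in step (B), $\big(\Delta S_{t_k},\Delta\B_{t_k}\big)\in\varPi\big(p^\omega(\Delta T,S_{t_k}+\cdot,S_{t_k}),\bar p(\Delta T,\cdot)\big)$, so regardless of how the joint law is chosen, the first marginal of $\Delta S_{t_k}$ conditioned on $\big(S_{t_i},\B_{t_i}\big)_{0\le i\le k}$ is exactly $p^\omega(\Delta T,S_{t_k}+\cdot,S_{t_k})$; in particular it depends only on $S_{t_k}$. Hence $(S_{t_k})_{0\le k\le n}$ is a Markov chain whose one-step transition kernel from $x$ to $x'$ is $p^\omega(\Delta T,x',x)=P^\omega_x(S_{\Delta T}=x')$, which is precisely the $\Delta T$-skeleton of the VSRW generated by $\lL_V$ in \eqref{eq.VSRW}, started from $y$ by step (A). By the Markov property of the VSRW, the law of the VSRW restricted to the grid times $(t_k)$ is this same Markov chain, so the grid marginals match. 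Then step (C) samples $(S_t)_{t\in[0,T]}$ from the regular conditional law of the VSRW given its values at $(t_k)_{0\le k\le n}$; composing this conditional law with the correct grid marginal reconstructs exactly the law of the VSRW on $[0,T]$ (here I would cite the Markov/bridge decomposition: a process is determined by a consistent family of skeleton laws plus the conditional bridge laws, which for a time-homogeneous Markov process are well-defined). The argument for $\B$ is identical: by the coupling property the second marginal of $\Delta\B_{t_k}$ given $\big(S_{t_i},\B_{t_i}\big)_{0\le i\le k}$ is $\bar p(\Delta T,\cdot)=\mathcal N(0,\bar\sigma^2\Delta T\,\mathrm{Id})$, independent of the past, so $(\B_{t_k})_{0\le k\le n}$ is a random walk with Gaussian increments of variance $\bar\sigma^2\Delta T$ started at $y$ — exactly the grid skeleton of $(\bar\sigma B_t)_{t\ge 0}$ started at $y$ — and step (C) fills in Brownian bridges of the correct diffusivity, yielding Brownian motion with diffusive constant $\bar\sigma$ on $[0,T]$.

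The main subtlety — not so much an obstacle as a point requiring care — is that step (B) specifies a joint distribution of $\big(\Delta S_{t_k},\Delta\B_{t_k}\big)$ (the one attaining $W_1$), and one must be sure this does not corrupt the individual marginals: it does not, precisely because any element of $\varPi(\mu,\nu)$ has first marginal $\mu$ and second marginal $\nu$ \emph{by definition}, so the extra correlation introduced to optimize transport is invisible to either marginal process. The only remaining technical point is measurability/existence of the coupling kernel and of the conditional bridge laws, which is handled by Lemma~\ref{lem.existence} (and, for the $W_1$-optimal coupler, by \cite[Theorem~4.1]{Villani2009Optimal} together with a measurable selection as the starting point $S_{t_k}$ varies over $\cltf$). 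Assembling these pieces gives that $\CFH(T,n,y)$ is a genuine coupling of the VSRW and Brownian motion with diffusive constant $\bar\sigma$ on $[0,T]$, both started from $y$.
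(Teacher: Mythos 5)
Your proof is correct and follows essentially the same route as the paper: identify the conditional law of each grid increment via the marginal property of the coupling $\varPi$ from step (B), deduce that the grid-point marginals $(S_{t_k})_{0\le k\le n}$ and $(\B_{t_k})_{0\le k\le n}$ match those of the VSRW and of Brownian motion respectively, and then observe that step (C) with the bridge/conditional law reconstructs the full path measure. The paper phrases the grid-skeleton computation via an explicit chain-rule product of transition probabilities rather than the Markov-chain language you use, but the content is identical.
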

	\begin{proof}
		Since the trajectory of $(S_t)_{t\in[0,T]}$ is c\`adl\`ag and $(\B_t)_{t\in[0,T]}$ is continuous, it suffices to verify the consistency of finite-dimensional distribution. We first deal with $(S_t)_{t\in[0,T]}$. Denoting $y_0 = y$, the chain rule of conditional probability implies 
		\begin{align*}
			\P^\omega\left(S_{t_k}=y_k,k=1,\dots,n\right)
			=\prod_{i=1}^n\P^\omega\Ll( \Delta S_{t_{i-1}} = y_i-y_{i-1} \vert S_{t_0} = y_0, \cdots ,S_{t_{i-1}} = y_{i-1}\Rr).
		\end{align*}
		We then use (B) of Definition~\ref{def.CFH} and the property of conditional expectation
		\begin{align*}
			\P^\omega\Ll( \Delta S_{t_{i-1}} = y_i-y_{i-1} \vert S_{t_0} = y_0, \cdots ,S_{t_{i-1}} = y_{i-1}\Rr)
			= p^\omega(\Delta T,y_i,y_{i-1}).
		\end{align*}
		This leads to the following identity
		\begin{equation*}
			\P^\omega\left(S_{t_k}=y_k,k=1,\dots,n\right)
			=\prod_{i=0}^{n-1}p^\omega(\Delta T, y_i, y_{i-1}).
		\end{equation*}
		
		Consequently, we obtain that $(S_{t_k})_{0\le k\le n}$ is distributed identically to the VSRW at the endpoints $(t_k)_{0\le k\le n}$. Then,  the consistency of $(S_t)_{t\in[0,T]}$ is ensured by the natural extension in (C).

		The proof for $(\B_t)_{t\in[0,T]}$ is similar, and the details are omitted.
	\end{proof}

	\subsubsection{Distance between $(S_t)_{t\in[0,T]}$ and $(\B_t)_{t\in[0,T]}$}\label{bounding the typical distance}
	The objective of this subsection is to prove \eqref{eq.main_restate} for the coupling in Definition~\ref{def.CFH}. 
	
	\begin{proposition}\label{prop.CFH_bound}
		In the setting of Proposition~\ref{The:main_result_finite}, $\CFH$$(T, n, y)$ with ${n = \lfloor T^{\frac{1}{3}} \rfloor}$ satisfies the estimate \eqref{eq.main_restate}.
	\end{proposition}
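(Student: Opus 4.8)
The plan is to follow the three-part scheme outlined after Proposition~\ref{The:main_result_finite}, with the quantitative inputs being Proposition~\ref{W_1} (the marginal $W_1$-estimate) and standard Gaussian-type fluctuation bounds for the VSRW and Brownian bridge. Fix $\delta>0$, set $n=\lfloor T^{1/3}\rfloor$ and $\Delta T = T/n$, so $\Delta T\asymp T^{2/3}$. Write $D_t := S_t-\B_t$. I would split the supremum over $[0,T]$ through the grid points $(t_k)_{0\le k\le n}$:
\begin{align*}
    \sup_{t\in[0,T]}\vert D_t\vert
    \le \max_{0\le k\le n}\vert D_{t_k}\vert
       + \max_{0\le k\le n-1}\sup_{t\in I_k}\vert D_t - D_{t_k}\vert
    \le \max_{0\le k\le n}\vert D_{t_k}\vert
       + \max_{0\le k\le n-1}\sup_{t\in I_k}\vert S_t - S_{t_k}\vert
       + \max_{0\le k\le n-1}\sup_{t\in I_k}\vert \B_t - \B_{t_k}\vert.
\end{align*}
The first term is the accumulated coupling error at the grid points; the last two are the oscillations of the two processes inside a single interval of length $\Delta T$.

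\textbf{Step 1 (grid points).} By construction (B) in Definition~\ref{def.CFH}, conditionally on $(S_{t_i},\B_{t_i})_{i\le k}$ the increment $(\Delta S_{t_k},\Delta\B_{t_k})$ is the $W_1$-optimal coupling of $\bigl(p^\omega(\Delta T, S_{t_k}+\cdot, S_{t_k}),\,\bar p(\Delta T,\cdot)\bigr)$, so
\[
    \E^\omega\Bigl[\,\vert \Delta S_{t_k}-\Delta\B_{t_k}\vert \,\Big|\, (S_{t_i},\B_{t_i})_{i\le k}\Bigr]
    = W_1\bigl(p^\omega(\Delta T, S_{t_k}+\cdot, S_{t_k}),\,\bar p(\Delta T,\cdot)\bigr).
\]
If every grid point $S_{t_k}$ lay in the "good" region where Proposition~\ref{W_1} applies with $\Delta T > \mathcal{T}^{(1)}_{\delta'}(S_{t_k})$, this would be $\le K(\Delta T)^{\delta'}$, and then by the triangle inequality and a telescoping sum,
\[
    \max_{0\le k\le n}\vert D_{t_k}\vert
    \le \sum_{k=0}^{n-1}\vert \Delta S_{t_k}-\Delta\B_{t_k}\vert,
    \qquad
    \E^\omega\Bigl[\max_{0\le k\le n}\vert D_{t_k}\vert\Bigr]
    \le \sum_{k=0}^{n-1}\E^\omega\bigl[\vert \Delta S_{t_k}-\Delta\B_{t_k}\vert\bigr]
    \le n\,K(\Delta T)^{\delta'}.
\]
With $n = \lfloor T^{1/3}\rfloor$ and $\Delta T\le T$ this is $\le K' T^{1/3+\delta'}$, which after relabeling $\delta'$ is the desired order. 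The subtlety — and this is where the minimal scale $\mathcal{T}^{(3)}_\delta(y)$ must be defined — is that $\mathcal{T}^{(1)}_{\delta'}$ is a function of the (random) running position $S_{t_k}\in\cltf$, so I need a uniform control: on a ball of radius comparable to the diffusive scale $\sqrt{T\log T}$ around $y$, the maximum of $\mathcal{T}^{(1)}_{\delta'}(z)$ over $z\in\cltf\cap B(y,R)$ is, by the stretched-exponential tail \eqref{equ:random_time} and a union bound over polynomially-many sites (cf.\ \eqref{equ:max}), at most $\mathcal{O}_s\bigl((\log R)^{1/s}\bigr)$, hence $\le \Delta T\asymp T^{2/3}$ once $T$ exceeds an $\omega$-measurable scale. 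On the complementary event that $S_{t_k}$ escapes $B(y,R)$ before time $T$ — which has probability decaying in $R$ by the Carne--Varopoulos bound \eqref{eq.carne} — I would use the crude deterministic bound $\vert \Delta S_{t_k}-\Delta\B_{t_k}\vert$ controlled by $\vert\Delta S_{t_k}\vert+\vert\Delta\B_{t_k}\vert$ together with Lemma~\ref{restrict} to show the contribution of this rare event is negligible. This bookkeeping is, I expect, the main obstacle: turning the pointwise-in-$y$ statement of Proposition~\ref{W_1} into a statement valid simultaneously at all grid points visited by the walk, and absorbing the rare large-excursion event, while keeping all thresholds inside a single minimal scale $\mathcal{T}^{(3)}_\delta(y)$ with the required tail \eqref{eq.TailT3}.

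\textbf{Step 2 (oscillations inside $I_k$).} Here I use that, conditioned on its endpoints, $(\B_t)_{t\in I_k}$ is a Brownian bridge of duration $\Delta T$, whose oscillation $\sup_{t\in I_k}\vert \B_t-\B_{t_k}\vert$ has a Gaussian tail at scale $\sqrt{\Delta T}$, i.e.\ is $\le \mathcal{O}_2(C\sqrt{\Delta T})$, uniformly in the endpoint values; for the VSRW, conditioned on $(S_{t_k})$, the increment $S_t - S_{t_k}$ on $I_k$ also has a sub-Gaussian-at-scale-$\sqrt{\Delta T}$ tail up to the ballistic regime — using the Carne--Varopoulos bound \eqref{eq.carne} to handle the far tail and the heat-kernel/on-diagonal bound for the near regime — again uniformly in the (conditioned) endpoints. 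Taking the maximum over the $n$ intervals and applying \eqref{equ:max} (the $\mathcal{O}_s$-maximum lemma),
\[
    \max_{0\le k\le n-1}\sup_{t\in I_k}\vert \B_t-\B_{t_k}\vert + \max_{0\le k\le n-1}\sup_{t\in I_k}\vert S_t-S_{t_k}\vert
    \le \mathcal{O}_{s}\Bigl(C(\log n)^{1/s}\sqrt{\Delta T}\Bigr),
\]
so its expectation is $\lesssim (\log n)^{1/s}\sqrt{\Delta T}\asymp (\log T)^{1/s} T^{1/3}\le K T^{1/3+\delta}$.

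\textbf{Step 3 (combine and optimize).} Adding the three pieces,
\[
    \E^\omega\Bigl[\sup_{t\in[0,T]}\vert S_t-\B_t\vert\Bigr]
    \le K\Bigl(n(\Delta T)^{\delta} + (\log n)^{1/s}\sqrt{\Delta T}\Bigr)
    = K\Bigl(n\,(T/n)^{\delta} + (\log n)^{1/s}\sqrt{T/n}\Bigr),
\]
and the choice $n = \lfloor T^{1/3}\rfloor$ balances the two terms up to the $T^{o(1)}$ factors, giving $\le K T^{1/3+\delta}$ (after a harmless adjustment of $\delta$ to swallow the logarithm), valid for all $T$ larger than
\[
    \mathcal{T}^{(3)}_\delta(y) := \max\Bigl\{\text{(scale from Step 1's uniform version of }\mathcal{T}^{(1)}),\ \text{(scale from Step 2's tail bounds)},\ C_\delta\Bigr\},
\]
whose stretched-exponential tail \eqref{eq.TailT3} follows from \eqref{equ:random_time}, the union bound over polynomially many sites, Lemma~\ref{lem:scale}, and \eqref{equ:max}. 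As noted, I expect essentially all the genuine difficulty to sit in Step 1 — specifically, in making the marginal bound hold uniformly over the trajectory's range and in controlling the rare event where the walk has traveled an atypically large distance.
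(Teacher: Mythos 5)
Your proposal follows the paper's proof essentially step for step: the same grid/oscillation decomposition, the same telescoping sum controlled by the $W_1$-optimal increment coupling at each grid point, the same union bound over the ball $B(y,T)$ combined with Lemma~\ref{lem:scale} to promote $\mathcal{T}^{(1)}_\delta$ to a minimal scale $\mathcal{T}^{(2)}_\delta(y)$ that is uniform over the walk's range, and the same good/bad event split (the paper's $D$ in \eqref{eq.defGoodEvent}) with Lemma~\ref{restrict} absorbing the rare escape event. The one imprecision worth flagging is in your Step~2: the Carne--Varopoulos bound \eqref{eq.carne} is a fixed-time heat-kernel estimate and does not by itself control $\sup_{t\in I_k}\vert S_t-S_{t_k}\vert$; the paper instead uses the maximal displacement inequality of \cite[Proposition~4.7]{ABDH} (Lemma~\ref{Pro:Barlow}, Corollary~\ref{cor.S_max}), whose own $\omega$-measurable threshold $\mathcal{R}(y)$ is the second ingredient folded into $\mathcal{T}^{(3)}_\delta(y)$ in \eqref{eq.defT3}.
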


	The choice of $n = \lfloor T^{\frac{1}{3}} \rfloor$ is motivated by the heuristic analysis in Section~\ref{subsec.ingredient}. To prove Proposition \ref{prop.CFH_bound}, the following lemma from \cite[Proposition 4.7]{ABDH} is useful to control the maximum displacement of VSRW in every interval.
	
	We define the function
	\[
	\Psi(R,T)= \begin{cases}\exp\left(-\frac{R^2}{T}\right) & \text { if } R \leq e T,\\ \exp\left(-R\log\left(\frac{R}{T}\right)\right) & \text { if } R > e T.\end{cases}
	\]
	\begin{lemma}\label{Pro:Barlow}
		For every $y\in\mathbb{Z}^d$, there exist finite positive constants $C$ and $s$ depending on $d,\p$ and a nonnegative $\omega$-measurable random variable $\mathcal{R}(y)$ satisfying 
		\begin{equation}\label{equ:R_tail}
			\forall r>0,\quad \P_\p(\mathcal{R}(y)\ge r)\le C\exp\left(-\frac{r^s}{C}\right),
		\end{equation} 
		such that for almost every $\omega\in\Omega$ and every $y \in \cltf$, when $R>\mathcal{R}(y)$, we have
		\begin{equation}\label{equ:maxS_tail}
			\forall T>0,\quad P^\omega_y\Big(\sup_{t\in[0,T]}\vert S_t-y\vert>R\Big)\le C_1\Psi\left(C_2R,T\right),
		\end{equation}
		where $C_1,C_2$ are finite positive constants depending on $d,\p$.
		
	\end{lemma}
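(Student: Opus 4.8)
Since the two displayed bounds are quoted from \cite[Proposition~4.7]{ABDH}, the only thing I would need to explain is how they specialize to the VSRW on the Bernoulli cluster, and I would give a short self-contained argument in two moves: a L\'evy-type maximal inequality reducing \eqref{equ:maxS_tail} to a one-time tail bound, and then that one-time bound extracted from the Carne--Varopoulos estimate of Lemma~\ref{Pro: tail}.

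\textbf{Step 1 (reduction to a fixed time).} Let $\tau_R:=\inf\{t\ge 0:|S_t-y|>R\}$. Since the VSRW jumps only to nearest neighbours, $|S_{\tau_R}-y|\le R+1$ on $\{\tau_R<\infty\}$; hence if $\tau_R\le T$ and $|S_T-S_{\tau_R}|\le R/3$ then $|S_T-y|>2R/3$. By the strong Markov property at $\tau_R$,
\begin{align*}
	P^\omega_y\Big(\sup_{t\in[0,T]}|S_t-y|>R\Big)
	&=P^\omega_y(\tau_R\le T)\\
	&\le\Big(\inf_{z\in\cltf,\,s\le T}P^\omega_z\big(|S_s-z|\le R/3\big)\Big)^{-1}P^\omega_y\big(|S_T-y|>R/3\big).
\end{align*}
So it suffices to establish a uniform one-time bound $P^\omega_z(|S_t-z|>R)\le C\Psi(cR,T)$ valid for all $z\in\cltf$ and all $t\le T$: this controls the last factor directly, and makes the infimum above at least $1/2$ once $R\ge C'\sqrt T$; in the complementary range $R<C'\sqrt T$ the target $C_1\Psi(C_2R,T)$ is bounded below by a positive constant, hence holds trivially after enlarging $C_1$.

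\textbf{Step 2 (the one-time bound).} Write $P^\omega_z(|S_t-z|>R)=\sum_{x\in\cltf,\,|x-z|>R}p^\omega(t,x,z)$. For each such $x$ we have $\mathsf{d}(x,z)\ge|x-z|_1\ge|x-z|>R$, so Lemma~\ref{Pro: tail} gives $p^\omega(t,x,z)\le C\exp(-|x-z|^2/(Ct))$ when $|x-z|\le t$, and $p^\omega(t,x,z)\le C\exp(-c|x-z|(1+\log(|x-z|/t)))$ when $|x-z|>t$ (using that $s\mapsto s(1+\log(s/t))$ is increasing on $(t/e,\infty)$ and that $\mathsf{d}(x,z)\ge|x-z|$). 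Summing over the dyadic annuli $B(z,2^k)\setminus B(z,2^{k-1})$, bounding each annulus by $C2^{kd}$ points, and absorbing the polynomial prefactor into the exponential exactly as in \eqref{equ:absorb}--\eqref{equ:W1_first1}, the sum is at most $C\exp(-cR^2/(CT))$ when $R\le eT$ and at most $C\exp(-cR\log(R/T))$ when $R>eT$ (using $t\le T$ throughout, and the borderline $R\asymp T$ handled by interpolation). Re-choosing $C_1$ and a sufficiently small $C_2$ turns this into $C_1\Psi(C_2R,T)$, uniformly in $t\le T$, which is exactly the one-time input needed in Step~1.

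\textbf{On the random scale and the main difficulty.} In the present bounded-conductance model the Carne--Varopoulos bound of Lemma~\ref{Pro: tail} is deterministic and $\cltf\cap B(z,r)$ is automatically contained in a Euclidean ball, so no random input actually enters and one may in fact take $\mathcal{R}(y)$ to be a deterministic constant (making \eqref{equ:R_tail} trivial); I would keep the random scale only to match the general statement of \cite[Proposition~4.7]{ABDH}, where unbounded conductances genuinely require it. The one real subtlety is the bookkeeping in Step~2: one must reconcile the two regimes of $\Psi$ — the sub-Gaussian regime $R\le eT$ and the sub-exponential-with-logarithm regime $R>eT$, together with the borderline $R\asymp T$ — within a single family of annulus sums, while keeping careful track of the chemical versus the Euclidean distance in the case split of Lemma~\ref{Pro: tail}. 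This is where all the constants get pinned down; everything else is routine.
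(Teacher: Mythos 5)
Your proof is correct, and it is a genuinely different route from the paper's: the paper does not prove this lemma at all — it is quoted verbatim from \cite[Proposition~4.7]{ABDH} — whereas you give a self-contained argument built from the Lévy-type maximal inequality together with the Carne--Varopoulos bound (Lemma~\ref{Pro: tail}), which the paper already records for a different purpose. The two regimes of the Carne bound do sort cleanly against the two regimes of $\Psi$ exactly as you describe: when $|x-z|\le t$ the sub-Gaussian bound follows whether or not $\mathsf{d}(x,z)\le t$ (if $\mathsf{d}>t$ one uses $\mathsf{d}(1+\log(\mathsf{d}/t))\ge t\ge|x-z|^2/t$), and when $|x-z|>t$ the monotonicity of $s\mapsto s(1+\log(s/t))$ passes the bound from chemical to Euclidean distance; summing dyadic annuli and using $t\le T$ then yields the claimed uniform one-time bound, and the Lévy step converts it. Your observation on the random scale is the more interesting difference: because the percolation conductances lie in $\{0,1\}$, the Carne--Varopoulos input is deterministic and uniform over $z\in\cC_\infty$, so $\mathcal{R}(y)$ can be taken as a deterministic constant (needed only so that $R$ clears the constants in the two regime boundaries), whereas the genuinely random scale in \cite{ABDH} is there to tame unbounded conductances. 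This buys a more elementary and transparent proof specific to percolation, at the cost of not directly generalizing to the broader conductance models covered by \cite[Proposition~4.7]{ABDH}; the only place where your write-up is terse is the bookkeeping at the borderline $R\asymp T$, which does work out with $C_2$ small and $C_1$ large.
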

	
	The following corollary is derived from the above lemma.
	\begin{corollary}\label{cor.S_max}
		There exists $C:=C(d,\p)>0$, such that for almost every $\omega\in\Omega$ and every $y\in\cltf$, for all
		$T \geq \mathcal{R}(y)^2$, we have
		\begin{align}\label{eq.maxSt}
			\sup_{t\in[0,T]}\vert S_t-y\vert \leq \oO^\omega_1(C \sqrt{T}),
		\end{align}
		where $\oO^\omega_1(\cdot)$ is associated with $P^\omega_y$.
	\end{corollary}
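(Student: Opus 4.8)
The plan is to convert the tail estimate of Lemma~\ref{Pro:Barlow} into a statement about the Orlicz norm $\|\cdot\|_{\psi_1}$, i.e.\ to bound an exponential moment. Write $Y:=\sup_{t\in[0,T]}\vert S_t-y\vert$; since $E^\omega_y\bigl[e^{Y/\theta}\bigr]\le 2$ implies $Y\le\oO^\omega_1(\theta)$ by the definition~\eqref{eq.defOrlicz} of $\|\cdot\|_{\psi_1}$ (with $\psi_1(u)=e^u-1$), it suffices to produce $\theta=C\sqrt T$ with $C=C(d,\p)$ large for which this holds. The role of the hypothesis $T\ge \mathcal{R}(y)^2$ is exactly to guarantee $\sqrt T\ge\mathcal{R}(y)$ (recall the convention $\mathcal{R}(y)\ge 1$), so that Lemma~\ref{Pro:Barlow} is available for every radius $R>\sqrt T$, and to fix the natural length scale at $\sqrt T$.

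First I would simplify the piecewise function $\Psi$. For $R\ge\sqrt T$ one checks that $\Psi(C_2R,T)\le\exp(-c_0 R/\sqrt T)$ with $c_0:=\min\{C_2^2,C_2\}$: in the regime $\sqrt T\le R\le eT/C_2$ this follows from $R^2/T=(R/\sqrt T)^2\ge R/\sqrt T$, and in the regime $R>eT/C_2$ from $C_2R/T>e$, hence $\log(C_2R/T)>1$, together with $\sqrt T\ge 1$. Combined with Lemma~\ref{Pro:Barlow}, this yields the clean bound $P^\omega_y(Y>R)\le C_1\exp(-c_0 R/\sqrt T)$ for $R>\sqrt T$, while trivially $P^\omega_y(Y>R)\le 1$ for every $R$.

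Next I would expand the exponential moment by the layer-cake formula (substituting $v=\theta\log t$), $E^\omega_y[e^{Y/\theta}]=1+\theta^{-1}\int_0^\infty P^\omega_y(Y>v)\,e^{v/\theta}\,\d v$, and split the integral at $v=\sqrt T$. On $[0,\sqrt T]$ the bound $P^\omega_y(Y>v)\le1$ contributes at most $e^{\sqrt T/\theta}-1=e^{1/C}-1$; on $(\sqrt T,\infty)$ the tail from the previous step, with $\theta=C\sqrt T$ and $C\ge 2/c_0$, makes $P^\omega_y(Y>v)e^{v/\theta}\le C_1\exp(-\tfrac{c_0}{2}v/\sqrt T)$ and contributes at most $(2C_1/(Cc_0))e^{-c_0/2}$. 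Since both error terms vanish as $C\to\infty$ while $C_1,c_0$ depend only on $d$ and $\p$, choosing $C=C(d,\p)$ large enough forces $E^\omega_y[e^{Y/\theta}]\le 2$, which is the desired conclusion $\sup_{t\in[0,T]}\vert S_t-y\vert\le\oO^\omega_1(C\sqrt T)$.

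I do not expect a genuine obstacle: the estimate is an essentially mechanical consequence of Lemma~\ref{Pro:Barlow}. The only points needing a little care are (i) reducing the two-regime function $\Psi$ to a single exponential uniformly in $T$, and (ii) ensuring $C$ can be taken independent of $T$ — both handled by $T\ge\mathcal{R}(y)^2\ge 1$, which simultaneously activates Lemma~\ref{Pro:Barlow} and pins the length scale at $\sqrt T$. One could alternatively route the argument through the tail-to-$\oO_s$ implication~\eqref{equ:tail_1}, at the cost of separately absorbing the prefactor $C_1$ and the range restriction $R>\sqrt T$; the direct exponential-moment computation above is cleaner.
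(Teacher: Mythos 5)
Your proof is correct and follows essentially the same route as the paper: both reduce Lemma~\ref{Pro:Barlow}'s two-regime bound $\Psi$ to the single tail $C\exp(-cR/\sqrt{T})$ for all $R\ge 0$ by using $T\ge\mathcal{R}(y)^2\ge 1$ (the paper splitting into the cases $R\ge eT$, $\sqrt T\le R<eT$, and $R<\sqrt T$, you into the cases $R\le eT/C_2$, $R>eT/C_2$ plus the trivial bound on $[0,\sqrt T]$), and both then pass to the $\psi_1$-Orlicz norm. The only cosmetic difference is at the final step, where the paper invokes the tail-to-$\mathcal{O}_s$ implication~\eqref{equ:tail_1} while you compute the exponential moment directly through the layer-cake formula — a slightly more explicit but entirely equivalent finish.
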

	\begin{proof}
		By \eqref{equ:tail_1}, it suffices to derive an estimate for the tail probability. Since ${\mathcal{R}(y)\ge 1}$ by convention, we always have $\mathcal{R}^2(y)\ge\mathcal{R}(y)$ and $T\ge 1$. Then, the proof is divided into three cases.

		\emph{Case $R\ge eT$}: By $T\ge\mathcal{R}(y)^2\ge \mathcal{R}(y)$,  Lemma \ref{Pro:Barlow} implies 
		\begin{equation}\label{equ:maxSt_tail1}
			P^\omega_y\Big(\sup_{t\in[0,T]}\vert S_t-y\vert>R\Big)\le C_1\exp\left(-C_2R\right)\le C_1\exp\left(-\frac{C_2R}{\sqrt{T}}\right).
		\end{equation}
		
		\emph{Case $\sqrt{T}\le R< eT$}: By $T\ge\sqrt{T}\ge \mathcal{R}(y)$, Lemma \ref{Pro:Barlow} gives us
		\begin{equation}\label{equ:maxSt_tail2}
			P^\omega_y\Big(\sup_{t\in[0,T]}\vert S_t-y\vert>R\Big)\\
			\le C_1\exp\left(-\frac{C^2_2R^2}{T}\right)\le C_1\exp\left(-\frac{C^2_2 R}{\sqrt{T}}\right).
		\end{equation}
		
		Combining \eqref{equ:maxSt_tail1} and \eqref{equ:maxSt_tail2}, there exists a constant $C(d,\p)>0$ such that
		\begin{equation*}
			\forall R\ge\sqrt{T},\qquad P^\omega_y\Big(\sup_{t\in[0,T]}\vert S_t-y\vert>R\Big)\le C \exp\left(-\frac{R}{C\sqrt{T}}\right).
		\end{equation*}
		It remains to show that the above inequality holds when $R\in [0,\sqrt{T})$.
		
		\emph{Case $0\le R<\sqrt{T}$}: for sufficiently large $C$, we obtain a naive bound
		\begin{equation}
			P^\omega_y\Big(\sup_{t\in[0,T]}\vert S_t-y\vert>R\Big) \leq 1\le C \exp\left(-\frac{1}{C}\right)\le C \exp\left(-\frac{R}{C\sqrt{T}}\right),
		\end{equation}
		which leads to the desired result.
	\end{proof}

	\begin{proof}[Proof of Proposition~\ref{prop.CFH_bound}]
		We recall the definition in \eqref{eq.defInterval_again} and maintain that $n = \lfloor T^{\frac{1}{3}} \rfloor$ and $\Delta T \simeq T^{\frac{2}{3}}$ throughout the proof.  The proof can be divided into three steps.
		
		\textit{Step~1: set up the the minimal scales.}
		Define a good event that captures the typical case
		\begin{align}\label{eq.defGoodEvent}
			D:= \bigcap_{k=0}^{n-1}\big\{S_{t_k}\in B(y,T)\big\}. 
		\end{align}
		
		To facilitate subsequent calculations, we decompose the distance between the processes as follows 
		\begin{equation}
			\begin{aligned}\label{equ:split}
				&\E^\omega\Big[\sup_{t\in[0,T]}\vert S_t-\bar{B}_t\vert\Big] \\
				&\le\E^\omega\Big[\sup_{t\in [0,T]}\vert S_t-\bar{B}_t\vert\mathbf{1}_D\Big]+\E^\omega\Big[\sup_{t\in [0,T]}\vert S_t-\B_t\vert\mathbf{1}_{D^c}\Big],
			\end{aligned}
		\end{equation}
		where the first term on the {\rhs} represents the typical distance, while the second term accounts for the atypical distance.
		
		We will use Proposition \ref{W_1} and Corollary \ref{cor.S_max} to control the typical distance. It is worth noting that the above two results hold true only for sufficiently large $t$.
		Therefore, we need to introduce the following minimal scales.	
		
		To apply Proposition \ref{W_1}, we need to define the minimal scale
		\begin{align}\label{eq.defT2}
			\mathcal{T}_{\delta}^{(2)}(y) :=\begin{cases} \sup\left\{3^{n}: \max_{x\in B(y,3^n)}\mathcal{T}^{(1)}_\delta (x)\ge 3^{\frac{2(n-1)}{3}} \right\}& \text {on~}\{y\in\cltf\},\\ 0 & \text {on~}\{y\notin\cltf\}.\end{cases}
		\end{align}
		From \eqref{equ:random_time}, it follows that $\mathcal{T}_{\delta}^{(1)}(x)\le \mathcal{O}_s(C)$. Furthermore, by \eqref{equ:max} and \eqref{equ:multiply}, we can derive that
		\begin{align*}
			3^{-2(n-1)/3}\max_{x\in B(y,3^n)}\mathcal{T}^{(1)}_\delta (x)\le \mathcal{O}_s\left(C (\log 3^n)^{\frac{1}{s}}3^{-2(n-1)/3}\right),
		\end{align*}
		which implies the existence of a constant $C(d,s,\delta)>0$ and an exponent $\beta(d,s,\delta)>0$ such that  
		\begin{align*}
			3^{-2(n-1)/3}\max_{x\in B(y,3^n)}\mathcal{T}^{(1)}_\delta (x)\le \mathcal{O}_s\left(C3^{-\beta n}\right).
		\end{align*}			
		Therefore, we apply Lemma \ref{lem:scale} to $ \mathcal{T}_{\delta}^{(2)}(y)$ defined in \eqref{eq.defT2} and obtain 
		\begin{equation}\label{equ:T2_decay}
			\mathcal{T}_{\delta}^{(2)}(y)\le \mathcal{O}_{\beta s}(C).
		\end{equation}
		
		To apply Corollary~\ref{cor.S_max}, we need to define the following minimal scale 
		\begin{align}\label{eq.defR1}
			\mathcal{R}^{(1)}(y) :=\begin{cases}
				\sup\left\{3^{n} : \max_{x\in B(y,3^n)}\mathcal{R}(x)\ge3^{\frac{(n-1)}{3}}  \right\} & \text {on~}\{y\in\cltf\},\\
				0 & \text {on~}\{y\notin\cltf\}.
			\end{cases}
		\end{align}
		Similar to the analysis of $\mathcal{T}_{\delta}^{(2)}(y)$, we can derive that 
		\begin{align}\label{eq.R1_tail}
			\mathcal{R}^{(1)}(y)\le \mathcal{O}_{\beta s}(C).    
		\end{align} 
		In combination, we set
		\begin{equation}\label{eq.defT3}
			\mathcal{T}_\delta ^{(3)}(y) :=\max\left\{\mathcal{T}_\delta^{(2)}(y),\mathcal{R}^{(1)}(y)\right\}. 
		\end{equation}
		This defines the minimal scale in Proposition~\ref{The:main_result_finite}, which satisfies \eqref{eq.TailT3} by applying \eqref{equ:Orlicz_sum} to \eqref{equ:T2_decay} and \eqref{eq.R1_tail}.
		
		Next, we will analyze the typical distance and the atypical distance in succession.

		\medskip

		\textit{Step~2: the typical distance.} 
		Recall the increments in Definition\ref{def.CFH} 
		\begin{align*}
			\Delta S_{t_k} = S_{t_{k+1}} - S_{t_k}, \qquad \Delta \B_{t_k} = \B_{t_{k+1}} - \bar{B}_{t_k}.
		\end{align*}
		By the triangle inequality, the distance on $D$ can be decomposed as 
		\begin{multline}\label{eq.CoarseGrain3}
			\E^\omega\Big[\sup_{t\in[0,T]}\vert S_t-\bar{B}_t\vert\mathbf{1}_D	\Big]\leq  \underbrace{\sum_{k=0}^{n-1}\E^\omega\left[\vert \Delta S_{t_k} - \Delta \B_{t_k}\vert\mathbf{1}_D\right]}_{\text{\eqref{eq.CoarseGrain3}-a}} 
			\\ + \underbrace{\E^\omega\Big[\sup_{0 \leq k \leq n-1}\sup_{t \in I_k}\vert S_t-S_{t_k}\vert \mathbf{1}_D\Big]}_{\text{\eqref{eq.CoarseGrain3}-b}} + \underbrace{\E^\omega\Big[\sup_{0 \leq k \leq n-1}\sup_{t \in I_k}\vert \bar{B}_t-\bar{B}_{t_k}\vert\Big]}_{\text{\eqref{eq.CoarseGrain3}-c}}.
		\end{multline}
		
		For the first term \text{\eqref{eq.CoarseGrain3}-a}, it follows from the conditioned expectation and (B) of Definition~\ref{def.CFH} that
		\begin{equation}\label{equ:E_first_decompose}
			\begin{split}
				&\sum_{k=0}^{n-1}\E^\omega \left[\vert \Delta S_{t_k} - \Delta \B_{t_k}\vert\mathbf{1}_{D}\right] \\
				&= \sum_{k=0}^{n-1}\sum_{x \in B(y,T)}\E^\omega \Ll[\vert \Delta S_{t_k}-\Delta\B_{t_k}\vert \Big\vert S_{t_k} = x \Rr] \P^\omega( S_{t_k} = x) \\
				&= \sum_{k=0}^{n-1}\sum_{x \in B(y,T)}W_1\big(p^\omega(\Delta T,\cdot+x,x), \bar{p}(\Delta T,\cdot)\big) \P^\omega( S_{t_k} = x).
			\end{split}
		\end{equation}
		The conditions $T > \mathcal{T}_\delta ^{(3)}(y)$ and $\Delta T \simeq T^{\frac{2}{3}}$, together with the definition\eqref{eq.defT3} and \eqref{eq.defT2}, imply that
		\begin{align*}
			\forall x \in B(y,T), \qquad \Delta T\ge \mathcal{T}^{(1)}_\delta(x).
		\end{align*}
		Thus, it follows from Proposition~\ref{W_1} that
		\begin{equation*}
			\forall x \in B(y,T), \quad W_1(p^\omega(\Delta T,\cdot+x,x), \bar{p}(\Delta T,\cdot))\le K(\Delta T)^\delta.
		\end{equation*}
		Inserting the above inequality back to \eqref{equ:E_first_decompose}, we can obtain
		\begin{equation}\label{equ:E_first}
			\sum_{k=0}^{n-1}\E^\omega \left[\vert \Delta S_{t_k} - \Delta \B_{t_k}\vert\mathbf{1}_{D}\right]\le K n(\Delta T)^\delta. 
		\end{equation}
		
		\smallskip

		For the second term \text{\eqref{eq.CoarseGrain3}-b}, the condition $T > \mathcal{T}_\delta ^{(3)}(y)$ leads to
		\begin{align*}
			\forall x \in B(y,T), \qquad \Delta T \ge \mathcal{R}(y)^2.
		\end{align*} 
		Therefore, on the event $D$, Corollary~\ref{cor.S_max} immediately shows that 
		\begin{align*}
			\forall k = 0,1,\cdots,n-1, \qquad \sup_{t\in I_k}\vert S_t-S_{t_k}\vert\mathbf{1}_D\le \mathcal{O}^\omega_1(C\sqrt{\Delta T}).
		\end{align*}
		By \eqref{equ:max}, we derive
		\begin{align*}
			\sup_{0 \leq k \leq n-1}\sup_{t\in I_k}\vert S_t-S_{t_k}\vert \mathbf{1}_D \leq \mathcal{O}^\omega_1(C\sqrt{\Delta T} \log n),
		\end{align*}
		which yields
		\begin{equation}\label{equ:E_third}
			\E^\omega\Big[\sup_{0 \leq k \leq n-1}\sup_{t\in I_k}\vert S_t-S_{t_k} \vert \mathbf{1}_D\Big]\le C\sqrt{\Delta T}\log n.
		\end{equation}
		
		\smallskip
		
		The last term \text{\eqref{eq.CoarseGrain3}-c} can be controlled without the minimal scale. The maximal inequality of Brownian motion implies
		\begin{equation}\label{equ:max_Brownian}
			\forall u\ge 0, \quad\P\Big(\sup_{t\in I_k}\vert \bar{B}_t-\bar{B}_{t_k}\vert\ge u\Big)\le 2d \mathbb{P}\Big(\vert \bar{B}_{\Delta T}^1\vert \ge \frac{u}{\sqrt{d}}\Big),
		\end{equation}
		which suggests that $\sup_{t\in I_k}\vert \bar{B}_t-\bar{B}_{t_k}\vert$ has a finite sub-gaussian norm (see \eqref{eq.defOrlicz} and \eqref{eq.defOs} for the definition)
		\begin{align*}
			\Big\Vert \sup_{t\in I_k}\vert \bar{B}_t-\bar{B}_{t_k}\vert \Big\Vert_{\psi_2}\le C\sqrt{\Delta T}.
		\end{align*}
		Using \cite[Exercise 2.5.10]{Vershynin2018High}, it follows that 
		\begin{align}\label{equ:E_second}
			\E^\omega\Big[\sup_{0 \leq k \leq n-1}\sup_{t\in I_k}\vert \bar{B}_t-\bar{B}_{t_k}\vert\Big]\le C\sqrt{\Delta T \log n}.
		\end{align}

		\smallskip
		
		Combining \eqref{eq.CoarseGrain3},\eqref{equ:E_first},\eqref{equ:E_second} and \eqref{equ:E_third}, we obtain 
		\begin{equation}\label{equ:typical}
			\E^\omega\Big[\sup_{t\in[0,T]}\vert S_t-\bar{B}_t\vert\mathbf{1}_D\Big]\le C( n\Delta T^\delta +\sqrt{\Delta T}\log n).
		\end{equation}
		
		\smallskip
		\textit{Step~3: the atypical distance.} 
		For the atypical distance, it follows from the triangle inequality that 
		\begin{equation}\label{eq.atypical_triangle}
			\E^\omega\Big[\sup_{t\in[0,T]}\vert S_t - \bar{B}_t\vert\mathbf{1}_{D^c} \Big]\le\E^\omega\Big[\sup_{t\in[0,T]}\vert S_t - y\vert\mathbf{1}_{D^c} \Big]+\E^\omega\Big[\sup_{t\in[0,T]}\vert \bar{B}_t - y\vert\mathbf{1}_{D^c}\Big].
		\end{equation}
		Since $T>\mathcal{T}_\delta^{(3)}(y)\ge \mathcal{R}(y)$, Lemma~\ref{Pro:Barlow} provides us with the following estimate
		\begin{equation}\label{equ:rare}
			\P^\omega(D^c)\le P^\omega_y\Big(\sup_{t\in[0,T]}\vert S_t\vert > T\Big)\le C\exp(-cT).
		\end{equation}
		Combining Lemma \ref{restrict} with the Gaussian maximal inequality and \eqref{equ:rare}, we can derive
		\begin{equation}\label{equ:rare_B}
			\E^\omega\Big[\sup_{t\in[0,T]}\vert \bar{B}_t - y\vert\mathbf{1}_{D^c}\Big] \le C\sqrt{T}\exp(-c T)(1+T)=o(1).
		\end{equation}
		Similarly, for $T > \mathcal{T}_\delta ^{(3)}(y)$, Corollary~\ref{cor.S_max} applies to $\sup_{t\in[0,T]}\vert S_t - y\vert$. Then, Lemma~\ref{restrict} and \eqref{equ:rare} yield
		\begin{equation}\label{equ:rare_S}
			\E^\omega\Big[\sup_{t\in[0,T]}\vert S_t - y\vert\mathbf{1}_{D^c} \Big]\le C T \exp(-c T)(1+T)=o(1).
		\end{equation}
		
		The above two estimates, together with \eqref{equ:split}, \eqref{equ:typical} and \eqref{eq.atypical_triangle}, imply that there exists a finite positive constant $K(d,\p,\delta)$ such that the following estimate holds when $T > \mathcal{T}_\delta ^{(3)}(y)$
		\begin{equation}\label{equ:result_n}
			\E^\omega\Big[\sup_{[0,T]}\vert S_t-\bar{B}_t\vert\Big] \leq K\left(n (\Delta T)^\delta +\sqrt{\Delta T}\log n\right).
		\end{equation}
		Noting that $n = \lfloor T^{\frac{1}{3}} \rfloor$ and $\Delta T \simeq T^{\frac{2}{3}}$, we obtain the desired result.
	\end{proof}
	
	\begin{proof}[Proof of Proposition~\ref{The:main_result_finite}]
		Definition~\ref{def.CFH} introduces a coupling, Proposition~\ref{Pro:distribution} establishes its consistency, and Proposition~\ref{prop.CFH_bound} provides the desired upper bound.
	\end{proof}

	\subsection{Coupling in infinite horizon}\label{infinite}
	We prove Theorem~\ref{The:main_result} in this subsection using an extension of the coupling in Definition~\ref{def.CFH}.

	\subsubsection{Extension of the coupling}\label{Construction of the coupled process}
	\begin{definition}[Coupling in infinite horizon]\label{def.CIH}
		We define $\CIH$$(y)$ as the coupling $(S_{t},\B_{t})_{t\ge 0}$ in infinite horizon constructed in the following way. Set 
		\begin{equation}\label{equ:T_k}
			T_k :=\frac{1}{2}\left(3^k-1\right),\quad k\in \mathbb{N}.
		\end{equation}
		\begin{enumerate}[label=(\Alph*)]
			\item The processes start from $S_0 = \B_0 = y$.
			\item Conditioned on $(S_t,\B_t)_{t\in [0,T_k]}$, we sample $(\Delta_k S_t, \Delta_k \B_t)_{t\in [0,3^k]}$ satisfying
			\begin{align*} 
				\big(S_{T_k} + \Delta_k S_t, S_{T_k} + \Delta_k \B_t\big)_{t\in [0,3^k]} \stackrel{d}{=} \CFH(3^k,\lfloor 3^{\frac{k}{3}}\rfloor, S_{T_k}).
			\end{align*}
			Then we extend the coupled process by 
			\begin{align*} 
				\big(S_{T_k + t},\B_{T_k + t}\big) := \big(S_{T_{k}},\B_{T_{k}}\big) + \big(\Delta_k S_{t}, \Delta_k \B_{t}\big),\quad t\in[0,3^k].
			\end{align*}
		\end{enumerate}
	\end{definition}

	Similar to Proposition~\ref{Pro:distribution}, $\CIH$ is a coupling between the VSRW and Brownian motion with diffusive constant $\bar{\sigma}$ and the proof is omitted here. 
	
	\subsubsection{Distance between $(S_t)_{t \geq 0}$ and $(\B_t)_{t \geq 0}$}\label{Bound the distance} We are now ready to prove the main theorem.

	\begin{proof}[Proof of Theorem 1.1] 
        We prove \eqref{eq.main} using the coupling $\CIH${(y)} defined in Definition~\ref{def.CIH}. 
		
		\textit{Step~1: the minimal scale.}
		Once again, we will use Proposition \ref{The:main_result_finite} to control $\E^\omega\big[\sup_{t\in[0,3^k]}\vert\Delta_k S_t-\Delta_k \B_t\vert\big]$ in the typical case. As noted, we can only apply Proposition \ref{The:main_result_finite} to $\E^\omega\big[\sup_{t\in[0,3^k]}\vert\Delta_k S_t-\Delta_k \B_t\vert\big]$ when $k$ is sufficiently large. Therefore, we need to define a new minimal scale to ensure the estimate in Proposition~\ref{The:main_result_finite}
		\begin{align}\label{eq.defT4}
			\mathcal{T}^{(4)}_\delta(y) :=\begin{cases}
				\sup\left\{3^{n} : \max_{x\in B(y,3^n)}\mathcal{T}_\delta ^{(3)}(x)\ge3^{n-1} \right\} &\text{on~} \{y\in\cltf\},\\
				0 &\text{on~} \{y\notin\cltf\}.
			\end{cases}
		\end{align}
		In line with $\mathcal{T}_\delta^{(2)}(y)$, it follows that there exists an exponent $\beta(d,\p,\delta)>0$ such that
		\begin{equation}\label{equ:T4}
			\mathcal{T}^{(4)}_\delta(y)\le \mathcal{O}_{\beta s}(C).
		\end{equation}
		To account for those terms with small $k$, the final minimal scale is defined as
		\begin{align}\label{eq.defTy}
			\mathcal{T}_\delta(y) :=\left(\mathcal{T}_\delta ^{(4)}(y)\right)^3,
		\end{align}
		which, by \eqref{equ:T4}, satisfies $\mathcal{T}_\delta(y)\le \mathcal{O}_{s}(C)$ with an exponent $s(d,\p,\delta)>0$.
		
		The rest of the proof is under the condition $T>\mathcal{T}_\delta(y)$, which ensures that
		\begin{align}\label{eq.T_final_prop}
			T>\max_{x\in B(y,T)}\mathcal{T}_\delta ^{(3)}(x).
		\end{align}

		\smallskip
		
		\textit{Step~2: the distance.}
		For simplicity, we define two $\omega$-measurable indices
		\begin{equation}\label{eq.defk0k1}
			\begin{split}
				k_0 &:=\inf\left\{k\in\mathbb{N}:T_k>\mathcal{T}^{(4)}_\delta(y)\right\}, \\
				k_1 &:=\inf\left\{k\in\mathbb{N}:T_k>T\right\}.
			\end{split}
		\end{equation}

		The time $T_{k_0}$ serves as the minimal scale to apply Proposition \ref{The:main_result_finite}. Using the triangle inequality and (B) of Definition~\ref{def.CIH}, it follows that
		\begin{equation}\label{eq.CIH_decom}
			\begin{aligned}
				\E^\omega\Big[\sup_{t\in[0,T]}\left\vert S_t-\B_t\right\vert\Big]\le&\E^\omega\Big[\sup_{t\in[0,T_{k_0}]}\left\vert S_t-y\right\vert\Big]+\E^\omega\Big[\sup_{t\in[0,T_{k_0}]}\left\vert\B_t-y\right\vert\Big]\\
				+&\sum_{k=k_{0}}^{k_1-1}\E^\omega\Big[\sup_{t\in[0,3^k]}\left\vert \Delta_k S_t- \Delta_k \B_t\right\vert\Big].
			\end{aligned}
		\end{equation}
		
		For the first term on the {\rhs} of \eqref{eq.CIH_decom}, a simple upper bound for the expectation, based on the number of jumps during $[0,T]$, is sufficient
		\begin{equation}\label{equ:1}
			\E^\omega\Big[\sup_{t\in[0,T_{k_0}]}\left\vert S_t-y\right\vert\Big]\le 2d T_{k_0}.
		\end{equation}
		For the second term, by the Gaussian maximal inequality and integration by parts, we have
		\begin{equation}\label{equ:2}
			\E^\omega\Big[\sup_{t\in[0,T_{k_0}]}\left\vert \B_t-y\right\vert\Big]\le C\sqrt{ T_{k_0}}.
		\end{equation}
		For each summand in the third term on the {\rhs} of \eqref{eq.CIH_decom}, we can use the same technique as in \eqref{equ:split} to split the expectation into contributions from the typical case and the atypical case
		\begin{equation}\label{equ:infty_triangle2}
			\begin{aligned}
				\E^\omega\Big[\sup_{t\in[0,3^k]}\left\vert \Delta_k S_t-\Delta_k \B_t \right\vert\Big]
				&\le \E^\omega\Big[\sup_{t\in[0,3^k]}\left\vert \Delta_k S_t-\Delta_k \B_t\right\vert\mathbf{1}_{\left\{S_{T_k}\in B(y,T_k)\right\}}\Big]\\
				& \quad+\E^\omega\Big[\sup_{t\in[0,3^k]}\left\vert \Delta_k S_t\right\vert\mathbf{1}_{\left\{S_{T_k} \notin B(y,T_k)\right\}}\Big]\\
				& \quad+\E^\omega\Big[\sup_{t\in[0,3^k]}\left\vert \Delta_k \B_t\right\vert\mathbf{1}_{\left\{S_{T_k} \notin B(y,T_k)\right\}}\Big].
			\end{aligned}
		\end{equation}
		
		For the typical case, it follows from the conditioned expectation and (B) of Definition~\ref{def.CIH} that 
		\begin{equation}\label{equ:infty_typical}
			\begin{split}
				&\E^\omega\Big[\sup_{t\in[0,3^k]}\left\vert \Delta_k S_t-\Delta_k \B_t\right\vert\mathbf{1}_{\left\{S_{T_k}\in B(y,T_k)\right\}}\Big]\\
				&=\sum_{x \in B(y,T_k)}\E^\omega\Big[\sup_{t\in[0,3^k]}\left\vert \Delta_k S_t-\Delta_k \B_t\right\vert  \Big \vert S_{T_k} = x \Big] \P^\omega\Ll(S_{T_k} = x\Rr).
			\end{split}
		\end{equation}
		By (A) of Definition~\ref{def.CIH}, $( x + \Delta_k S_t, x +\Delta_k \B_t)_{t \in [0,3^k]}$ is sampled from $\CFH$($3^k,\lfloor 3^{\frac{k}{3}}\rfloor, x$). 
		Furthermore, from \eqref{eq.defT4} and \eqref{eq.defk0k1}, we have
		\begin{align*}
			\forall k\ge k_0, \qquad 3^k>\max_{x\in B(y,T_k)}\mathcal{T}_\delta ^{(3)}(x).
		\end{align*}
		Thus, Proposition~\ref{The:main_result_finite} implies that 
		\begin{equation}
			\forall k\ge k_0, x \in B(y,T_k), \qquad \E^\omega\Big[\sup_{t\in[0,3^k]}\left\vert \Delta_k S_t-\Delta_k \B_t\right\vert \Big \vert S_{T_k} = x \Big]\le K 3^{(\frac{1}{3}+\delta)k}.
		\end{equation}
		Inserting the above inequality back to \eqref{equ:infty_typical}, we can obtain
		\begin{equation}\label{eq.infty_typical}
			\forall k\ge k_0,\qquad\E^\omega\Big[\sup_{t\in[0,3^k]}\left\vert \Delta_k S_t-\Delta_k \B_t\right\vert\mathbf{1}_{\left\{S_{T_k}\in B(y,T_k)\right\}}\Big]\le K3^{(\frac{1}{3}+\delta)k}.
		\end{equation}
		\smallskip
		
		For the atypical case, by $T_k\ge\mathcal{T}_\delta ^{(3)}(y)>\mathcal{R}(y)$, it follows from Lemma~\ref{Pro:Barlow} that 
		\begin{equation*}
			\forall k\ge k_0,\qquad\P^\omega\big(S_{T_k}\notin B(y,T_k)\big)\le P^\omega_y\Big(\sup_{t\in[0,T_k]}\vert S_t\vert>T_k\Big)\le C\exp\left(-c T_k\right).
		\end{equation*}
		Therefore, by the same reasoning as in \eqref{equ:rare_B} and \eqref{equ:rare_S}, for large $k$ we have 
		\begin{equation}\label{equ:infty_remaining}
			\begin{aligned}
				&\E^\omega\Big[\sup_{t\in[0,3^k]}\left\vert \Delta_k S_t\right\vert\mathbf{1}_{\left\{S_{T_k}\notin B(y,T_k)\right\}}\Big]= o(1),\\
				&\E^\omega\Big[\sup_{t\in[0,3^k]}\left\vert \Delta_k \B_t\right\vert\mathbf{1}_{\left\{S_{T_k}\notin B(y,T_k)\right\}}\Big]= o(1).
			\end{aligned}
		\end{equation}
		Together with \eqref{equ:infty_triangle2},\eqref{eq.infty_typical} and \eqref{equ:infty_remaining}, we can derive that
		\begin{equation}\label{equ:3}
			\E^\omega\Big[\sup_{t\in[0,3^k]}\left\vert \Delta_k S_t-\Delta_k \B_t \right\vert\Big]\le K3^{(\frac{1}{3}+\delta)k}.
		\end{equation}

		Combining \eqref{eq.CIH_decom}, \eqref{equ:1},\eqref{equ:2} and \eqref{equ:3}, we obtain 
		\begin{equation}\label{eq.pre_conclusion}
			\E^\omega\Big[\sup_{t\in[0,T]}\left\vert S_t-\B_t\right\vert\Big]\le CT_{k_0}+KT_{k_1}^{\frac{1}{3}+\delta}.
		\end{equation}
		Finally, we need to control $T_{k_0}$ and $T_{k_1}$. It follows from the definition of \eqref{eq.defk0k1} that 
		\begin{align}\label{eq.Tk1}
			T < T_{k_1}\le 3T, \qquad \mathcal{T}^{(4)}_\delta(y) < T_{k_0} \leq 3 \mathcal{T}^{(4)}_\delta(y).
		\end{align}
		Combining this with $\mathcal{T}_\delta(y)$ defined in \eqref{eq.defTy} and the condition $T > \mathcal{T}_\delta(y)$, we can derive that
		\begin{align}\label{eq.Tk2}
			T_{k_0}^3\le \Ll(3 \mathcal{T}^{(4)}_\delta(y)\Rr)^3 = 27 \mathcal{T}_\delta(y) \leq 27 T.
		\end{align}
		We substitute \eqref{eq.Tk1} and \eqref{eq.Tk2} back into \eqref{eq.pre_conclusion} and conclude that
		\begin{align}
			\E^\omega\Big[\sup_{t\in[0,T]}\left\vert S_t-\B_t\right\vert\Big] \le KT^{\frac{1}{3}+\delta}.
		\end{align}
		
	\end{proof}

	\section{Identification of limit constant in LIL}\label{sec.LIL}
	In this section, we will utilize Theorem \ref{The:main_result} to prove Corollary \ref{Cor:LIL_constant}. First, we recall the LIL for standard Brownian motion under the $\ell_p$-norm for $1\le p<\infty$. For details, see \cite[Corollary~2]{kuelbs1973law}.

	\begin{lemma}\label{Pro:LIL}
		Given $d$-dimensional standard Brownian motion $(B_t)_{t \geq 0}$, we have
		\begin{align}\label{eq.LIL_BM}
			\mathbb{P}\Big(\limsup_{t\to \infty}\frac{\vert B_t\vert_p }{\sqrt{2t\log\log t}}= \gamma_{d,p}\Big)=1.
		\end{align}
		Here the constant $\gamma_{d,p}$ is defined in \eqref{eq.defLIL_para} and is characterized by 
		\begin{align}\label{eq.defgamme}
			\gamma_{d,p} = \sup_{x \in \R^d: \vert x\vert_2=1}\vert x\vert_p .
		\end{align}
	\end{lemma}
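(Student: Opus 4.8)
The plan is to reduce the statement to the classical Strassen law of the iterated logarithm for $d$-dimensional Brownian motion, which asserts that almost surely the cluster set of the rescaled paths $\{(2t\log\log t)^{-1/2} B_{t s}\}_{s \in [0,1]}$ as $t \to \infty$ is exactly the Strassen ball $\mathcal{K} \subset C([0,1];\R^d)$ of absolutely continuous paths $h$ with $h(0)=0$ and $\int_0^1 |\dot h(s)|_2^2\, \d s \le 1$. First I would observe that the map $\Phi: C([0,1];\R^d) \to \R$ given by $\Phi(f) = |f(1)|_p$ is continuous with respect to the sup-norm topology, so the $\limsup$ of $\Phi$ evaluated along the rescaled paths equals the maximum of $\Phi$ over the cluster set, i.e.
\begin{align*}
	\limsup_{t\to\infty}\frac{|B_t|_p}{\sqrt{2t\log\log t}} = \sup_{h \in \mathcal{K}} |h(1)|_p \qquad \text{a.s.}
\end{align*}

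Next I would evaluate the deterministic variational quantity $\sup_{h \in \mathcal{K}} |h(1)|_p$. For $h \in \mathcal{K}$, Cauchy--Schwarz gives $|h(1)|_2 = \big|\int_0^1 \dot h(s)\,\d s\big|_2 \le \big(\int_0^1 |\dot h(s)|_2^2\,\d s\big)^{1/2} \le 1$, with equality attainable by the linear path $h(s) = s v$ for any unit vector $v \in \R^d$. Hence $\{h(1) : h \in \mathcal{K}\}$ is exactly the closed Euclidean unit ball of $\R^d$, and therefore
\begin{align*}
	\sup_{h\in\mathcal{K}} |h(1)|_p = \sup_{x \in \R^d:\, |x|_2 \le 1} |x|_p = \sup_{x \in \R^d:\, |x|_2 = 1} |x|_p = \gamma_{d,p},
\end{align*}
where the last step uses that $|\cdot|_p$ is a norm (so the sup over the ball is attained on the sphere). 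This is precisely the characterization \eqref{eq.defgamme}.

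Finally I would verify that this supremum matches the closed-form expression $\gamma_{d,p} = \max\{d^{1/p - 1/2}, 1\}$ from \eqref{eq.defLIL_para}, by a short Lagrange-multiplier or convexity argument: for $p \in [1,2]$ the function $x \mapsto |x|_p$ on the sphere is maximized at the ``diagonal'' point $x = d^{-1/2}(1,\dots,1)$, giving $|x|_p = d^{1/p - 1/2} \ge 1$; for $p \ge 2$ it is maximized at a coordinate vector, giving value $1 \ge d^{1/p-1/2}$. I do not anticipate a genuine obstacle here — the only point requiring mild care is invoking the Strassen functional LIL (rather than merely the one-dimensional LIL applied coordinatewise, which would give the wrong, smaller constant $1$ for $p<2$) and confirming that the image of the Strassen ball under evaluation at time $1$ is the full Euclidean unit ball; both are standard. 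One could alternatively cite \cite{kuelbs1973law} directly for \eqref{eq.LIL_BM} and only carry out the elementary computation of $\gamma_{d,p}$, which is the form the paper seems to prefer.
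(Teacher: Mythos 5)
Your proposal is correct, and it is a genuinely more self-contained route than the paper's. The paper simply cites Kuelbs \cite[Corollary~2]{kuelbs1973law} for the statement \eqref{eq.LIL_BM} and then, in the accompanying remark, verifies the closed form of the constant $\gamma_{d,p}$ by the same elementary Jensen/norm-comparison argument you give at the end. You instead derive the LIL from Strassen's functional law: you correctly observe that $\limsup_{t\to\infty}(2t\log\log t)^{-1/2}|B_t|_p$ equals $\sup_{h\in\mathcal{K}}|h(1)|_p$ because the map $f\mapsto |f(1)|_p$ is sup-norm continuous, and then show via Cauchy--Schwarz that the image of the Strassen ball under evaluation at time $1$ is precisely the closed Euclidean unit ball, which yields the characterization \eqref{eq.defgamme}. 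Your caveat that a coordinatewise one-dimensional LIL would give the wrong constant for $p<2$ is well-placed: the correct value $d^{1/p-1/2}$ for $1\le p<2$ arises from several coordinates being simultaneously large in the right proportions, and this is exactly what the functional LIL (or a genuinely multivariate version such as Kuelbs's) captures. In short, the two arguments agree on the elementary computation of $\gamma_{d,p}$; the difference is that you prove the LIL from Strassen's theorem rather than citing it as a black box, at the cost of invoking a stronger result. You explicitly note the alternative of citing \cite{kuelbs1973law} directly, which is what the paper does.
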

	\begin{remark}
		The corresponding statement for the $\ell_2$-norm can also be found in \cite[Exercise 1.21 of Chapter 2]{revuz2013continuous}. 
		Additionally, the characterization in \eqref{eq.defgamme} is an elementary exercise:
		\begin{itemize}[label=---]
			\item For $1 \leq p \leq 2$, by Jensen's inequality, we have
			\begin{equation*}
				\vert x\vert_p \leq d^{\frac{1}{p}-\frac{1}{2}}|x|_2=d^{\frac{1}{p}-\frac{1}{2}},
			\end{equation*} 
			where the equality attained when $x = (\frac{1}{\sqrt{d}}, \frac{1}{\sqrt{d}}, \cdots, \frac{1}{\sqrt{d}})$.
			\item For $2 \leq p < \infty$, we have $\vert x \vert_p \leq \vert x \vert_2 = 1$, where the equality is attained when $x = (1,0,0, \cdots, 0)$.  
		\end{itemize}
		
	\end{remark}

	\begin{proof}[Proof of Corollary 1.2]
		For convenience, we use the following shorthand in the proof
		\begin{align*}
			\phi(t):=2t\log\log t.
		\end{align*}
		The proof is divided into two parts: VSRW and CSRW. 
		
		\textit{Step~1: LIL for VSRW.} We prove the statement \eqref{eq.LIL_VSRW} using the coupling $(S_t, \B_t)_{t \geq 0}$ in Theorem~\ref{The:main_result}.
		Using \eqref{eq.LIL_BM}, we can derive that
		\begin{align*}
			\P\Big(\limsup_{t\to\infty }\frac{\vert \bar{B}_t\vert_p}{\sqrt{\phi(t)}} =\gamma_{d,p}\bar{\sigma}\Big)=\P\Big( \limsup_{t\to \infty}\frac{\vert  B_t\vert_p }{\sqrt{\phi(t)}} = \gamma_{d,p}\Big)=1.
		\end{align*}
		By the triangle inequality, it follows that
		\begin{align*}
			\vert \bar{B}_t\vert_p-\vert S_t-\bar{B}_t\vert_p\le\vert S_t\vert_p
			\le \vert \bar{B}_t\vert_p+\vert S_t-\bar{B}_t\vert_p.
		\end{align*}
		Therefore, to prove Corollary \ref{Cor:LIL_constant}, it is sufficient to show that
		\begin{equation}\label{equ:limsup}
			\P^\omega\Big(\limsup_{t\to \infty}\frac{\vert S_t-\bar{B}_t\vert_p}{\sqrt{\phi(t)}}=0\Big)=1.
		\end{equation}
		
		Since all the vector norms in $\R^d$ are equivalent, it follows that $\vert \cdot \vert_p \leq C_{d,p} \vert \cdot \vert_2$.  Then, for every $\epsilon>0$, Theorem \ref{The:main_result} and the Markov inequality imply that, for sufficiently large $k$,
		\begin{align*}
			\P^\omega\Big(\sup_{t\in[T_k,T_{k+1}]}\vert S_t-\bar{B}_t\vert_p>\epsilon \sqrt{\phi(T_k)}\Big)&\le \P^\omega\Big(\sup_{t\in[0,T_{k+1}]} C_{d,p}\vert S_t-\bar{B}_t\vert_2>\epsilon \sqrt{\phi(T_k)}\Big)\\
			&\le \frac{KT_{k+1}^{\frac{1}{3}+\delta}}{\epsilon C_{d,p}\sqrt{\phi(T_k)}},
		\end{align*}
		where $T_k=\frac{1}{2}(3^k-1)$ as defined in \eqref{equ:T_k}.
		Because $2 T_k < T_{k+1} < 3T_k$ and $\delta$ can be made arbitrarily small, we derive that
		\begin{equation*}
			\sum_{k=1}^{\infty}\frac{T_{k+1}^{\frac{1}{3}+\delta}}{\epsilon C_{d,p}\sqrt{\phi(T_k)}}\le K\sum_{k=1}^{\infty}\frac{T_k^{\frac{1}{3}+\delta}}{\epsilon C_{d,p}\sqrt{\phi(T_k)}}\le (\epsilon  C_{d,p})^{-1}K\sum_{k=1}^{\infty}T_k^{-\frac{1}{6}+\delta}<\infty.
		\end{equation*}
		By the Borel--Cantelli lemma, we have
		\begin{equation}\label{eq.borel-Cantelli}
			\P^\omega\Big(\limsup_{t\to \infty}\frac{\vert S_t-\bar{B}_t\vert_p}{\sqrt{\phi(t)}}\le \epsilon\Big)=1.
		\end{equation}
		Since \eqref{eq.borel-Cantelli} holds and $\epsilon>0$ can be arbitrarily small, this indicates that \eqref{equ:limsup} also holds. Thus, on the event $\{y\in \cltf\}$, we conclude that
		\begin{equation*}
			P^\omega_y\Big(\limsup_{t\to\infty}\frac{\vert S_t\vert_p}{\sqrt{\phi(t)}}=\gamma_{d,p} \bar{\sigma}\Big)=1.
		\end{equation*}

		\smallskip
		
		\textit{Step~2: LIL for CSRW.} The transition from VSRW to CSRW requires a time change of the process, which is classical; we refer to \cite[Section~6.2]{ABDH}. For each $x \in \mathbb{Z}^{d}$, let $\tau_{x}: \Omega \rightarrow \Omega$ be the ``shift by $x$'' defined by $\left(\tau_{x} \omega\right)_{e}=\omega_{x+e}$. Recall that $\omega_x=\sum_{z:z\sim x}\omega\left(\{x,z\}\right)$. We set $F(\omega) :=\omega_0$ and define
		\begin{equation*}
			A_t :=\int_{0}^{t}\omega_{S_u} \,\d u=\int_0^t F(\tau_{S_u}\omega)\,\d u.
		\end{equation*}
		We define the inverse of $A_t$ as
		\begin{align*}
			\mathfrak{a}_t :=\inf \{s\ge 0: A_s\ge t\},
		\end{align*}
		then the time-changed process 
		\begin{equation}\label{equ:time_changed}
			X_t := S_{\mathfrak{a}_t}
		\end{equation}
		follows the law of CSRW. 
		
		Let $\P_y$ be the conditioned probability measure defined as 
		\begin{equation*}
			\mathbb{P}_{y}(A):=\mathbb{P}_\p\left(A \mid y\in\cltf\right), \quad A \in \mathcal{F}.
		\end{equation*}
		By the equation between (6.5) and (6.6) of \cite{ABDH}, along with the shift invariance property of percolation, we have 
		\begin{equation}\label{equ:ergodic}
			\lim_{t\to \infty}\frac{A_t}{t}=2d\E_\p\left[\omega\left(\{0,x\}\right)\vert 0\in\cltf\right],\qquad \P_y\times P^\omega_y- a.s.,
		\end{equation}
		where $x$ is a neighboring vertex of $0$.
		
		The time change formula \eqref{equ:ergodic} implies
		\begin{equation}\label{equ:inverse}
			\lim_{t\to\infty}\sqrt{\frac{\phi(\mathfrak{a}_t)}{\phi(t)}}= \left(2d\E_\p\left[\omega\left(\{0,x\}\right)\vert 0\in\cltf\right]\right)^{-\frac{1}{2}} = \alpha_{d,\p}, \quad \P_y\times P^\omega_y- a.s..
		\end{equation}
		It is worth noting that \eqref{equ:inverse} is a quenched result.
		
		Combining \eqref{equ:time_changed} and \eqref{equ:inverse}, we show that, on the event $\{y\in\cltf\}$,
		\begin{align*}
			&P^\omega_y \Big(\limsup_{t\to\infty}\frac{\vert \tilde{S_t}\vert_p}{\sqrt{\phi(t)}}= \alpha_{d,\p}\gamma_{d,p} \bar{\sigma}\Big)\\
			=&P^\omega_y\Big(\limsup_{t\to\infty}\frac{\vert S_{\mathfrak{a}_t}\vert_p}{\sqrt{\phi(\mathfrak{a}_t)}}\sqrt{\frac{\phi(\mathfrak{a}_t)}{\phi(t)}}=\alpha_{d,\p}\gamma_{d,p} \bar{\sigma}\Big)\\
			=&P^\omega_y\Big(\limsup_{t\to\infty}\frac{\vert S_{\mathfrak{a}_t}\vert_p}{\sqrt{\phi(\mathfrak{a}_t)}}=\gamma_{d,p} \bar{\sigma}\Big)=1,
		\end{align*}
		which concludes the proof.
	\end{proof}
	
	\appendix
	\section{The homogenized matrix is a scalar matrix}
	The homogenized matrix $\ab$ in \cite[Definition~5.1]{armstrong2018elliptic} is a positive diagonal matrix. Here, we provide a detailed proof, which arises from the symmetry in the description of the variational formula. Thus,  we state a more general version. The notation $C_0(\cu_{m})$ stands for the functions taking value $0$ on $\partial \cu_m$.
	
	\begin{lemma}\label{lem.scalar}
		Let $\{\omega(\{x,y\})\}_{x,y \in \Zd}$ be a positive random conductance with law invariant under permutations and reflections of coordinates. If the homogenized matrix
		\begin{align*}
			q \cdot \ab q  := \lim_{m \to \infty} \E\Big[  \inf_{v \in \ell_{q} +  C_0(\cu_{m})}  \frac{1}{\vert \cu_{m} \vert}\sum_{x,y\in \cu_{m}}\frac{1}{2} \omega(\{x,y\}) (v(y)-v(x))^2\Big],
		\end{align*}
		is well-defined, then there exists a scalar constant $C^* \geq 0$ such that 
		\begin{align*}
			\ab = C^* \mathrm{Id}.
		\end{align*}
	\end{lemma}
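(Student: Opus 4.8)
The plan is to exploit the invariance of the variational formula under the symmetries of the conductance law, thereby reducing the statement to the elementary fact that a quadratic form on $\R^d$ that is invariant under all coordinate permutations and all coordinate sign changes must be a scalar multiple of the identity. Nonnegativity of the scalar will come for free from the nonnegativity of the Dirichlet energies.

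First I would fix notation for the symmetry group. Let $G\subset O(d)$ be the (hyperoctahedral) group generated by the coordinate permutations and the coordinate sign flips $D_i\colon e_i\mapsto -e_i$, $e_j\mapsto e_j$ for $j\neq i$. Every $R\in G$ maps $\Zd$ bijectively onto itself, carries nearest-neighbor edges to nearest-neighbor edges, and fixes the cube $\cu_m$ (centered at the origin) together with its boundary $\partial\cu_m$. For $R\in G$ and a conductance field $\omega$, set $(R\cdot\omega)(\{a,b\}):=\omega(\{R^{-1}a,R^{-1}b\})$; the hypothesis that the law of $\omega$ is invariant under permutations and reflections of coordinates says exactly that $R\cdot\omega\stackrel{d}{=}\omega$ for every $R\in G$. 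Next I would carry out the change of variables in the variational formula. Fix $q\in\R^d$ and $R\in G$, and let $v\in\ell_q+C_0(\cu_m)$ be a competitor. Since $R$ is orthogonal, $\ell_q\circ R^{-1}=\ell_{Rq}$, and because $R$ preserves $\partial\cu_m$ the function $w:=v\circ R^{-1}$ satisfies $w-\ell_{Rq}=(v-\ell_q)\circ R^{-1}\in C_0(\cu_m)$; the map $v\mapsto w$ is a bijection between the two competitor classes. Substituting $x=R^{-1}x'$, $y=R^{-1}y'$ and using $v(y)-v(x)=w(y')-w(x')$ together with $\omega(\{x,y\})=(R\cdot\omega)(\{x',y'\})$, the energy of $v$ with respect to $\omega$ equals the energy of $w$ with respect to $R\cdot\omega$. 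Taking the infimum, then the expectation, and using $R\cdot\omega\stackrel{d}{=}\omega$, one obtains for every $m$
$$\E\Big[\inf_{v\in\ell_q+C_0(\cu_m)}\frac{1}{\vert\cu_m\vert}\sum_{x,y\in\cu_m}\tfrac12\,\omega(\{x,y\})\,(v(y)-v(x))^2\Big]=\E\Big[\inf_{v\in\ell_{Rq}+C_0(\cu_m)}\frac{1}{\vert\cu_m\vert}\sum_{x,y\in\cu_m}\tfrac12\,\omega(\{x,y\})\,(v(y)-v(x))^2\Big],$$
and passing to the limit $m\to\infty$ gives $q\cdot\ab q=(Rq)\cdot\ab(Rq)$ for all $q\in\R^d$ and all $R\in G$.

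Finally I would read off the structure of $\ab$. Since $\ab$ represents a quadratic form it is symmetric, and polarizing the identity above yields $p\cdot\ab q=(Rp)\cdot\ab(Rq)$ for all $p,q\in\R^d$ and $R\in G$. Taking $R=D_i$, $p=e_i$, $q=e_j$ with $j\neq i$ gives $\ab_{ij}=(-e_i)\cdot\ab e_j=-\ab_{ij}$, so all off-diagonal entries vanish; taking $R$ equal to the transposition of coordinates $i$ and $j$ and $p=q=e_i$ gives $\ab_{ii}=\ab_{jj}$, so all diagonal entries equal a common value $C^*$. Since each Dirichlet energy in the variational formula is nonnegative, $C^*=e_1\cdot\ab e_1=\lim_{m\to\infty}\E[\,\cdot\,]\ge 0$. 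Hence $\ab=C^*\mathrm{Id}$ with $C^*\ge 0$.

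The only genuinely delicate point is the bookkeeping in the change of variables: one must check that $G$ fixes the cube $\cu_m$ and its boundary, that the affine datum transforms as $\ell_q\mapsto\ell_{Rq}$, that the boundary class $C_0(\cu_m)$ is preserved, and that the relabeling $\omega\mapsto R\cdot\omega$ is precisely the one under which the law is assumed invariant. Once these are in place, the remainder is linear algebra.
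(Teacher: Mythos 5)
Your proposal is correct and follows essentially the same route as the paper's proof: change of variables under the hyperoctahedral symmetries to get invariance of the quadratic form, polarization, and then the two specializations (reflection to kill off-diagonal entries, transposition to equalize diagonal entries). The only difference is that you spell out the bookkeeping of the change of variables (boundary class, affine datum, relabeling of $\omega$) in more detail, which the paper leaves implicit.
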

	\begin{proof}
		Let $\hat{x}$ denote permutations or reflections of coordinates where $x\in\R^d$, and define $\hat{f}(x):=f(\hat{x})$. 
        By the change of variable and the symmetry of $\square_m$, it follows that 
		\begin{align*}
			q \cdot \ab q  &= \lim_{m \to \infty} \E\Big[  \inf_{v \in \ell_{q} + C_0(\cu_{m})}  \frac{1}{\vert \cu_{m} \vert}\sum_{x,y\in \cu_{m}}\frac{1}{2} \omega(\{x,y\}) (v(y)-v(x))^2\Big]\\
			&= \lim_{m \to \infty} \E\Big[  \inf_{v \in \ell_{q} + C_0(\cu_{m})}  \frac{1}{\vert \cu_{m} \vert}\sum_{x,y\in \cu_{m}}\frac{1}{2} \omega(\{\hat{x},\hat{y}\}) (v(\hat{y})-v(\hat{x}))^2\Big]\\
			&= \lim_{m \to \infty} \E\Big[  \inf_{\hat{v} \in \ell_{q} + C_0(\cu_{m})}  \frac{1}{\vert \cu_{m} \vert}\sum_{x,y\in \cu_{m}}\frac{1}{2} \omega(\{x,y\})  (\hat{v}(y)-\hat{v}(x))^2\Big]\\
			&= \hat{q} \cdot \ab \hat{q},
		\end{align*}
		where the third equality follows from the definition $\hat{v}(x):=v(\hat{x})$ and the invariance of law $\{\omega(\{x,y\})\}_{x,y \in \Zd}$. 
		
		We further implement polarization and obtain 
		\begin{align}\label{eq.IdPolar}
			q'\cdot \ab q = \hat{q'} \cdot \ab \hat{q},\qquad\forall q,q'\in\R^d.
		\end{align}
		Let $\{e_i\}_{1 \leq i \leq d}$ be the canonical basis in $\R^d$. Taking $q' = e_i, q = e_j, i \neq j$, and letting $\hat{x}$ be the reflection of the $i$-th coordinate in \eqref{eq.IdPolar}, we have
		\begin{align*}
			\ab_{ij} = -\ab_{ij}, \qquad \forall 1 \leq i \neq j \leq d, 
		\end{align*}
		which implies that $\ab$ is diagonal.
		
		Moreover, we take $q=q'=e_i$ and $\hat{x}$ as the permutation between the $i$-th and $j$-th coordinates ($i \neq j$) in \eqref{eq.IdPolar}, which implies 
		\begin{align*}
			\ab_{ii} = \ab_{jj}, \qquad \forall 1 \leq i \neq j \leq d.
		\end{align*}
		This concludes that $\ab = C^* \mathrm{Id}$ with $C^* \geq 0$. 
	\end{proof}
	\section{\texorpdfstring{The existence of the coupling $\CFH$}{The existence of the coupling CFH}}
	\begin{lemma}\label{lem.existence}
		The coupling $\CFH$$(T,n,y)$ in Definition~\ref{def.CFH} is realizable. 
	\end{lemma}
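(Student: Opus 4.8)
The plan is to assemble $\CFH(T,n,y)$ by composing standard objects on a common Polish probability space: an optimal $W_1$ coupling for the increments in step~(B), a finite composition of probability kernels for the skeleton, and Markov bridges for the trajectories in step~(C). First I would handle the increment kernel. Since $\Delta T = T/n$ is fixed, it suffices to treat one map: for each $x\in\cltf$ the measures $p^\omega(\Delta T, x+\cdot, x)$ (supported on the countable set $\cltf$) and $\bar{p}(\Delta T,\cdot)$ both have finite first moment, so $W_1$ between them is finite and \cite[Theorem~4.1]{Villani2009Optimal} yields a coupling $\pi^*_x\in\varPi(p^\omega(\Delta T, x+\cdot, x),\bar{p}(\Delta T,\cdot))$ attaining it. Because $\cltf$ is countable, choosing one such $\pi^*_x$ for each $x$ needs no measurable-selection theorem, and $x\mapsto Q(x,\cdot):=\pi^*_x$ (extended arbitrarily off $\cltf$) is automatically a probability kernel from $\Zd$ to $\Zd\times\R^d$; this is exactly the conditional law of $(\Delta S_{t_k},\Delta\B_{t_k})$ given $S_{t_k}=x$ demanded in~(B), and it depends on the past only through $S_{t_k}$.

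Next I would build the skeleton $(S_{t_k},\B_{t_k})_{0\le k\le n}$ by iterating $Q$ from the deterministic start $(S_0,\B_0)=(y,y)$, setting $(S_{t_{k+1}},\B_{t_{k+1}})=(S_{t_k},\B_{t_k})+(\Delta S_{t_k},\Delta\B_{t_k})$ at each step. The consistency point to verify is that this stays legitimate: the first marginal of $Q(x,\cdot)$ is $p^\omega(\Delta T, x+\cdot, x)$, which charges only displacements $z$ with $x+z\in\cltf$, so $S_{t_{k+1}}\in\cltf$ almost surely and the next kernel is applied to an admissible argument. As this is a finite ($n$-fold) composition of probability kernels on the Polish space $\Zd\times\R^d$, it defines a probability measure $\mu_{\mathrm{skel}}$ on $(\Zd\times\R^d)^{n+1}$.

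For step~(C) I would realize the two trajectories by conditionally independent Markov bridges. For the VSRW one has $p^\omega(\ell,b,a)>0$ for all $\ell>0$ and $a,b\in\cltf$, by irreducibility of the walk on the connected infinite cluster, and the jump rates being bounded (at most $1$ per edge, with degree at most $2d$) guarantees non-explosion and c\`adl\`ag paths; hence the Doob $h$-transform bridge $\mathbb{B}^S_{a,b,\ell}$ (the VSRW started at $a$ conditioned to equal $b$ at time $\ell$) is well defined, and $(a,b)\mapsto\mathbb{B}^S_{a,b,\ell}$ is trivially a kernel since its domain is countable. Concatenating $\mathbb{B}^S_{s_k,s_{k+1},\Delta T}$ over the intervals $I_k$ via the Markov property produces, for each skeleton value, the law on $D([0,T],\cltf)$ that step~(C) names ``the VSRW conditioned on $(S_{t_k})_{0\le k\le n}$''. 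The identical construction with the classical Brownian bridge of diffusive constant $\bar{\sigma}^2$ gives a kernel into $C([0,T],\R^d)$, continuous in the skeleton values. Composing $\mu_{\mathrm{skel}}$ with the product of these two bridge kernels then produces a probability measure on the Polish path space $D([0,T],\Zd)\times C([0,T],\R^d)$, which is the required realization of $\CFH(T,n,y)$.

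I do not expect a genuinely hard step: the whole argument is bookkeeping about composing probability kernels on Polish spaces. The one place needing a real (if short) argument rather than pure formalism is the strict positivity $p^\omega(\ell,b,a)>0$ underpinning the bridge construction of step~(C) --- without it the Doob transform is ill-posed --- and this follows from irreducibility of the VSRW on the almost surely connected infinite cluster together with its non-explosiveness, so it is a minor obstacle rather than a substantial one.
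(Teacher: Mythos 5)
Correct, and your argument for step~(B) is essentially the paper's: both invoke \cite[Theorem~4.1]{Villani2009Optimal} to pick an optimal coupling $\pi^*_x$ for each $x\in\cltf$ and glue them along the countable cluster into the required conditional law, so no measurable-selection theorem is needed. The paper contents itself with the remark that ``it suffices to justify (B),'' so your additional bridge construction for step~(C), including the positivity and non-explosion checks underpinning the Doob transform, is a correct but optional elaboration of what the authors treat as routine.
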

	\begin{proof}
		 It suffices to justify (B) in Definition~\ref{def.CFH}, which is a nontrivial step. 
		 Assuming that $\big(S_{t_i},\B_{t_i}\big)_{0\le i\le k}$ has already been constructed, we sample random variables $\big\{\big(\Delta S_{t_k}^{(x)}, \Delta\B^{(x)}_{t_k}\big) : x \in \cltf\big\}$ such that the following conditions hold.
		\begin{itemize}[label=---]
			\item For every $x \in \cltf$,  the vector $\big(\Delta S_{t_k}^{(x)}, \Delta\B^{(x)}_{t_k}\big)$ represents the coupling of the increment  $\Ll(p^\omega(\Delta T,x +\cdot,x), \bar{p}(\Delta T,\cdot) \Rr)$ to minimize the $W_1$ distance, i.e.,
			\begin{equation*} 
				\E^\omega \big[\vert \Delta S_{t_k}^{(x)}-\Delta\B^{(x)}_{t_k}\vert \big]=W_1\big(p^\omega(\Delta T,x+\cdot,x), \bar{p}(\Delta T,\cdot) \big).
			\end{equation*}
			\item The couplings $\{\big(\Delta S_{t_k}^{(x)}, \Delta\B^{(x)}_{t_k}\big)\}_{ x \in \cltf}$ are independent and are also independent of $\big(S_{t_i},\B_{t_i}\big)_{0\le i\le k}$.
		\end{itemize}
		The existence of the optimal coupling is guaranteed; see Definition~\ref{def.Wp}. We then define the increment
		\begin{equation}\label{eq.defDeltaS}
			\big(\Delta S_{t_k}, \Delta\B_{t_k}\big) := \sum_{x \in \cltf} \big(\Delta S_{t_k}^{(x)}, \Delta\B^{(x)}_{t_k}\big) \Ind{S_{t_k} = x}.
		\end{equation}
		Conditioned on $\big(S_{t_i},\B_{t_i}\big)_{0\le i\le k}$, it follows directly that
        \begin{equation*}
            \big(\Delta S_{t_k}, \Delta\B_{t_k}\big)  \in \varPi\big(p^\omega(\Delta T,S_{t_k} +\cdot,S_{t_k}), \bar{p}(\Delta T,\cdot) \big).
        \end{equation*}
        Furthermore, by the definition \eqref{eq.defDeltaS} and the independence property, we can obtain
		\begin{align*}
			&\E^\omega \big[\vert \Delta S_{t_k}-\Delta\B_{t_k}\vert \big\vert \big(S_{t_i},\B_{t_i}\big)_{0\le i\le k} \big] \\
			&=\sum_{x\in\cltf}\E^\omega \big[\vert \Delta S_{t_k}^{(x)} - \Delta\B^{(x)}_{t_k}\vert\big\vert \big(S_{t_i},\B_{t_i}\big)_{0\le i\le k} \big]\Ind{S_{t_k} = x}  \\
			&=\sum_{x\in\cltf}W_1\big(p^\omega(\Delta T,x+\cdot,x), \bar{p}(\Delta T,\cdot) \big)\Ind{S_{t_k} = x}\\
			&=W_1\big(p^\omega(\Delta T,S_{t_k}+\cdot,S_{t_k}), \bar{p}(\Delta T,\cdot)\big),
		\end{align*}
        which concludes the proof.
	\end{proof}

	\section*{Acknowledgements}
	We thank the anonymous referees for their careful reading of the manuscript and their insightful comments and suggestions, which have significantly improved the quality of this paper. We also thank Yanxin Zhou and Wenhao Zhao for discussions in the early stage of this project. Gu was supported by NSFC (No.12301166). Su was partially supported by NSFC (Nos.12271475 and U23A2064).

	\bibliographystyle{abbrv}
	\bibliography{Ref}

\end{document}